\DeclareMathSymbol{\invques}{\mathord}{operators}{`>}
\DeclareRobustCommand{\tmquestiondown}{%
  \ifmmode\invques\else\textquestiondown\fi
}
\numberwithin{equation}{section}
\newcommand{\mylabel}[2]{#2\def\@currentlabel{#2}\label{#1}}
\newtheorem{theorem}{Theorem}[section]
\newtheorem{lemma}[theorem]{Lemma}
\newtheorem{proposition}[theorem]{Proposition}
\newtheorem{corollary}[theorem]{Corollary}
\newtheorem{defn}[theorem]{Definition}
\newtheorem{remark}[theorem]{Remark}
\newtheorem{remdef}[theorem]{Remark/Definition}
\newcommand{\Gal}{\operatorname{Gal}}
\newcommand{\Fil}{\operatorname{Fil}}
\newcommand{\bD}{\mathbf{D}}
\newcommand{\DD}{\mathbb{D}}
\newcommand{\BB}{\mathbb{B}}
\newcommand{\QQ}{\mathbb{Q}}
\newcommand{\Qp}{\mathbb{Q}_p}
\newcommand{\Zp}{\mathbb{Z}_p}
\newcommand{\ZZ}{\mathbb{Z}}
\newcommand{\FF}{\mathbb{F}}
\newcommand{\FFF}{\mathcal{F}}
\newcommand{\fF}{\mathfrak{F}}
\newcommand{\ord}{\mathrm{ord}}
\newcommand{\fp}{\mathfrak{p}}
\newcommand{\vp}{\varphi}
\newcommand{\cL}{\mathcal{L}}
\newcommand{\cH}{\mathcal{H}}
\newcommand{\cO}{\mathcal{O}}
\newcommand{\HIw}{H^1_{\mathrm{Iw}}}
\newcommand{\GL}{\mathrm{GL}}
\newcommand{\Brig}{\BB_{{\rm rig},F}^+}
\newcommand{\cyc}{\textup{cyc}}
\newcommand{\ac}{\textup{ac}}
\newcommand{\LL}{\Lambda}
\newcommand{\TT}{\mathbb{T}}
\newcommand{\RR}{\mathcal{R}}
\newcommand{\f}{\textup{\bf f}}
\newcommand{\lra}{\longrightarrow}
\newcommand{\ra}{\lra}
\newcommand{\res}{\textup{res}}
\newcommand{\ur}{\textup{ur}}
\newcommand{\fP}{\mathfrak{P}}
\newcommand{\cP}{\mathcal{P}}
\newcommand{\Dcris}{\mathbb{D}_{\rm cris}}
\newcommand{\Tw}{\mathrm{Tw}}
\newcommand{\cor}{\mathrm{cor}}
\newcommand{\red}{\color{red}}
\newcommand{\p}{\mathfrak{p}}
\newcommand{\m}{\mathfrak{m}}
\newcommand{\cD}{\mathcal{D}}
\newcommand{\cG}{\mathcal{G}}
\newcommand{\sA}{\mathscr{A}}
\newcommand{\sF}{\mathscr{F}}
\newcommand{\sL}{\mathscr{L}}
\newcommand{\cE}{\mathcal{E}}
\newcommand{\cR}{\mathcal{R}}
\newcommand{\EXP}{\mathrm{EXP}}
\newcommand{\fN}{\mathfrak{N}}
\newcommand{\Cp}{\mathbb{C}_p}
\newcommand{\eG}{\varepsilon_\cG}
\newcommand{\Bcris}{\mathbb{B}_{\rm cris}}
\newcommand{\fe}{\mathfrak{e}}
\newcommand{\sW}{\mathscr{W}}
\newcommand{\Ig}{\mathrm{Ig}}
\newcommand{\x}{\mathrm{\bf x}}
\newcommand{\fa}{\mathfrak{a}}
\newcommand{\Pic}{\mathrm{Pic}}
\newcommand{\rec}{\mathrm{rec}}
\newcommand{\Aut}{\mathrm{Aut}}
\newcommand{\cA}{\mathcal{A}}
\newcommand{\bz}{\mathbf{z}}
\begin{document}

\title{Interpolation of generalized Heegner cycles in Coleman families}

\begin{abstract}
Kobayashi recently proved that the generalized Heegner cycles of Bertolini--Darmon--Prasanna can be interpolated along the anticyclotomic tower, giving rise to distribution valued cohomology classes with expected growth rate. We interpolate these classes along Coleman families. This construction plays a role in the proofs of $p$-adic Gross--Zagier formulae at for non-ordinary eigenforms and consequentially, also in the proof of a conjecture of Perrin-Riou. 
\end{abstract}

\author{K\^az\i m B\"uy\"ukboduk}
\address{K\^az\i m B\"uy\"ukboduk\newline UCD School of Mathematics and Statistics\\ University College Dublin\\ Ireland}
\email{kazim.buyukboduk@ucd.ie}

\author{Antonio Lei}
\address{Antonio Lei\newline
D\'epartement de Math\'ematiques et de Statistique\\
Universit\'e Laval, Pavillion Alexandre-Vachon\\
1045 Avenue de la M\'edecine\\
Qu\'ebec, QC\\
Canada G1V 0A6}
\email{antonio.lei@mat.ulaval.ca}

\subjclass[2010]{11R23 (primary); 11F11, 11R20 (secondary) }
\keywords{Non-ordinary Iwasawa theory, Rankin-Selberg convolutions, Birch and Swinnerton-Dyer formulas}
\keywords{Generalized Heegner cycles, $p$-adic Families of Modular Forms, Anticyclotomic Iwasawa Theory}

\maketitle
\tableofcontents
\section{Introduction}
Fix forever a prime $p\geq 5$ and an imaginary quadratic field $K$ where $(p)=\fp\fp^c$ splits (the superscript $c$ will always stand for the action of a fixed complex conjugation). We let $g \in S_\kappa(\Gamma_0(N)\cap \Gamma_1(p))$ denote a $p$-stabilized cuspidal eigenform of level $Np$ which is crystalline at $p$ and has even weight $\kappa\ge2$. We assume that every prime dividing the level $N$ splits in $K$. Let us write $g_{/K}$ for the base change of $g$ to $K$ and let $\psi$ denote an anticyclotomic Hecke character of $K$ of infinity type $(j,-j)$ with $-\frac{\kappa}{2}<j<\frac{\kappa}{2}$. Kobayashi showed in \cite{kobayashiGHC} that the generalized Heegner cycles of Bertolini--Darmon--Prasanna~\cite{bertolinidarmonprasanna13} associated to Rankin--Selberg convolutions $g_{/K}\otimes\psi$  can be interpolated as $\psi$ varies among anticyclotomic Hecke characters. The main purpose of the current article is to show that we may interpolate these classes  as $g$ varies in a Coleman family  (Theorem~\ref{thm_main_GHCinterpolate_Intro} below). This extends the work of Howard~\cite{howard2007} in the case of slope-zero (Hida) families.

Motivations for this interpolation result include its consequences towards the $p$-adic Gross--Zagier formula at critical slope, Perrin-Riou's conjecture comparing Beilinson--Kato elements to Heegner points and Birch and Swinnerton--Dyer formulae for elliptic curves of rank one. These applications are  discussed in the preprint~\cite{BPSI}. 

Our approach exploits the $p$-adic construction of rational points, a theme first observed by Rubin~\cite{Rubin92} and further explored by Perrin-Riou~\cite{PR93RubinsFormula}, Bertolini--Darmon~\cite{BertoliniDarmon2007} and Bertolini--Darmon--Prasanna \cite{bertolinidarmonprasanna12, bertolinidarmonprasanna13}. More precisely, our argument to interpolate generalized Heegner cycles dwells on the formula of Bertolini--Darmon--Prasanna, which relates the Bloch--Kato logarithms of {these cycles} to appropriate Rankin--Selberg $p$-adic $L$-values.  We briefly outline our strategy, which consists of three steps:
\begin{itemize}
\item { In Appendix~\ref{sec:biglogalaonganticyclotower}, using the theory of $(\vp,\Gamma)$-modules, we explain how to extend the work of Perrin-Riou  to construct a big exponential map  which interpolates Bloch--Kato exponential maps for $g_{/K}\otimes\psi$ as the newform $g$ varies in a Coleman family and $\psi$ varies among anticyclotomic Hecke characters. }
\item We outline in Section~\ref{subsec_anticyclo_padic_L_in_families} (where we adapt Brako\v{c}evi\'{c}'s work \cite{miljan2} for Hida families) the construction of a two-variable $p$-adic $L$-function (one variable parametrizing the variation of $g$ in a Coleman family, the other variable accounting for  the anticyclotomic variation), which interpolates the Rankin--Selberg $p$-adic $L$-function {of} Brako\v{c}evi\'{c}/Bertolini--Darmon--Prasanna as $g$ varies in a Coleman family. 
\item The $\p$-local candidate for the ``universal'' generalized Heegner cycle is then defined as the image of the big exponential map on the two-variable Brako\v{c}evi\'{c}/Bertolini--Darmon--Prasanna $p$-adic $L$-function. Relying on a $\LL(\widetilde{\Gamma}_\ac)$-adic version of the Bertolini--Darmon--Prasanna formula (Theorem~\ref{thm_GHC_rec_law_g} in the main text; here $\LL(\widetilde{\Gamma}_\ac)$ denotes the anticyclotomic Iwasawa algebra), we prove in 
Section~\ref{subsec_GHC_interpolated} that the $\p$-local cohomology class we construct arises as the restriction of a uniquely determined global cohomology class\footnote{We shall work under the hypothesis \ref{item_Int_PR}, which concerns integrality properties of the Perrin-Riou map given in Appendix~\ref{sec:biglogalaonganticyclotower}. This property is valid for slope-zero (Hida) families; in the general setup, it is a work in progress of Ochiai. Without this hypothesis, we can still prove the existence of families of $\p$-local classes and show that they do arise from  global classes, c.f. Remark~\ref{rem_JLZ}. Under \ref{item_Int_PR}, not only can we prove the existence of the global classes independently of \cite{JLZ}, but we can also show that these classes enjoy certain integrality properties. In Appendix~\ref{appB}, we explain that our method gives rise to global classes as $\psi$ varies over a proper wide open subdisc of the anticyclotomic weight space ${\rm Spm}(\LL(\Gamma_\ac)[1/p])$ without assuming \ref{item_Int_PR}.} which necessarily interpolates the generalized Heegner cycles of Bertolini--Darmon--Prasanna.
\end{itemize}

Before we give  precise statements of our results and discuss related prior and forthcoming works, let us first fix our notation and set the hypotheses we shall work with.

\subsection{Set up}\label{subsec_setup}
We fix once and for all an embedding $\iota_p: \overline{\QQ}\hookrightarrow \mathbb{C}_p$ and suppose that the prime $\fp$ of $K$ lands inside the maximal ideal of $\cO_{\mathbb{C}_p}$. We fix also an embedding $\iota_\infty: \overline{\QQ}\hookrightarrow \mathbb{C}$ and an isomorphism $\frak{j}:\mathbb{C}\stackrel{\sim}{\ra}  \mathbb{C}_p$ such that $\frak{j}\circ \iota_\infty =\iota_p$. The ring of integers of the completion of the maximal unramified extension $\QQ_p^\ur$ of $\Qp$ is denoted by $\sW$. We assume until the end of this article that the strong Heegner hypothesis (relative to the imaginary quadratic field $K$ and integer $N$) holds true. We fix a factorization $N\cO_K=\fN\fN^c$.

Let $f \in S_k(\Gamma_0(N))$ be a normalized cuspidal eigen-newform of level $N$ and even weight $k\ge2$. We assume that the fixed prime $p$ does not divide $ N$. Let $\alpha$ and $\beta$ be the two roots of the Hecke polynomial $X^2-a_p(f)X+p^{k-1}$. We assume that $\alpha\neq \beta$ and let $f^\alpha$ and $f^\beta$ denote the two $p$-stabilizations of $f$. We fix a finite extension $L$ of $\Qp$ that contains the Hecke field of $f$ as well as $\alpha$ and $\beta$. Throughout the article,  we shall write $\lambda$ for either one of the two roots. We assume that neither $f^\alpha$ nor $f^\beta$ is in the image of the operator  on the space of overconvergent modular forms
$$\theta_{k-2}: M^\dagger_{2-k}(\Gamma_0(N)\cap\Gamma_1(p))\lra M^\dagger_{k}(\Gamma_0(N)\cap\Gamma_1(p))$$ 
 given by $\left(q\frac{d}{dq}\right)^{k-1}$ on $q$-expansions.

We write $W_f$ for Deligne's $2$-dimensional representation with coefficients in $L$, whose Hodge--Tate weights are $0$ and $1-k$ (with the convention that the Hodge-Tate weight of the cyclotomic character is $+1$) and write $V_f:=W_f(k/2)$ for its central critical twist. We fix a Galois-stable $\cO_L$-lattice $T_f$ inside $V_f$ {given as in \S\ref{subsec_duality_deligne_poincare} below}.  We define and notate similar objects associated to a $p$-stabilized cuspidal eigenform $g\in S_\kappa(\Gamma_0(N)\cap\Gamma_1(p))$ of even weight $\kappa\geq 2$.

For a positive integer $c$, we denote by $K_c$ the ring class extension of $K$ modulo $c$. 
Let $\Sigma$ denote a {finite} set of places of $\QQ$, which contains all primes dividing $Np$ as well as the archimedean place. For any extension $F/\QQ$, we shall abuse notation to denote the set of places of $F$ lying above $\Sigma$  with the same symbol $\Sigma$. We write $F_\Sigma$ for the maximal extension of $F$ unramified outside $\Sigma$ and define $G_{F,\Sigma}:=\Gal(F_{\Sigma}/F)$.
\subsubsection{Distributions}
Let $K_{p^\infty}$ denote the ring class field of $K$ of conductor $p^\infty$. We write $\widetilde{\Gamma}_\ac:=\Gal(K_{p^\infty}/K)=\Gamma_\ac\times\Delta_\ac$, where $\Gamma_\ac\cong\Zp$ and $\Delta_\ac$ is a finite group. We shall write $\Lambda(\Gamma_\ac):=\ZZ_p[[{\Gamma}_\ac]]$ and $\Lambda(\widetilde{\Gamma}_\ac):=\ZZ_p[[\widetilde{\Gamma}_\ac]]$ for the completed group rings of ${\Gamma}_\ac$ and $\widetilde{\Gamma}_\ac$, respectively.

Fix $h\in\mathbb{R}\ge0$. { As in \cite[­\S I]{colmez98},} we define a valuation on the power series ring $\Qp[[X]]$ given by
\[
v_h\left(\sum_nc_nX^n\right)=\inf\left\{\ord_p(c_n)+h\ell(n):n\ge0\right\},
\]
where $\ell(0)=0$ and $\ell(n)= \frac{\log(n)}{\log(p)}+1$ for $n\ge1$. We may then define the set of \emph{power series of logarithmic order $h$} by
\[
\cH_h=\{F\in \Qp[[X]]:v_h(F)>-\infty\},
\]
and the subset of power series which are integral with respect to $v_h$ by
\[
\cH_h^+=\{F\in \cH_h:v_h(F)\ge0\}.
\]
 On choosing a topological generator $\gamma_\ac$ of ${\Gamma}_\ac$ and replacing $X$ by $\gamma_\ac-1$ in the definitions above, we define the subsets $\cH_h^+(\Gamma_\ac)$ and $\cH_h(\Gamma_\ac)$ of $\QQ_p[[{\gamma_\ac-1}]]$. We set $\cH_h(\widetilde{\Gamma}_\ac):=\cH_h(\Gamma_\ac){\otimes}_{\LL(\Gamma_\ac)}\LL(\widetilde{\Gamma}_\ac)$ and similarly define $\cH_h^+(\widetilde{\Gamma}_\ac)$. We note that $\cH_0(\widetilde{\Gamma}_\ac)=\LL(\widetilde{\Gamma}_\ac)\otimes_{\ZZ_p}\QQ_p$. Finally, if $W$ is a ring that contains $\Zp$, we set $\cH_h(\Gamma_\ac)_{W}:=\cH_h(\Gamma_\ac)\otimes_{\ZZ_p}{W}$ and similarly define $\cH_h^+(\Gamma_\ac)_{W}$.
\subsubsection{Coleman families}
\label{subsec_Coleman_families_revisited}
Let $\mathcal{W}={\rm Sp}\,\ZZ_p[[\Zp^\times]]$ denote the weight space. For each  $r=p^{v}<p^{\frac{p-1}{p-2}}$ with $v=\ord_p(e)$ for some $e\in L$, we let $B(r)\subset \mathcal{W}$ denote the closed affinoid disc about $k$ of radius $r$. Let $\mathscr{A}(r)$ denote the ring of $L$-valued analytic functions on $B(r)$ and let $\sA^\circ(r)\subset \mathscr{A}(r)$ be the subring of power-bounded elements. Both rings $\mathscr{A}(r)$ and $\mathscr{A}^\circ(r)$ are Noetherian for each $r$. We also consider the open disc $B^\circ(r)\subset B(r)$ of radius $r$ about $k$, which we think of as an $L$-rigid analytic space (see \cite[\S7]{deJong95} for a detailed description of the rigid analytic open ball). This inclusion induces an injective ring homomorphism
$$\sA^\circ(r)\lra \LL_{(k,r)}:=\cO_L\left[\left[\frac{X-k}{e}\right]\right],$$
where we think of $\LL_{(k,r)}$ as functions on $B^\circ(r)$ which are bounded by $1$; and where the image of $\sA^\circ(r)$ is given by $$\left\{\sum_{n=0}^\infty c_n\left(\frac{X-k}{e}\right)^n\in \cO_L\left[\left[\frac{X-k}{e}\right]\right]:\lim_{n\rightarrow\infty}c_n=0\right\}.$$ This map also induces a ring homomorphism 
$$\sA(r)\lra \LL_{(k,r)}[1/p]\,.$$

Fix $r_0= p^{v_0}$, with $v_0:=\ord_p(e_0)$ for some $e_0\in L$. Let $\f$ be a Coleman family over the affinoid disc $B(r_0)$ of fixed slope $v(\lambda):=\ord_p(\lambda)$ (which we can always ensure on shrinking $B(r_0)$ appropriately), through the $p$-stabilization $f^\lambda$ of $f$. The Coleman family $\f$ admits a formal $q$-expansion $\f=\sum_{n=1}^\infty A_nq^n$ with $A_n\in \LL_{(k,r_0)}$  thanks to the work of Coleman \cite{coleman97}. Let $\bblambda\in \LL_{(k,r_0)}$ denote the eigenvalue with which $U_p$ acts on $\f$. As explained in {Remark~\ref{remark_abstract_vs_physical} below (see also Remark~\ref{rem_appendix_explain_C_again})}, there exists a free $\LL_{(k,r_0)}$-module $\TT_\f$ of rank two equipped with a continuous  $G_{\QQ,\Sigma}$-action, interpolating the self-dual twists of the Galois representations of classical specializations of $\f$. It is self-dual in the sense that we have an isomorphism 
\begin{equation}
\label{eqn_symplectic_Coleman}
\TT_\f\stackrel{\sim}{\lra}{\rm Hom}_{\LL_{(k,r_0)}}(\TT_\f,\LL_{(k,r_0)})(1).
\end{equation}
of $G_{\QQ,\Sigma}$-representations. 

We set $\TT_\f^\ac:=\TT_{\f}\,\widehat{\otimes}_{\ZZ_p}\ZZ_p[[\widetilde{\Gamma}_\ac]]^\iota$, which is a free $\LL_{(k,r_0)}\widehat{\otimes}\ZZ_p[[\widetilde{\Gamma}_\ac]]$-module of rank two, on which we allow $G_{\QQ,\Sigma}$ act diagonally and $\ZZ_p[[\widetilde{\Gamma}_\ac]]^\iota$ denotes $\ZZ_p[[\widetilde{\Gamma}_\ac]]$, where the action of $\gamma\in\widetilde{\Gamma}_\ac$ is given by multiplication by $\gamma^{-1}$. 

 We define $B^\circ(r_0)_{\rm cl}$ to be the set of  classical points in the disc $B^\circ(r_0)$, so that we have for $\kappa \in B^\circ(r_0)_{\rm cl}$
$$\f(\kappa):=\sum_{n=1}^\infty A_n(\kappa)q^n\in S_\kappa(\Gamma_0(N)\cap\Gamma_1(p)).$$ 
We call $\f(\kappa)$ the weight-$\kappa$ specialization of $\f$. {On shrinking $B(r_0)$ if necessary, one can ensure that} the $p$-stabilized eigenform $\f(\kappa)=\sum_{n=1}^\infty A_n(\kappa)q^n$ is $p$-old, and it arises as the $p$-stabilization of a newform $\f(\kappa)^\circ \in S_\kappa(\Gamma_0(N))$ with respect to the eigenvalue $\bblambda(\kappa)$. There is a corresponding ring homomorphism 
$$\pi_\kappa: \LL_{(k,r)} \lra \cO_L$$
which induces morphisms of $G_{\QQ,\Sigma}$-modules
$$\pi_\kappa:\TT_\f^\ac\lra T_{\f(\kappa)}^\ac\,\,\,\,\,\,\,\,,\,\,\,\,\,\, \pi_\kappa:\TT_\f\lra T_{\f(\kappa)}$$
where $T_{\f(\kappa)}$ is an $\cO_L$-lattice in the central critical twist of Deligne's representation attached to $\f(\kappa)^\circ$. For each $\kappa$ as above, let us fix a generator $P_\kappa\in \LL_{(k,r_0)}$ of $\ker(\pi_\kappa)$.

{ The following hypotheses will be in effect for the main results of our article.
\begin{itemize}
\item[\mylabel{item_BI}{\bf(BI)}] $\{M\in \GL_2(\Zp):\det(M)\in (\Zp^\times)^{k-1}\}\subset {\rm im}\left(G_\QQ\stackrel{\rho_f}{\longrightarrow} \Aut_{\cO_L}(T_f)\cong \GL_2(\cO_L)\right)$;
\item[\mylabel{item_irr}{\bf(Irr)}] { If $v(\lambda)>0$, then} the residual representation of $\rho_\f$ when restricted to $G_{\Qp}$ is absolutely irreducible. 
\end{itemize}
}
\begin{remark}
\label{remark_regularity_mod_p_rep}
\item[i)] Suppose that the hypothesis {\ref{item_BI}} holds. Then the residual representation $\overline{\rho}_f$ is full, in the sense that ${\rm im}(\overline{\rho}_f)$ contains a conjugate of ${\rm SL}_2(\FF_p)$. This implies that the projective image of $\overline{\rho}_f$ contains ${\rm PSL}_2(\FF_p)$ up to conjugation. As explained in \cite[Remark 2.28]{ContiLangMedvedovsky}, it follows that the regularity condition in \cite[Theorem 6.2]{ContiIT2016} for $\overline{\rho}_f$, as well as \cite[Corollary 7.2]{ContiIT2016} for its deformation to a Coleman family hold true. 
\item[ii)]{ The hypothesis \ref{item_irr} holds true very often. For example, it holds whenever $\f$ admits a crystalline specialization of weight $k$ where $2\leq k\leq p+1$;  c.f. \cite[Theorem~2.6]{FontaineEdixhoven92}. We refer the reader to \cite[Th\'eor\`eme 3.2.1]{Berger2010} for further sufficient conditions.
}
\end{remark}

\subsection{Results} We are now ready to record our main result (the interpolation of generalized Heegner cycles of Bertolini--Darmon--Prasanna in Coleman families).
\begin{theorem}
\label{thm_main_GHCinterpolate_Intro} Suppose $c_0$ is a positive integer prime to $pN$. For each $\kappa\in B^\circ(r_0)_{\rm cl}$, we let 
$${\frak{z}}_{\f(\kappa),c_0}^\ac\in H^1(G_{K_{c_0},\Sigma},T_{\f(\kappa)}^\ac)\,\widehat{\otimes}_{ \LL(\widetilde{\Gamma}_\ac)}\,p^{c(\lambda)}\mathcal{H}_{v(\lambda)}^+(\widetilde{\Gamma}_\ac)$$ denote the $\Lambda(\widetilde{\Gamma}_\ac)$-adic cycle, interpolating the generalized Heegner cycles of Bertolini--Darmon--Prasanna along the anticyclotomic tower, given as in {Definition~\ref{defn_pstabilizedGHC_levelNp} $($see also Theorem~\ref{thm_kobayashi_ota_ac_interpolation}$)$.}

{ There exists a class }
$${{\frak{z}}_{\f,c_0}^\ac}\in H^1(G_{K_{c_0},\Sigma},\TT_{\f}^\ac{ [1/p]})\,\widehat{\otimes}_{ \mathcal{H}_{0}(\widetilde{\Gamma}_\ac)}\,\mathcal{H}_{v(\lambda)}(\widetilde{\Gamma}_\ac)
$$ 
such that 
${{\frak{z}}_{\f,c_0}^\ac}(\kappa)=
{\frak{z}}_{\f(\kappa),c_0}^\ac$ for every $\kappa\in B^\circ(r_0)_{\rm cl}$.  If moreover \ref{item_Int_PR} holds, %\footnote{ The condition \ref{item_Int_PR} holds when $v(\lambda)=0$. In the general set up, it is work in progress of Ochiai.}
then there exists an integer $c(\lambda)$ that depends only on $v(\lambda)$ such that
$${{\frak{z}}_{\f,c_0}^\ac}\in H^1(G_{K_{c_0},\Sigma},\TT_{\f}^\ac)\,\widehat{\otimes}_{\Zp}\,p^{c(\lambda)}\mathcal{H}_{v(\lambda)}^+(\widetilde{\Gamma}_\ac)\,.
$$
\end{theorem}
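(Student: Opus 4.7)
The plan is to execute the three-step strategy sketched in the introduction. First, adapting Brakočević's Hida-family construction \cite{miljan2} to the Coleman setting (carried out in Section~\ref{subsec_anticyclo_padic_L_in_families}), I would build a two-variable Bertolini--Darmon--Prasanna $p$-adic $L$-function
$$\mathbf{L}_p^{\BDP}(\f)\in\mathcal{H}_{v(\lambda)}(\widetilde\Gamma_\ac)\widehat\otimes_{\Zp}\LL_{(k,r_0)}[1/p]$$
whose specialization at each $\kappa\in B^\circ(r_0)_{\rm cl}$ recovers the anticyclotomic Brakočević/BDP $p$-adic $L$-function attached to $\f(\kappa)_{/K}$.

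Next, I would apply the two-variable Perrin-Riou big exponential map $\mathrm{EXP}_\f$ constructed via $(\varphi,\Gamma)$-modules in Appendix~\ref{sec:biglogalaonganticyclotower} to $\mathbf{L}_p^{\BDP}(\f)$, producing a $\p$-local class
$$\mathbf{z}_{\f}^{\ac,\p}:=\mathrm{EXP}_\f\bigl(\mathbf{L}_p^{\BDP}(\f)\bigr)$$
taking values in the expected distribution-valued $H^1$ of the decomposition group at $\p$. Because $\mathrm{EXP}_\f$ interpolates the classical Bloch--Kato exponentials along both the weight and anticyclotomic directions, combining its interpolation property with the $\LL(\widetilde\Gamma_\ac)$-adic BDP reciprocity law (Theorem~\ref{thm_GHC_rec_law_g}) yields
$$\pi_\kappa\bigl(\mathbf{z}_{\f}^{\ac,\p}\bigr)=\mathrm{loc}_\p\bigl({\frak{z}}_{\f(\kappa),c_0}^\ac\bigr)\qquad\text{for every }\kappa\in B^\circ(r_0)_{\rm cl}.$$

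The third step is globalization. Since the $\p$-localizations of the Kobayashi--Ota cycles are cut out from the single class $\mathbf{z}_{\f}^{\ac,\p}$ varying analytically in the family, the natural candidate for ${{\frak{z}}_{\f,c_0}^\ac}$ is the unique element of $H^1(G_{K_{c_0},\Sigma},\TT_\f^\ac[1/p])\widehat\otimes_{\mathcal{H}_0(\widetilde\Gamma_\ac)}\mathcal{H}_{v(\lambda)}(\widetilde\Gamma_\ac)$ whose specialization at each classical $\kappa$ recovers ${\frak{z}}_{\f(\kappa),c_0}^\ac$. Its existence and uniqueness rest on a cohomological control theorem for the universal module $\TT_\f^\ac$ together with injectivity of the $\p$-localization on the relevant Greenberg-type submodule; both are enabled by the big-image hypothesis \ref{item_BI} and the absolute irreducibility \ref{item_irr}, via Remark~\ref{remark_regularity_mod_p_rep}. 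Finally, under \ref{item_Int_PR}, the map $\mathrm{EXP}_\f$ sends integral distributions to cohomology classes with denominators bounded by a fixed $p^{c(\lambda)}$ depending only on the slope; since $\mathbf{L}_p^{\BDP}(\f)$ is itself integral up to a controlled factor, this promotes the previous class to the refined integrality statement ${{\frak{z}}_{\f,c_0}^\ac}\in H^1(G_{K_{c_0},\Sigma},\TT_\f^\ac)\widehat\otimes_{\Zp}p^{c(\lambda)}\mathcal{H}_{v(\lambda)}^+(\widetilde\Gamma_\ac)$.

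The main obstacle I anticipate is the globalization step: converting pointwise compatibilities at a Zariski-dense set of classical points into an identity in the rigid-analytic universal cohomology module requires both the $\LL(\widetilde\Gamma_\ac)$-adic BDP reciprocity (so that the matching problem reduces to the local claim already established) and a fine control/freeness argument to exclude obstructions to lifting that might arise along the family. The integrality refinement compounds this by requiring explicit control over the denominators introduced by the Perrin-Riou machinery of Appendix~\ref{sec:biglogalaonganticyclotower}, which is precisely the content of hypothesis \ref{item_Int_PR}.
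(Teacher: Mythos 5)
Your three-step strategy (construct the two-variable Brako\v{c}evi\'{c}/BDP $p$-adic $L$-function, push it through the $(\vp,\Gamma)$-module Perrin-Riou map to get the $\p$-local class, then globalize) is precisely the plan the paper executes. However, your globalization step is under-justified at two points, and the citations you give for it are not the ones that actually carry the argument.

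First, proving that the $\p$-local class $\mathbf{z}_\f^{\ac,\p}$ lifts to a global class is not a consequence of a ``cohomological control theorem'' in the way you suggest. The paper's argument (Theorem~\ref{thm_main_GHCinterpolate}(ii)) is a density argument: one shows that the image of the local class under the coboundary $\delta$ into $\widetilde{H}^2(G_{K_{c_0},\Sigma},\TT_\f^\ac;\Delta(\p,\p^c))$ specializes to zero at every classical $\kappa$, hence lies in $\bigcap_\kappa P_\kappa\cdot\widetilde{H}^2\otimes p^{c(\lambda)}\cH^+_{v(\lambda)}(\widetilde\Gamma_\ac)_\sW$, which vanishes by Corollary~\ref{cor_lemma_amice_velu_in_families}. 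Crucially, this density argument requires working with a finitely generated module over the \emph{integral} Iwasawa algebra, and that is exactly where \ref{item_Int_PR} enters: it guarantees that the local class lives in integral cohomology. Without \ref{item_Int_PR} this diagram chase does not go through, and the paper instead imports the reciprocity law of Jetchev--Loeffler--Zerbes (Remark~\ref{rem_JLZ}) to conclude the existence of the global class in the rational setting (or alternatively restricts to wide open discs in the anticyclotomic weight space as in Appendix~\ref{appB}). You would need one of these two routes.

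Second, the uniqueness step hinges on the vanishing $\widetilde{H}^1(G_{K_{c_0},\Sigma},\TT_\f^\ac;\Delta(\p,\p^c))=0$, and attributing this to \ref{item_BI} and \ref{item_irr} ``via Remark~\ref{remark_regularity_mod_p_rep}'' is not accurate. That remark concerns regularity of the residual big-image representation; it is not what forces this vanishing. In the paper, the vanishing is obtained by a Mazur--Rubin core-rank computation (using \ref{item_BI} to ensure the required Selmer structure properties) together with Hsieh's nonvanishing theorem for anticyclotomic BDP $p$-adic $L$-values and Kobayashi's main theorem, applied to a Zariski-dense set of classical specializations. Any proof along your lines will need these nonvanishing inputs; without them the injectivity of $\res_\p$ on the relevant Selmer group cannot be established.
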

This statement corresponds to Theorem~\ref{thm_main_GHC_in_families_with_ac}(i) in the main body of our article, combined with Proposition~\ref{prop_main_GHC_in_families_with_ac_descended_to_L}  together with the discussion in Remark~\ref{rem_JLZ}. Theorem~\ref{thm_main_GHCinterpolate_Intro} may be recast in terms of classical Heegner cycles:
\begin{corollary}
\label{cor_compareGHCtononpstabilizedHeegCycle_intro}
We let $\frak{z}_{\f,c_0}\in H^1(K_{c_0},\TT_\f)\otimes\QQ_p$ denote the image of the class $\frak{z}_{\f,c_0}^{\ac}$ under the natural projection
$$ H^1(K_{c_0},\TT_{\f}^\ac)\widehat{\otimes}_{ \LL(\widetilde{\Gamma}_\ac)} \mathcal{H}_{v(\lambda)}(\widetilde{\Gamma}_\ac)%_{\sW}
\lra  H^1(K_{c_0},\TT_{\f})\otimes \Qp%{\otimes}{\sW[1/p]}
\,.$$ 
\item[i)] For any $r<r_0$, let us define $\TT_{\f}\vert_{B(r)}:=\TT_{\f}\otimes_{\LL_{(k,r_0)}} \mathscr{A}(r)$, which is a  free $\mathscr{A}(r)$-module of rank $2$. We have 
$$\frak{z}_{\f,c_0}\in \widetilde{H}^1 (G_{K_{c_0},\Sigma},\TT_{\f}\vert_{B(r)};\Delta_{\bblambda}),
$$
where $\widetilde{H}^1 (G_{K_{c_0},\Sigma},\TT_{\f}\vert_{B(r)};\Delta_{\bblambda})$ is the Pottharst Selmer group interpolating Bloch--Kato Selmer families along $B(r)$ $($see Definition~\ref{defn_pottharst_selmer_on_B_r} for its precise definition$)$.
\item[ii)] For any crystalline classical point $\kappa\in B(r)$ with $r<r_0$, we let ${z}_{\f(\kappa)^\circ,c_0} \in H^1_{\rm f}(K_{c_0},V_{\f(\kappa)})$ denote the classical Heegner class $($given as in \cite[\S 3]{NekovarGZ}$)$ of conductor $c_0$. {We put ${z}_{\f(\kappa)^\circ}:={\rm cor}_{K_1/K}{z}_{\f(\kappa)^\circ,1}$. Then for all crystalline classical points $\kappa\in B(r)$, we have
 $${\frak{z}}_{\f}\vert_{B(r)}(\kappa)=\left(1-\dfrac{p^{\frac{\kappa}{2}-1}}{\bblambda(\kappa)}\right)^2\cdot\dfrac{u_K^{-1}}{(2\sqrt{-D_K})^{\frac{\kappa}{2}-1}}\cdot\dfrac{W_{Np}\circ({\rm pr}^{\bblambda(\kappa)})^*(z_{\f(\kappa)^\circ})}{\bblambda(\kappa)\,\lambda_N(\f(\kappa)^\circ)\cE(\f(\kappa))\cE^*(\f(\kappa))}\,.$$
 Here, $W_{Np}$ is the Atkin--Lehner operator $($which we have normalized as in \cite[\S2.5]{KLZ2}$)$, $({\rm pr}^{\bblambda(\kappa)})^*$ is a certain linear combination of the degeneracy maps given as in Section~\ref{subsec_duality_deligne_poincare}, $\lambda_N(\f(\kappa)^\circ)$ is the Atkin--Lehner psuedo-eigenvalue, the multipliers $\cE(\f(\kappa))$ and $\cE^*(\f(\kappa))$ are defined as in Lemma~\ref{lemma_KLZ_2_Prop_10_1_1_last_par}.}
\end{corollary}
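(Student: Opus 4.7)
My plan is to deduce the corollary from Theorem~\ref{thm_main_GHCinterpolate_Intro} by specializing the anticyclotomic variable to the trivial character of $\widetilde{\Gamma}_\ac$ and analyzing the resulting class one classical weight at a time. The class $\frak{z}_{\f,c_0}$ is, by definition, the image of $\frak{z}_{\f,c_0}^\ac$ under the natural projection, so everything reduces to understanding the ``$\mathbf{1}_{\widetilde{\Gamma}_\ac}$-slice'' of the $\Lambda(\widetilde{\Gamma}_\ac)$-adic class produced in the main theorem.

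For part (i), I would observe that the Pottharst Selmer group $\widetilde{H}^1(G_{K_{c_0},\Sigma},\TT_\f\vert_{B(r)};\Delta_{\bblambda})$ (Definition~\ref{defn_pottharst_selmer_on_B_r}) is carved out by a local condition at the primes above $p$ that is rigid-analytic in the weight variable and compatible with specialization at classical crystalline points: at such a $\kappa$ it recovers $H^1_{\rm f}(K_{c_0},V_{\f(\kappa)})$. Because classical crystalline points of slope $v(\bblambda)$ are Zariski-dense in $B(r)$ and the relevant local triangulations vary analytically with $\kappa$, it suffices to check the local condition at a dense set of classical $\kappa$. But at any such $\kappa$, the interpolation property of Theorem~\ref{thm_main_GHCinterpolate_Intro} combined with the explicit interpolation of Bertolini--Darmon--Prasanna classes at the trivial anticyclotomic character (Theorem~\ref{thm_kobayashi_ota_ac_interpolation}) identifies $\frak{z}_{\f,c_0}(\kappa)$ with a nonzero scalar multiple of a classical Heegner-type class, which is well known to be crystalline.

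For part (ii), I would proceed in three steps. First, the interpolation property of Theorem~\ref{thm_main_GHCinterpolate_Intro} identifies $\frak{z}_{\f,c_0}(\kappa)$ with the trivial-character specialization of $\frak{z}_{\f(\kappa),c_0}^\ac$. Second, by Definition~\ref{defn_pstabilizedGHC_levelNp} together with the anticyclotomic interpolation formula of Kobayashi (Theorem~\ref{thm_kobayashi_ota_ac_interpolation}), this specialization equals the image of the generalized Heegner cycle attached to the $p$-stabilized eigenform $\f(\kappa)$, multiplied by the Euler factor at the trivial character in the anticyclotomic $\LL$-adic construction; since $p$ splits as $\fp\fp^c$ in $K$ this factor is exactly $(1-p^{\kappa/2-1}/\bblambda(\kappa))^2$, and the Perrin-Riou big-exponential construction used to build $\frak{z}_{\f,c_0}^\ac$ contributes the multipliers $\cE(\f(\kappa))\cE^*(\f(\kappa))$ of Lemma~\ref{lemma_KLZ_2_Prop_10_1_1_last_par}. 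Third, I would ``de-stabilize'': express the generalized Heegner cycle for $\f(\kappa)$ in terms of the classical Heegner cycle $z_{\f(\kappa)^\circ,c_0}$ for the newform $\f(\kappa)^\circ$, using the linear combination of degeneracy maps $({\rm pr}^{\bblambda(\kappa)})^*$ introduced in \S\ref{subsec_duality_deligne_poincare}, the Atkin--Lehner involution $W_{Np}$ (which swaps the roles of $\fp$ and $\fp^c$ and accounts for the pseudo-eigenvalue $\lambda_N(\f(\kappa)^\circ)$), and the CM-period normalization factor $u_K^{-1}(2\sqrt{-D_K})^{1-\kappa/2}$ built into the Bertolini--Darmon--Prasanna choice of canonical differential.

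The main obstacle is the third step: the bookkeeping required to reconcile the various normalizations (CM period and differential of Bertolini--Darmon--Prasanna/Kobayashi, the Atkin--Lehner conventions of \cite{KLZ2}, the self-dual twist built into $\TT_\f$, and the unique choice of sign in the identification \eqref{eqn_symplectic_Coleman}) so that the scalar constants in the displayed formula come out exactly as stated, rather than up to some ambiguous universal constant. Once this is carried out at a single classical $\kappa$, the formula at every crystalline classical point in $B(r)$ follows because both sides are continuous functions of $\kappa$ that agree at a single weight with a well-understood interpolation property in the anticyclotomic direction.
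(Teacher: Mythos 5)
Your high-level strategy---specialize the $\Lambda(\widetilde\Gamma_\ac)$-adic class at the trivial anticyclotomic character, check the local condition at dense classical points for (i), and de-stabilize against the classical Heegner cycle for (ii)---is broadly the route the paper takes, but there are two genuine gaps and one misattribution that would prevent this sketch from turning into a proof.

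For part (i), the assertion that ``it suffices to check the local condition at a dense set of classical $\kappa$'' is not automatic and is the main content of the paper's argument. The paper reduces membership in the Pottharst Selmer group to showing $\langle\EXP^*(\frak{z}_{\f,c_0}\vert_{B(r)}),\omega_\f\rangle=0$, which does follow from density of classical points; but one then needs the map $\langle\EXP^*(-),\omega_\f\rangle$ to be \emph{injective}. The paper devotes the second half of the proof of Theorem~\ref{thm_main_GHC_in_families_with_ac}(ii) to precisely this: it shows the $\sA$-module $H^1(\QQ_p,\sF^-\DD(\TT_\f\vert_{B(r)}\otimes\chi))$ is flat of rank one, via an analysis of the Fontaine--Herr complex, the Euler characteristic formula of \cite{KPX}, a Tor-vanishing computation, and a shrinking of $r$ to kill $H^0$ at every maximal ideal. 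Without this injectivity, ``vanishing of a dual exponential at dense classical points'' does not propagate to the whole disc. This step is missing from your proposal.

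For part (ii), two issues. First, you attribute the multipliers $\cE(\f(\kappa))\cE^*(\f(\kappa))$ to the Perrin-Riou big exponential; in fact they come from the comparison of Poincar\'e dualities at levels $N$ and $Np$ via degeneracy maps and the Atkin--Lehner involution, as in the final paragraph of \cite[Proposition~10.1.1]{KLZ2} (this is exactly Lemma~\ref{lemma_KLZ_2_Prop_10_1_1_last_par}, and it feeds into Lemma~\ref{lemma_characterize_pstab}(iii)). The factor $(1-p^{\kappa/2-1}/\bblambda(\kappa))^2$, on the other hand, is the $p$-stabilization factor $\cE_p(g^\lambda,c_0)$ at conductor $c_0$ from Definition~\ref{def_various_Heeg_j_stabilized}(iv), not an Euler factor coming from the big exponential either. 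Second, your concluding ``continuity in $\kappa$'' argument is not available: the right-hand side of the displayed formula involves the classical Heegner class $z_{\f(\kappa)^\circ}$, which is defined point-by-point and carries no \emph{a priori} analytic dependence on $\kappa$. The paper instead establishes the formula at \emph{every} crystalline classical $\kappa$ directly by combining the interpolation property $\frak{z}_{\f,c_0}^\ac(\kappa)=\frak{z}_{\f(\kappa),c_0}^\ac$ of Theorem~\ref{thm_main_GHC_in_families_with_ac}(i) with Lemma~\ref{lemma_characterize_pstab}(iii), rather than arguing by density from one point.
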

Corollary~\ref{cor_compareGHCtononpstabilizedHeegCycle_intro}(i) corresponds to Theorem~\ref{thm_main_GHC_in_families_with_ac}(ii) in the main body of our article, combined with Proposition~\ref{prop_main_GHC_in_families_with_ac_descended_to_L}. Corollary~\ref{cor_compareGHCtononpstabilizedHeegCycle_intro}(ii) follows from combining Theorem~\ref{thm_main_GHC_in_families_with_ac}(i) and Proposition~\ref{prop_compareGHCtononpstabilizedHeegCycle} below. 

\begin{remark}
\label{remark_abstract_vs_physical}

\item[i)]
We follow here the discussion in \cite[\S4]{LZ1} and strictly follow the notation in op. cit., {except for the fact that the specialization of the weight variable $\kappa_U$ is the weight of the corresponding overconvergent modular form in our article, whereas in op. cit., the specialization of $\kappa_U$ is the weight of the corresponding overconvergent modular form minus $2$}.  We note that in the notation of \cite{LZ1}, $U:=B^\circ(r)$ {(once we translate $U$ in op. cit. via ``$x\mapsto x+2$'' in the weight space, as we have indicated above)}, $\LL_U:=\LL_{(k,r_0)}$ and $B_U:=\LL_{(k,r_0)}[1/p]$. One constructs a free $B_U$-module $M_U(\f)$ of rank two in the overconvergent \'etale cohomology $($of Andreatta--Iovita--Stevens$)$ of the modular curves $($see Theorem 4.6.6 in op.cit.$)$, which interpolates Deligne's representations associated to classical points in the disc $U$. Moreover, one may consider a $\LL_U$-lattice $M_U^\circ(\f)^*\subset M_U(\f)^*$, which is given as the reflexive hull of the image of $M_{U,0}^\circ(\mathscr{H}_0^\prime)$  in  $M_U(\f)^*$ $($see Definition 4.4.6 in op. cit. for the definition of $M_{U,0}^\circ(\mathscr{H}_0^\prime)$$)$. The $\LL_U$-module $M_U^\circ(\f)^*$ is free of rank $2$. {This is because all reflexive modules over the ring $\LL_U$ are free since $\LL_U$ has Krull dimension $2$, and duals of finitely generated modules over a commutative normal Noetherian domain are reflexive; see \cite[\href{https://stacks.math.columbia.edu/tag/0AUY}{Section 0AUY}]{stacks-project} for a detailed discussion on reflexive modules.}  The $\LL_U$-module $M_U^\circ(\f)^*$ interpolates {homothetic copies of the} canonical lattices in Deligne's representations which are realized in the cohomology of modular curves.  
\item[ii)]
{In our main results, we take $\TT_\f$ to be the $\LL_{(k,r_0)}$-adic lattice ${p^{-\mathscr{C}}}M_U^\circ(\f)^*(1-\kappa_U/2)$ {for a suitably large integer $\mathscr{C}$ and small $r_0$; see Remark~\ref{rem_appendix_explain_C_again} how we choose an integer $\mathscr{C}$ and radius $r_0$ that are suitable for our purposes. We note once again that the universal weight variable $\kappa_U$ here is the universal weight variable in \cite{LZ1} plus 2.}} 
\item[iii)] {Even though it is not relevant to our arguments in this article, we remark that the symplectic structures on Deligne's representations induced from the Poincar\'e duality do not necessarily interpolate integrally, to yield the isomorphism \eqref{eqn_symplectic_Coleman}. It does interpolate once we invert $p$ $($see \cite[Theorem 4.6.6]{LZ1}$)$: There is a perfect Poincar\'e duality  
\begin{equation}\label{eqn_Poincare_in_families}
\{\,,\,\}: \, M_U(\f) \otimes M_U(\f)^* \lra B_U,\end{equation}
where $\kappa_U$ is the universal weight character. However, the pairing \eqref{eqn_Poincare_in_families} does \emph{not} restrict to a perfect pairing on the $\LL_U$-lattices whose construction we have outlined in $($i$)$.}
\end{remark}

\subsection{Related prior and forthcoming work}
In the case of slope-zero (Hida) families, Theorem~\ref{thm_main_GHCinterpolate_Intro} and Corollary~\ref{cor_compareGHCtononpstabilizedHeegCycle_intro} follow from combining the results of \cite{howard2007, CastellapadicvariationofHeegnerpoints, CastellaHsiehGHC}{; we note that these results were extended by Ota in~\cite{ota2020} relying on global methods}. A more general version of these results (in that it is not necessary to assume that the prime $p$ splits in $K$, and that the strong Heegner hypotheses can be relaxed to generalized Heegner hypothesis) is due to Disegni~\cite{disegniuniversalHeegcycle}.

In the positive slope case, Jetchev--Loeffler--Zerbes released a preprint~\cite{JLZ} shortly before the current article was made publicly available, where they utilize the techniques of \cite{LZ1} (which in turn dwells on the overconvergent \'etale cohomology of Andreatta--Iovita--Stevens). Their approach is very different from ours and does not require the prime $p$ be split in $K$  nor the integrality property \ref{item_Int_PR} of the Perrin-Riou maps. We {meticulously keep track of} the integrality properties, dwelling crucially on \ref{item_Int_PR}, of the universal generalized Heegner cycles, which is important for potential applications towards main conjectures for families. Moreover, the  description of the $p$-local properties of the interpolated classes in Corollary~\ref{cor_compareGHCtononpstabilizedHeegCycle_intro}(i) plays a crucial role in \cite{BPSI}. The precise relation of the universal generalized Heegner cycles to classical Heegner cycles in Corollary~\ref{cor_compareGHCtononpstabilizedHeegCycle_intro}(ii) is a generalization of work of Castella~\cite{CastellapadicvariationofHeegnerpoints} and Ota~\cite{ota2020} to the positive-slope case, and it is another key ingredient in \cite{BPSI}.

We note that there is yet a third independent approach to interpolate Heegner Cycles in positive-slope families: Based on a strategy similar to one employed in \cite{howard2007,disegniuniversalHeegcycle}, the forthcoming work~\cite{BPSII} of Pollack, Sasaki and the first named author gives another construction of universal Heegner cycles. The main technical input in op. cit. is Emerton's completed cohomology theory; more particularly, his realization of the eigencurve within the completed cohomology for $\GL2_{/\QQ}$.

\subsection*{Acknowledgments}The authors would like to thank Ming-Lun Hsieh, Zheng Liu and Filippo Nuccio for answering their questions during the preparation of this article. Further, they thank David Loeffler heartily for his comments and suggestions. They also thank the anonymous referee for very helpful and constructive comments and suggestions on earlier versions of the article, which led to many improvements.  The first named author thanks the Department of Mathematics at Harvard University for their hospitality, where the approach we follow here was conceived. He also thanks heartily Barry Mazur for numerous discussions and suggestions on this project. The first named author acknowledges support from European Union's Horizon 2020 research and innovation programme under the Marie Sk{\l}odowska-Curie Grant Agreement No. 745691. Parts of this work were carried during the second named author's visits to Harvard University in June 2018 and to University College Dubin in November 2018. He would like to thank these institutes for their hospitality. The second named author's research is supported by the NSERC Discovery Grants Program  RGPIN-2020-04259 and RGPAS-2020-00096.

%%%%%%%%%%%%%%%%%%%%%%%%%%%%%%%%%%%%%%%%%%%%%%%%%%%%%%%%%%%%%%%%%%%%%%%%%%%%%%%%%%%%%%%%%%%%%%%%%%%%%%%%%%%%%%%%%%%%%%%%%%%%%%%%%%%%%%%%%%%%%%%%%%%%%%%%%%%%%%%%%%%%%%%%%%%%%%%%%%%%%%%%%%%%%%%%%%%%%%%%%%%%%%%%%%%%%%%%%%%%%%%%%%%%%%%%%%%%%%%%%%%%%%%%%%%%%%%%%%%%%%%%%%%%%%%%%%%%%%%%%%%%%%%%%%%%%%%%%%%%%%%%%%%%%%%%%%%%%%%%%%%%%%%%%%%%%%%%%%%%%%%%%%%%%%%%%%%%%%%%%%%%%%%%%%%%%%%%%%%%%%%%%%%%%%%%%%%%%%%%%%%%%%%%%%%%%%%%%%%%%%%%%%%%%%%%%%%%%%%%%%%%%%%%%%%%%%%%%%%%%%%%%%%%%%%%%%%%%%%%%%%%%%%%%%%%%%%%%%%%%%%%%%%%%%%%%%%%%%%%%%%%%%%%%%%%%%%%%%%%%%%%%%%%%%%%%%%%%%%%%%%%%%%%%%%%%%%%%%%%%%%%%%%%%%%%%%%%%%%%%%%%%%%%%%%%%%%%%%%%%%%%%%%%%%%%%%%%%%%%%%%%%%%%%%%%%%%%%%%%%%%%%%%%%%%%%%%%%%%%%%%%%%%%%%%%%%%%%%%%%%%%%%%%%%%%%%%%%%%%%%%%%%%%%%%%%%%%%%%%%%%%%%%%%%%%%%%%%%%%%%%%%%%%%%%%%%%%%%%%%%%%%%%%%%%%%%%%%%%%%%%%%%%%%%%%%%%%%%%%%%%%%%%%%%%%%%%%%%%%%%%%%%%%%%%%%%%%%%%%%%%%%%%%%%%%%%%%%%%%
\section{$p$-adic $L$-functions in families}

\subsection{One-variable anticyclotomic $p$-adic $L$-functions}\label{S:padicL-one-var}

In this section, we review the one-variable anticyclotomic $p$-adic $L$-function of  Brako\v{c}evi\'{c} \cite{miljan} and Bertolini--Darmon--Prasanna \cite{bertolinidarmonprasanna13}. For our purposes, it is more convenient  to follow the exposition presented in \cite{CastellaHsiehGHC}.

Let $\Ig(N)_{/\ZZ_{(p)}}$ be the Igusa scheme of level $N$ over $\ZZ_{(p)}$, parameterizing elliptic curves with with $\Gamma_1(Np^\infty)$-level structure. Given a point $\x\in\Ig(N)\left(\overline{\mathbb{F}}_{p}\right)$, we write $\widehat{S}_\x\hookrightarrow \Ig(N)_{/\sW}$ for the local deformation space of $\x$ over $\sW$ (that is, isomorphism classes of elliptic curves whose mod $p$ representations coincide with that of $\x$). As explained in \cite[\S3.1]{CastellaHsiehGHC}, results of Katz \cite{katz81} on the canonical Serre--Tate coordinates yields the identification
\[
\cO_{\widehat{S}_\x}=\sW[[t-1]].
\]
Consequently, given any $p$-adic modular form $F$  of level $N$ defined over $\sW$, we may evaluate it at the Serre--Tate coordinate $t$ of $\x$:
\begin{equation}
    F(t)=F|_{\widehat{S}_\x}\in\sW[[t-1]]\,,
\label{eq:ST}
\end{equation}
resulting in a $\sW$-valued measure $dF$ on $\Zp$ via the Amice transform:
\begin{equation}
    \int_{\Zp}t^xdF(x)=F(t).
\label{eq:Amice}
\end{equation}
If $\phi:\Zp\lra \cO_{\Cp}$ is any continuous function, we define
\[
F\otimes\phi(t)=\int_{x\in\Zp}\phi(x)t^xdF\in\cO_{\Cp}[[t-1]].
\]

\begin{defn}
Fix a positive integer $c_0$ with $p\nmid c_0$. Let $\fa$ be a prime-to-$c_0\fN p$ integral ideal of $\cO_{c_0}$. This defines a CM point $\x_\fa\in \Ig(N)(\overline{\mathbb{F}}_p)$, as explained in \cite[\S3.2]{CastellaHsiehGHC}. For a $p$-adic modular form $F$ as above, we define the power series
\[
F_\fa(t)=F\left(t^{\frac{1}{N(\fa)\sqrt{-D_K}}}\right)\in\sW[[t-1]]
\]
using the Serre--Tate coordinate $t$ attached to $\x_\fa$. Here $N(\fa)=\#\cO_{c_0}/\fa$.
\end{defn}
{
\begin{defn}
Let ${\displaystyle f(q)=\sum_{n>0}a_n(f)q^n}$ be the $q$-expansion of $f$. The $p$-depletion of $f$, denoted by $f^\flat$, is the modular form of level $Np^2$ whose $q$-expansion is given by ${\displaystyle \sum_{p\,\nmid\, n > 0}a_n(f)q^n}$.
\end{defn}
}

\begin{defn}
Given a continuous function $\rho:\widetilde{\Gamma}_\ac\lra \cO_{\Cp}$, we write $\rho|[\fa]$ for the function $\Zp^\times\lra \cO_{\Cp}$ defined by
\[x\mapsto \rho\left(\rec_\p(x)\sigma_\fa^{-1}\right),\]
where $\rec_\p$ denotes the local reciprocity law $\Qp^\times=K_\p^\times\lra \Gal(K^{\mathrm{ab}}/K)\lra\widetilde{\Gamma}_\ac$ and $\sigma_\fa$ denotes the image of $\fa$ in $\Gal(K^\fa/K)$ under the Artin map with $K^\fa$ being the maximal abelian $\fa$-ramified extension of $K$.
\end{defn}
\begin{defn}\label{defn:BDP}
Let $\psi$ be an anticyclotomic Hecke character of infinity type $(k/2,-k/2)$ whose prime-to-$p$ conductor is $c_0\cO_K$.  The anticyclotomic $p$-adic $L$-function attached to $f$ and $\psi$ is defined by  the $p$-adic measure on $\widetilde{\Gamma}_\ac$:
\[
\sL_{\p,\psi}(f)(\rho)=\sum_{[\fa]\in \Pic\cO_{c_0}}\psi(\fa)N(\fa)^{-k/2}\left(\left(\hat{f^\flat}\right)_\fa\otimes \psi_\p\rho|[\fa]\right),
\]
where $\hat{f^\flat}$ denotes the $p$-adic avatar of $f^\flat$. We may identify it with an element in the Iwasawa algebra $\cO_L\otimes\sW[[\widetilde{\Gamma}_\ac]]$.
\end{defn}
{Note that we may replace $f$ by $f^\lambda$, where $\lambda\in\{\alpha,\beta\}$, and this results in the same $p$-adic $L$-function since the construction goes through the  $p$-depletion of $f$ and $f^\flat=(f^\lambda)^\flat$. }

%If $\chi$ is an anticyclotomic Hecke character of $K$, we set
%\[
%A(\chi)=\{\text{primes } q|D_K:\chi_q\text{ unramified and }q|N\}.
%\]
%We consider the following hypothesis.
%\begin{itemize}
%\item[\textbf{(Heeg')}] $N_f^-$ is a square-free product of primes ramified in $K$.
%\item[$\textbf{(ST)}_{f,\chi}$] $a_q(f)\chi(\q)=-1$ for all $q\in A(\chi)$, where $\q^2=q\cO_K$.
%\end{itemize}

The $p$-adic $L$-function $\sL_{\p,\psi}(f)$ satisfies the following interpolation properties.
\begin{theorem}\label{thm:interpolationBDP}
Let  $\psi$ be  an anticyclotomic Hecke character of infinity type $(k/2,-k/2)$ with prime-to-$p$ conductor $c_0$. If $\phi$ is a Hecke character factoring through $\widetilde{\Gamma}_\ac$ with infinity type $(m,-m)$ with $m\ge0$ and $\widehat\phi$ its $p$-adic avatar, then
\[
\left(\frac{\sL_{\p,\psi}(f)(\widehat\phi)}{\Omega_p^{k+2m}}\right)^2=\fe(f,\psi\phi)\phi(\fN^{-1})2^{\#A(\psi)+3}c_o\epsilon(f)u_K^2\sqrt{D_K}\frac{L(f_{/K} ,\psi\phi,k/2)}{\Omega_K^{2(k+2m)}},
\]
where  $\fe(f,\psi,\phi)$ is given by
$$
\frac{(k+m-1)!m!}{(4\pi)^{k+2m+1}(\mathrm{Im}\sqrt{-D_K}/2)^{k+2m}}\left(1-a_p(f)p^{-k/2}\psi\phi_{\p^c}(p)+\psi\phi_{\p^c}(p^2)p^{-1}\right)^{2},
$$
$\epsilon(f)$ is the root number of $f$ and $u_K=\#\cO_K^\times/2$.
\end{theorem}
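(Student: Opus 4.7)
The plan is to verify the interpolation formula by unpacking the definition of $\sL_{\p,\psi}(f)$ in terms of Serre--Tate coordinates, identifying the resulting sum of values of a nearly holomorphic modular form at CM points with a toric period, and then invoking a Waldspurger--Ichino-type formula (exactly as in \cite{bertolinidarmonprasanna13, CastellaHsiehGHC}). I expect the proof to be essentially a transcription of the Castella--Hsieh argument after matching notation.

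First, I would expand $\sL_{\p,\psi}(f)(\widehat\phi)$ using \eqref{eq:Amice} and the definition of $\rho|[\fa]$: each term in the sum becomes an integral against $\widehat\phi$ of the power series $\hat{f^\flat}_\fa(t)$ obtained from the Serre--Tate expansion of the $p$-depleted form at $\x_\fa$. Since $\widehat\phi$ has $p$-adic avatar of infinity type $(m,-m)$ and $\psi$ contributes the weight shift $(k/2,-k/2)$, the character on $\Zp^\times$ becomes the algebraic character $x\mapsto x^{k+2m}$ twisted by a finite-order piece, so the integral computes as $\theta^{k/2+m-1}(\hat{f^\flat})$ (or the analogous application of the Atkin--Serre/Katz $\theta$-operator) evaluated at the CM point $\x_\fa$, up to a controlled power of $\Omega_p$. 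This is precisely the step where the factor $\Omega_p^{-(k+2m)}$ on the left-hand side enters, once one keeps track of the normalization linking the Serre--Tate coordinate to the Katz differential.

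Next, I would collect the sum $\sum_{[\fa]\in\Pic\cO_{c_0}}\psi\widehat\phi(\fa)N(\fa)^{-(k/2+m)}\theta^m\hat{f^\flat}(\x_\fa)$ and identify it (via Katz's comparison between the holomorphic and $p$-adic theta operators on ordinary CM points, \cite{katz81}) with a toric period of the nearly holomorphic form of weight $k+2m$ associated to $f$ against $\psi\phi$. At this stage, the $p$-depletion of $f$ contributes exactly the Euler factor
\[
\Bigl(1-a_p(f)p^{-k/2}\psi\phi_{\p^c}(p)+\psi\phi_{\p^c}(p^2)p^{-1}\Bigr)^2
\]
after squaring, because each of the two degenerate factors removed by $f\mapsto f^\flat$ contributes one copy of the local Euler polynomial at $\p^c$ in the Rankin--Selberg setting with an anticyclotomic twist of conductor prime to $p$.

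Finally, I would invoke the explicit Waldspurger/Ichino formula for the Rankin--Selberg $L$-function $L(f_{/K},\psi\phi,k/2)$ in the form proved by Hsieh (see \cite[Prop.~3.8]{CastellaHsiehGHC}): the square of the toric period equals $L(f_{/K},\psi\phi,k/2)$ times the archimedean factor
\[
\frac{(k+m-1)!\,m!}{(4\pi)^{k+2m+1}(\operatorname{Im}\sqrt{-D_K}/2)^{k+2m}}
\]
and the global constants $2^{\#A(\psi)+3}c_0\,\epsilon(f)u_K^2\sqrt{D_K}$, with the factor $\phi(\fN^{-1})$ arising from the choice of the ideal $\fN$ in the Heegner hypothesis. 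Combining the last two steps yields the claimed identity. The main obstacle is bookkeeping: aligning the normalizations of $\Omega_p$, $\Omega_K$, the Atkin--Lehner/Heegner data, and the archimedean gamma factors so that the constants $2^{\#A(\psi)+3}$, $\epsilon(f)$, and $u_K^2\sqrt{D_K}$ come out exactly as stated; this is where I would lean most heavily on \cite{CastellaHsiehGHC, miljan2} rather than redo the computation from scratch.
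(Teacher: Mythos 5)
Your proposal and the paper's proof take the same route: both defer to \cite[Proposition~3.8]{CastellaHsiehGHC}. The paper's proof is a one-line citation of that result, while you sketch the ingredients that go into its proof (Serre--Tate expansion of the $p$-depleted form, the Katz theta operator matching the $\Omega_p$-normalization, and the Waldspurger/Hsieh toric-period formula) rather than reproving them; since you explicitly say you would lean on \cite{CastellaHsiehGHC, miljan2} for the bookkeeping, the content is the same.

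The one thing you omit is the verification the paper actually supplies: Castella--Hsieh's Proposition~3.8 is stated under their hypothesis (ST), and one must check it holds here. The paper observes that the strong Heegner hypothesis forces the set $A(\psi)=\{q\mid D_K: \psi_q \text{ unramified and } q\mid N\}$ to be empty, so (ST) is automatic (and incidentally $2^{\#A(\psi)+3}=8$). You carry the factor $2^{\#A(\psi)+3}$ through the formula but never note that the hypothesis under which the cited interpolation formula holds needs to be, and is, satisfied. That verification is the only substantive step in the paper's proof, and your write-up should include it before invoking Proposition~3.8.
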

\begin{proof}
This is \cite[Proposition~3.8]{CastellaHsiehGHC}.  {Note that the hypothesis (ST) in op. cit. holds automatically in our setting since we assume the strong Heegner hypothesis, which implies that the set of primes $A(\chi)=\{ q|D_K:\chi_q\text{ unramified and }q|N\}$ is empty. }
\end{proof}

\subsection{Anticyclotomic $p$-adic $L$-functions in families}
\label{subsec_anticyclo_padic_L_in_families}
We construct an anticyclotomic $p$-adic $L$-function for a Coleman family $\f\in {\Lambda_{(k,r_0)}}[[q]]$ passing through a $p$-stabilization $f^\lambda$ of $f$. This $p$-adic $L$-function belongs to $ {\Lambda_{(k,r_0)}}\,\widehat\otimes\sW[[\widetilde{\Gamma}_\ac]]$. The construction we present here is based on Brako\v{c}evi\'{c}'s work for Hida families in \cite{miljan2}. A similar construction is  also indicated in \cite[Theorem~7.3.3]{JLZ}.

{Since Katz' moduli interpretation of $p$-adic modular forms applies to any  $p$-adically complete ring, we may apply the theory of Serre--Tate coordinates  to $\Lambda_{(k,r_0)}$-adic modular forms, allowing us to give the following generalizations of \eqref{eq:ST} and \eqref{eq:Amice}:}
\begin{defn} 
For any $\Lambda_{(k,r_0)}$-adic modular form $\fF$ and a point $\x\in\Ig(N)(\overline{\mathbb{F}}_p)$, we define the power series
\[
\fF(t)=\fF|_{\widehat{S}_\x}\in {\Lambda_{(k,r_0)}}\,\widehat\otimes\sW[[t-1]],
\]
as in Section~\ref{S:padicL-one-var}, which gives rise to a $ {\Lambda_{(k,r_0)}}\otimes \sW$-valued measure $d\fF$ via the Amice transform.
\end{defn}

\begin{remark}
We note that our family of modular forms $\fF$ is a priori an element of the space $G(N,\Lambda_{(k,r_0)})$ of $\Lambda_{(k,r_0)}$-adic modular forms, in the notation of \cite{miljan2}. In the scenario when $\fF$ has slope zero, the containment $G(N,\Lambda_{(k,r_0)})\subset V(N;\cO_L)\widehat{\otimes}_{\cO_L} \Lambda_{(k,r_0)}$ of \cite[Proposition~7.1]{miljan2}  (which Brako\v{c}evi\'{c} explains in the second half of the proof of this proposition) amounts to saying that we may regard $\fF$ as a $\Lambda_{(k,r_0)}$-adic (geometric) modular form (see also \cite[Theorem~3.2.16]{hida00}; note that rather than the whole branch ${\mathbb{I}}$ in op. cit., we work with its restriction $\Lambda_{(k,r_0)}$ to an open disc in the weight space). In particular, it makes sense to evaluate $\fF$ at a point on the Igusa tower and $\fF(t)$ is indeed an element of $\Lambda_{(k,r_0)}\,\widehat\otimes\sW[[t-1]]$. 

When $\fF$ has positive slope, the argument in the second half of the proof of \cite[Proposition~7.1]{miljan2} goes through verbatim, in light of the discussion in the paragraph preceding Equation (6.1) in op. cit., where Brako\v{c}evi\'{c} observes that the ordinarity of $\fF$ is not needed in the construction of the geometric-modular-form valued measures. 
\end{remark}

\begin{defn}
Given any continuous function $\phi:\Zp\lra \cO_{\Cp}$, we define 
\[
\fF\otimes\phi(t)=\int_{\Zp}\phi(x)t^xd\fF\in  {\Lambda_{(k,r_0)}}\,\widehat\otimes\ \cO_{\Cp}[[t-1]]
\]
as before.
\end{defn}

\begin{defn}
Let $\psi$ be an anticyclotomic Hecke character of infinity type $(k/2,-k/2)$ whose prime-to-$p$ conductor is $c_0\cO_K$. Then, there exists a family of anticyclotomic Hecke characters $\Psi$ which admits $\psi$ as its weight $k$ specialization and such that its weight-$m$ specialization $\Psi_m$ is of infinity type $(m/2,-m/2)$. Furthermore, if $m\equiv k\mod (p-1)$, $\Psi_m$ has the same conductor as $\psi$. Otherwise, the finite-type of $\Psi_m$ differ from $\psi$ by powers of the Teichmuller character at $\p$ and $\p^c$. Such a CM family can be constructed in the same way as in \cite[Lemma~4.3.1]{collins}. Namely,
\[
\Psi=\psi \theta^{-k/2}(\theta^c)^{k/2}\cA^{1/2}(\cA^{c})^{-1/2},
\]
where $\theta$ is the Hecke character denoted by $\alpha$ in \cite[\S4.2]{collins} (it is of conductor $\p$ and infinity type $(1,0)$) and $\cA$ is a $\Lambda_{(k,r_0)}$-adic character passing through $\theta$ as given in \cite[\S4.3]{collins}.
\end{defn}

\begin{defn}
We let $\f^\flat$ denote the $p$-depletion of the Coleman family $\f$. We define the $\Lambda_{(k,r_0)}\otimes \sW$-valued measure on $\widetilde{\Gamma}_\ac$ by setting
\[
\sL_{\p,\psi}(\f)(\rho)=\sum_{[\fa]\in \Pic\cO_{c_0}}\Psi(\fa)N(\fa)^{-\frac{\kappa}{2}}\left(\left(\f^\flat\right)_\fa\otimes \Psi_\p\rho|[\fa]\right),
\]
where $\kappa$ is the weight variable and $\rho$ is a character if $\widetilde{\Gamma}_\ac$. 
\end{defn}
\begin{remark}
\label{rem_compare_BDP_of_JLZ}
{ Suppose that $\psi$ is the trivial character (in particular, $c_0=1$). On comparing the interpolation formulae given in Theorem~\ref{thm:interpolationBDP} and \cite[Theorem~7.2.2]{JLZ}, we deduce that $\sL_{\p,\mathbb{1}}(\f)$ agrees with the two variable $p$-adic $L$-function given in Theorem~7.3.3 in op. cit. after multiplying by a  unit in $\cO_L$.}
\end{remark}
\begin{remark}
If $m\equiv k\mod (p-1)$  the weight-$m$ specialization of $\sL_{\p,{\Psi}}(\f)$ coincides with $\sL_{\p,\Psi_m}(\f(m))$, where $\f(m)$ is the weight-$m$ specialization of $\f$. When $m=k$, this is precisely the $p$-adic $L$-function $\sL_{\p,\psi}(f)$ of  Brako\v{c}evi\'{c} and Bertolini--Darmon--Prasanna given in Definition~\ref{defn:BDP}.
\end{remark}

%%%%%%%%%%%%%%%%%%%%%%%%%%%%%%%%%%%%%%%%%%%%%%%%%%%%%%%%%%%%%%%%%%%%%%%%%%%%%%%%%%%%%%%%%%%%%%%%%%%%%%%%%%%%%%%%%%%%%%%%%%%%%%%%%%%%%%%%%%%%%%%%%%%%%%%%%%%%%%%%%%%%%%%%%%%%%%%%%%%%%%%%%%%%%%%%%%%%%%%%%%%%%%%%%%%%%%%%%%%%%%%%%%%%%%%%%%%%%%%%%%%%%%%%%%%%%%%%%%%%%%%%%%%%%%%%%%%%%%%%%%%%%%%%%%%%%%%%%%%%%%%%%%%%%%%%%%%%%%%%%%%%%%%%%%%%%%%%%%%%%%%%%%%%%%%%%%%%%%%%%%%%%%%%%%%%%%%%%%%%%%%%%%%%%%%%%%%%%%%%%%%%%%%%%%%%%%%%%%%%%%%%%%%%%%%%%%%%%%%%%%%%%%%%%%%%%%%%%%%%%%%%%%%%%%%%%%%%%%%%%%%%%%%%%%%%%%%%%%%%%%%%%%%%%%%%%%%%%%%%%%%%%%%%%%%%%%%%%%%%%%%%%%%%%%%%%%%%%%%%%%%%%%%%%%%%%%%%%%%%%%%%%%%%%%%%%%%%%%%%%%%%%%%%%%%%%%%%%%%%%%%%%%%%%%%%%%%%%%%%%%%%%%%%%%%%%%%%%%%%%%%%%%%%%%%%%%%%%%%%%%%%%%%%%%%%%%%%%%%%%%%%%%%%%%%%%%%%%%%%%%%%%%%%%%%%%%%%%%%%%%%%%%%%%%%%%%%%%%%%%%%%%%%%%%%%%%%%%%%%%%%%%%%%%%%%%%%%%%%%%%%%%%%%%%%%%%%%%%%%%%%%%%%%%%%%%%%%%%%%%%%%%%%%%%%%%%%%%%%%%%%%%%%%%%%%%%%%%%%%

\section{generalized Heegner cycles and a formula of Bertoini--Darmon--Prasanna}
\label{sec:GHC}

In this section, we give an overview of the work of Bertolini--Darmon--Prasanna and its enhancement by Castella and Hsieh  \cite{CastellaHsiehGHC} {and Kobayashi \cite{kobayashiGHC},  which give an explicit relation between generalized Heegner cycles and the $p$-adic $L$-function given in Definition~\ref{defn:BDP}}. Throughout this section $\kappa\in \ZZ_{\geq 2}$ denotes an even integer and $g=\sum_{n\geq 1}a_n(g)q^n\in S_\kappa(\Gamma_0(N))$ is a fixed normalized cuspidal eigenform, new of level $N$ satisfying \textbf{(BI)}. Let $\alpha_g,\beta_g$ denote the roots of the $p$-Hecke polynomial $X^2-a_p(g)X+p^{\kappa-1}$. We assume that the field $L$ is large enough to contain the Hecke field of $g$ as well as $\alpha_g$ and $\beta_g$. Throughout, we fix a choice of $\lambda\in\{\alpha,\beta\}$.  We write $g^\lambda$ for the $p$-stabilization of $g$ on which $U_p$ acts by $\lambda_g$.

\subsection{Deligne's representation, degeneracy maps and duality}
\label{subsec_duality_deligne_poincare}
In this subsection, we review various incarnations of Deligne's representation. 

 Let {$V_g$}  denote the self-dual twist of Deligne's $p$-adic representation attached to $g$ and {$T_g\subset V_g$ the Galois-stable lattice such that $T_g(\frac{\kappa}{2}-1)$ is realized as the $g$-isotypic quotient of $H^1_{\textup{\'et}}(Y_0(N)_{\overline{\QQ}},{\rm TSym}^{\kappa-2}\mathscr{H}_{\ZZ_p})$, where the \'etale sheaf ${\rm TSym}^{\kappa-2}\mathscr{H}_{\ZZ_p}$ is as in \cite{KLZ2}.}
{\begin{remark}
\label{rem_phsical_realizations_of_Deligne}
\item[i)] When it is important to emphasize this dependence, we  write $X_g(\mathscr{H}_{\ZZ_p})$, where $X\in\{T,V\}$, in place of $X_g$. The reason why this becomes relevant is that one may also realize $T_g$ as the $g$-isotypic direct summand of $H^1_{\textup{\'et}}(Y_0(N)_{\overline{\QQ}},{\rm Sym}^{\kappa-2}\mathscr{H}_{\ZZ_p}^\vee)(\frac{\kappa}{2})$,  where ${\rm Sym}^{\kappa-2}\mathscr{H}_{\ZZ_p}^\vee$ is the dual sheaf as in \cite{KLZ2}. When this realization is used and we need to keep track of this choice, we write $X_g(\mathscr{H}_{\ZZ_p}^\vee)$, where $X\in\{T,V\}$,  in place of $X_g$. 
\item[ii)] We note that the classical Heegner Cycles are realized with coefficients in $V_g(\mathscr{H}_{\ZZ_p}^\vee)$, whereas we will interpolate generalized Heegner cycles taking coefficients in $V_g(\mathscr{H}_{\ZZ_p})$. These two spaces are interchanged by the Atkin--Lehner operator $W_N$.
\end{remark}}

{We shall denote by $V_{g^\lambda}$ the $g^\lambda$-isotypic quotient of the cohomology of $H^1_{\textup{\'et}}(Y_0(Np)_{\overline{\QQ}},{\rm TSym}^{\kappa-2}\mathscr{H}_{\QQ_p})(1-\frac{\kappa}{2})$ and similarly define the lattice $T_{g^{\lambda}}$. As in Remark~\ref{rem_phsical_realizations_of_Deligne}, we will denote this representation by $V_{g^\lambda}(\mathscr{H}_{\ZZ_p})$ whenever necessary. Analogously, we also have the realization $V_{g^\lambda}(\mathscr{H}_{\ZZ_p}^\vee)$ as the $g^\lambda$-isotypic direct summand of $H^1_{\textup{\'et}}(Y_0(Np)_{\overline{\QQ}},{\rm Sym}^{\kappa-2}\mathscr{H}_{\QQ_p}^\vee)(\frac{\kappa}{2})$. The Atkin--Lehner operator $W_{Np}$ interchanges $V_{g^\lambda}(\mathscr{H}_{\ZZ_p})$ and $V_{g^\lambda}(\mathscr{H}_{\ZZ_p}^\vee)$.}

\begin{remark}
{In what follows, the shorthand $X_{g}^*(1)$ and $X_{g^\lambda}^*(1)$ will be reserved for the realization of the Galois representations attached to $g$ as $X_{g}(\mathscr{H}_{\ZZ_p}^\vee)$ and $X_{g^\lambda}(\mathscr{H}_{\ZZ_p}^\vee)$, respectively.}
\end{remark}

{There is a natural isomorphism 
$$({\rm pr}^\lambda)^*: V_{g}(\mathscr{H}_{\ZZ_p})\stackrel{\sim}{\lra} V_{g^\lambda}(\mathscr{H}_{\ZZ_p}),$$
given by $({\rm pr}^\lambda)^*={\rm pr}_1^*-\frac{1}{\lambda_g}{\rm pr}_2^*$, where 
$${\rm pr}_i^*: H^1_{\textup{\'et}}(Y_0(N)_{\overline{\QQ}},{\rm TSym}^{\kappa-2}\mathscr{H}_{\QQ_p})\lra H^1_{\textup{\'et}}(Y_0(Np)_{\overline{\QQ}},{\rm TSym}^{\kappa-2}\mathscr{H}_{\QQ_p})$$ 
($i=1,2$) are the degeneracy maps given as in \S2.4 of op. cit. We likewise have a natural isomorphism
$$({\rm pr}^\lambda_\vee)^*: V_{g}(\mathscr{H}_{\ZZ_p}^\vee)\stackrel{\sim}{\lra} V_{g^\lambda}(\mathscr{H}_{\ZZ_p}^\vee)$$
as well as isomorphisms $({\rm pr}^\lambda)_*$ and $({\rm pr}^\lambda_\vee)_*$ defined in the evident manner.} 

{\subsubsection{Poincar\'e duality}
\label{subsubsec_poincare}
There exist natural Poincar\'e duality pairings
$$\{\,,\,\}_{Np}:\,\,H^1_{\textup{\'et}}(Y_0(Np)_{\overline{\QQ}},{\rm TSym}^{\kappa-2}\mathscr{H}_{\QQ_p})\otimes H^1_{\textup{\'et}}(Y_0(Np)_{\overline{\QQ}},{\rm Sym}^{\kappa-2}\mathscr{H}_{\QQ_p}^\vee)\lra \QQ_p,$$
$$\{\,,\,\}_{N}:\,\,H^1_{\textup{\'et}}(Y_0(N)_{\overline{\QQ}},{\rm TSym}^{\kappa-2}\mathscr{H}_{\QQ_p})\otimes H^1_{\textup{\'et}}(Y_0(N)_{\overline{\QQ}},{\rm Sym}^{\kappa-2}\mathscr{H}_{\QQ_p}^\vee)\lra \QQ_p,$$
which induce a commutative diagram
$$\xymatrix@C=.1cm{
V_{g^\lambda}(\mathscr{H}_{\ZZ_p}) \ar[d]_{({\rm pr}^\lambda)_*}&\otimes& V_{g^\lambda}(\mathscr{H}_{\ZZ_p}^\vee)\ar[rrrrr]^(.6){\{\,,\,\}_{Np}}&&&&& L(1)\ar@{=}[d]\\
V_{g}(\mathscr{H}_{\ZZ_p}) &\otimes& V_{g}(\mathscr{H}_{\ZZ_p}^\vee)\ar[u]_{({\rm pr}^\lambda_\vee)^*}\ar[rrrrr]_(.6){\{\,,\,\}_{N}}&&&&& L(1).
}$$
Moreover, Atkin--Lehner operators give rise to the isomorphisms
$$W_{Np}:\,\,V_{g^\lambda}(\mathscr{H}_{\ZZ_p})\stackrel{\sim}{\lra} V_{g^\lambda}(\mathscr{H}_{\ZZ_p}^\vee),$$
$$W_{N}:\,\,V_{g}(\mathscr{H}_{\ZZ_p})\stackrel{\sim}{\lra} V_{g}(\mathscr{H}_{\ZZ_p}^\vee
).$$
This combined with the Poincar\'e duality diagram yields
$$\xymatrix@C=.1cm{
V_{g^\lambda}(\mathscr{H}_{\ZZ_p}) \ar[d]_{({\rm pr}^\lambda)_*}&\otimes& 
V_{g^\lambda}(\mathscr{H}_{\ZZ_p}) \ar[rrrrr]^{{\rm id}\otimes W_{Np}}&&&&&V_{g^\lambda}(\mathscr{H}_{\ZZ_p}) \ar[d]_{({\rm pr}^\lambda)_*}&\otimes& V_{g^\lambda}(\mathscr{H}_{\ZZ_p}^\vee)\ar[rrrrr]^(.6){\{\,,\,\}_{Np}}&&&&& L(1)\ar@{=}[d]\\
V_{g}(\mathscr{H}_{\ZZ_p}) &\otimes& 
V_{g}(\mathscr{H}_{\ZZ_p}) \ar[u]_{({\rm pr}^\lambda)^*}\ar[rrrrr]_{{\rm id}\otimes W_{N}}&&&&&V_{g}(\mathscr{H}_{\ZZ_p}) &\otimes& V_{g}(\mathscr{H}_{\ZZ_p}^\vee)\ar[u]_{({\rm pr}^\lambda_\vee)^*}\ar[rrrrr]_(.6){\{\,,\,\}_{N}}&&&&& L(1),
}$$ 
which simplifies to the following self-duality diagram:
\begin{equation}
\label{eqn_xymatrix_self_duality}
    \xymatrix@C=.1cm{
V_{g^\lambda}(\mathscr{H}_{\ZZ_p}) \ar[d]_{({\rm pr}^\lambda)_*}&\otimes& 
V_{g^\lambda}(\mathscr{H}_{\ZZ_p})\ar[rrrrrrrr]^(.6){\{\,\circ\,,\,\bullet\,\}_{Np}^\prime}&&&&&&&& L(1)\ar@{=}[d]\\
V_{g}(\mathscr{H}_{\ZZ_p}) &\otimes& 
V_{g}(\mathscr{H}_{\ZZ_p}) \ar[u]_{({\rm pr}^\lambda)^*}\ar[rrrrrrrr]_(.6){\langle\,\circ\,,\,\bullet\,\rangle_{N}^\prime}&&&&&&&& L(1).
}
\end{equation}
Here, we have put $\langle\,\circ\,,\,\bullet\,\rangle_{Np}^\prime:=\{\,\circ\,,\,W_{Np}(\bullet)\,\}_{Np}$ and $\langle\,\circ\,,\,\bullet\,\rangle_{N}^\prime:=\{\,\circ\,,\,W_{N}(\bullet)\,\}_{N}$. It is customary to normalize the duality in the bottom row using the Atkin--Lehner involution $\lambda_N(g)^{-1}W_N$ in place of $W_N$. Concretely, one defines $\langle\,\circ\,,\,\bullet\,\rangle_{N}^\prime:=\{\,\circ\,,\,\lambda_N(g)^{-1}W_{N}(\bullet)\,\}_{N}$ so that the self-duality diagram takes the form
\begin{equation}
\label{eqn_xymatrix_self_duality_2}
    \xymatrix@C=.1cm{
V_{g^\lambda}(\mathscr{H}_{\ZZ_p}) \ar[d]_{({\rm pr}^\lambda)_*}&\otimes& 
V_{g^\lambda}(\mathscr{H}_{\ZZ_p})\ar[rrrrrrrr]^(.6){\{\,\circ\,,\,\bullet\,\}_{Np}^\prime}&&&&&&&& L(1)\\
V_{g}(\mathscr{H}_{\ZZ_p}) &\otimes& 
V_{g}(\mathscr{H}_{\ZZ_p}) \ar[u]_{({\rm pr}^\lambda)^*}\ar[rrrrrrrr]_(.6){\langle\,\circ\,,\,\bullet\,\rangle_{N}}&&&&&&&& L(1).\ar[u]_{\lambda_N(g)}
}
\end{equation}}

{\begin{lemma}
\label{lemma_KLZ_2_Prop_10_1_1_last_par}
The following diagram commutes:
$$    \xymatrix@C=.1cm{
V_{g^\lambda}(\mathscr{H}_{\ZZ_p}) &\otimes& 
V_{g^\lambda}(\mathscr{H}_{\ZZ_p})\ar[rrrrrrrr]^(.6){\{\,\circ\,,\,\bullet\,\}_{Np}^\prime}&&&&&&&& L(1)\\
V_{g}(\mathscr{H}_{\ZZ_p}) \ar[u]_{({\rm pr}^\lambda)^*}&\otimes& 
V_{g}(\mathscr{H}_{\ZZ_p}) \ar[u]_{({\rm pr}^\lambda)^*}\ar[rrrrrrrr]_(.6){\langle\,\circ\,,\,\bullet\,\rangle_{N}}&&&&&&&& L(1).\ar[u]_{\lambda_g\lambda_N(g) \cE(g^\lambda)\cE^*(g^\lambda)}
}$$
Here, $\cE(g^\lambda)=\left(1-\frac{p^{\kappa-2}}{\lambda_g^2}\right)$, $\cE^*(g^\lambda)=\left(1-\frac{p^{\kappa-1}}{\lambda_g^2}\right)$.
\end{lemma}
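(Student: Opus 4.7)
The plan is to unfold both pairings in the diagram and reduce the claim to the computation of a single endomorphism of $V_g(\mathscr{H}_{\ZZ_p})$. Fix $x,y\in V_g(\mathscr{H}_{\ZZ_p})$. By the defining formula $\{\,,\,\}'_{Np}=\{\,,W_{Np}(\,\cdot\,)\}_{Np}$, the image of $x\otimes y$ under the top composition is $\{({\rm pr}^\lambda)^*(x),\,W_{Np}({\rm pr}^\lambda)^*(y)\}_{Np}$. Applying adjointness of $({\rm pr}^\lambda)^*$ and $({\rm pr}^\lambda)_*$ with respect to the Poincar\'e dualities at levels $N$ and $Np$ rewrites this as
\[
\bigl\{x,\,({\rm pr}^\lambda)_*\circ W_{Np}\circ({\rm pr}^\lambda)^*(y)\bigr\}_{N}.
\]
So the lemma is equivalent to the identity of endomorphisms of $V_g(\mathscr{H}_{\ZZ_p}^\vee)$
\[
({\rm pr}^\lambda)_*\circ W_{Np}\circ({\rm pr}^\lambda)^*
\;=\;\lambda_g\,\cE(g^\lambda)\,\cE^*(g^\lambda)\,W_{N},
\]
after using that $\langle\,\circ\,,\,\bullet\,\rangle_N=\{\,\circ\,,\lambda_N(g)^{-1}W_N(\bullet)\}_N$ (so that the factor $\lambda_N(g)$ on the right-hand vertical map matches the normalization of $\langle\,,\,\rangle_N$).

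Next I would expand $({\rm pr}^\lambda)^*={\rm pr}_1^*-\tfrac{1}{\lambda_g}{\rm pr}_2^*$ (and similarly for $({\rm pr}^\lambda)_*$) to get four terms of the form ${\rm pr}_{i,*}\circ W_{Np}\circ{\rm pr}_j^*$. The two basic tools are: (a) the compatibility of Atkin--Lehner operators with degeneracy maps, namely $W_{Np}\circ{\rm pr}_1^*=p^{?}\cdot{\rm pr}_2^*\circ W_N$ and $W_{Np}\circ{\rm pr}_2^*={\rm pr}_1^*\circ W_N$ (up to the explicit power of $p$ coming from the normalization of $W_{Np}$ as in \cite[\S2.5]{KLZ2}); and (b) the standard identities ${\rm pr}_{i,*}\circ{\rm pr}_i^*=p+1$ and ${\rm pr}_{1,*}\circ{\rm pr}_2^*=T_p'=$ (dual) Hecke operator at $p$ acting on level-$N$ cohomology, with analogous formula for ${\rm pr}_{2,*}\circ{\rm pr}_1^*$. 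Combining (a) and (b), each of the four terms collapses to a scalar times $W_N$ on the $g$-isotypic piece, with the scalars expressed in terms of $a_p(g)$, $p$, $p^{\kappa-1}$ and $\lambda_g$.

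Finally I would use $a_p(g)=\lambda_g+p^{\kappa-1}/\lambda_g$ and simplify. The resulting scalar factorizes as $\lambda_g\bigl(1-p^{\kappa-2}/\lambda_g^2\bigr)\bigl(1-p^{\kappa-1}/\lambda_g^2\bigr)=\lambda_g\,\cE(g^\lambda)\,\cE^*(g^\lambda)$; dividing by $\lambda_N(g)^{-1}$ to pass from $\{\,,\,\}_N$ to $\langle\,,\,\rangle_N$ yields the claimed vertical factor $\lambda_g\lambda_N(g)\cE(g^\lambda)\cE^*(g^\lambda)$. Indeed, this is precisely the computation carried out in the final paragraph of the proof of \cite[Proposition~10.1.1]{KLZ2}, which I would cite and adapt with the cohomological conventions of the present article.

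The main obstacle is purely bookkeeping: tracking the correct powers of $p$ and the precise interaction between the normalizations of $W_{Np}$, $W_N$, and the two degeneracy maps in the conventions of \cite{KLZ2}, and keeping straight which pairing ($\{\,,\,\}_N$ versus $\langle\,,\,\rangle_N$) absorbs the factor $\lambda_N(g)$. Once these conventions are pinned down, the identity falls out of the Hecke-eigenvalue relations on the $g^\lambda$-isotypic component.
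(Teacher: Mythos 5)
Your proof is correct and takes essentially the same route as the paper: both ultimately reduce to the computation in the final paragraph of the proof of \cite[Proposition~10.1.1]{KLZ2}, which is all the paper's one-line proof cites, and the identity $({\rm pr}^\lambda)_*\circ W_{Np}\circ({\rm pr}^\lambda)^*\circ\lambda_N(g)^{-1}W_N=\lambda_g\,\lambda_N(g)\,\cE(g^\lambda)\,\cE^*(g^\lambda)$ that you isolate via the Poincar\'e-duality adjunction is exactly what the paper extracts from that KLZ paragraph when proving Lemma~\ref{lemma_characterize_pstab}(iii). One cosmetic slip: $({\rm pr}^\lambda)_*\circ W_{Np}\circ({\rm pr}^\lambda)^*$ is a map $V_g(\mathscr{H}_{\ZZ_p})\to V_g(\mathscr{H}_{\ZZ_p}^\vee)$ (the middle $({\rm pr}^\lambda)_*$ is really $({\rm pr}^\lambda_\vee)_*$), so your displayed relation is not an identity of endomorphisms of $V_g(\mathscr{H}_{\ZZ_p}^\vee)$ but an equality of maps $V_g(\mathscr{H}_{\ZZ_p})\to V_g(\mathscr{H}_{\ZZ_p}^\vee)$; this does not affect the argument.
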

\begin{proof}
This is a restatement of the discussion in the final paragraph of \cite[Proposition 10.1.1]{KLZ2}.
\end{proof}
}

\begin{defn}
Let $\iota: \widetilde{\Gamma}_\ac\to \widetilde{\Gamma}_\ac$ denote the involution given by $\gamma\mapsto \gamma^{-1}$. We let $\ZZ_p[[\widetilde{\Gamma}_\ac]]^\iota$ denote the free $\ZZ_p[[\widetilde{\Gamma}_\ac]]$-module of rank-one on which $G_K$ acts via the composition $G_K\twoheadrightarrow \widetilde{\Gamma}_\ac\stackrel{\iota}{\lra}\widetilde{\Gamma}_\ac$. We put $\TT_g^\ac:=T_g{\otimes}_{\ZZ_p}\ZZ_p[[\widetilde{\Gamma}_\ac]]^\iota$. {We similarly define $\TT_{g^\lambda}^\ac$ for each $\lambda_g\in \{\alpha_g,\beta_g\}$.}
\end{defn}

\subsection{Generalized Heegner cycles} {We now discuss the definition of generalized Heegner cycles.}
Fix a positive integer $c_0$ coprime to $pN$ and a non-negative integer $s$. Set $c:=c_0p^s$. We denote by $K_c$ the ring class extension modulo $c$. For any locally algebraic anticyclotomic character {$\chi$ of conductor $c$, let us write $\chi:\Gal(K_{c_0p^\infty}/K)\rightarrow \Cp^\times$ for the Galois character associated to the $p$-adic avatar of $\chi$ by Class Field Theory (we refer the reader to \cite[Definition~3.4]{CastellaHsiehGHC} for a definition of $p$-adic avatars). When the infinity-type $(j,-j)$ of $\chi$ verifies $-\frac{\kappa}{2}<j<\frac{\kappa}{2}$,} the main construction of Bertolini--Darmon--Prasanna~\cite{bertolinidarmonprasanna13} gives rise to a collection of cohomology classes 
$$z_{g,\chi,c}\in H^1(K_{c},T_{g}\otimes{\chi})$$
(see, for example, \cite[(4.6)]{CastellaHsiehGHC}). As in (4.7) of op. cit., we define the \emph{generalized Heegner class} $z_{g,\chi}\in H^1(K,T_{g}\otimes{\chi})$ by setting $$z_{g,\chi}:={\rm{cor}}_{K_{c}/K}\left(z_{g,\chi,c}\right)\,.$$ 
\begin{defn}
\label{def_various_Heeg_j_stabilized}
\item[i)]{Let $\psi_\p$ and $\psi_{\p^c}$ be the canonical characters $G_K\rightarrow \Zp^\times$ as defined in \cite[\S2.3]{kobayashiGHC}.} {The infinity-types of the associated Hecke characters are $(1,0)$ and $(0,1)$ respectively and their conductor is $\p$ and $\p^c$, respectively.   }
\item[ii)] We write 
\[
T_g\langle j\rangle:= T_g(\psi_\p^j\psi_{\p^c}^{-j}), \quad V_g\langle j\rangle=V_g(\psi_\p^j\psi_{\p^c}^{-j})
\]
for $-\frac{\kappa}{2}<j<\frac{\kappa}{2}$ and denote the corresponding class by
$$z_{g,c}^{(j)}:=z_{g,\psi_\p^j\psi_{\p^c}^{-j},c}\in H^1(K_{c},T_{g}\langle j\rangle).$$ 
\item[iii)]  For  $s\ge1$, we define the $p$-stabilized class
\[
z_{g,c,\lambda_g}^{(j)}=z_{g,c}^{(j)}-p^{\kappa-2}\lambda_g^{-1}\res_{K_{c}/K_{cp^{-1}}}\left(z_{g,cp^{-1}}^{(j)}\right)\in \dfrac{1}{p}H^1(K_{c},T_{g}\langle j\rangle) \subset H^1(K_{c},V_{g}\langle j\rangle).
\]
\item[iv)] { Let us put $\cE_p(g^\lambda,c_0):=\frac{1}{|\cO_{c_0}^\times|}\left(1-\frac{p^{\frac{\kappa}{2}-1}}{\lambda_g}\sigma_\p\right)\left(1-\frac{p^{\frac{\kappa}{2}-1}}{\lambda_g}\sigma_{\p^c}\right)$, where $\sigma_\p$ and $\sigma_{\p^c}$ denote the Frobenii at $\p$ and $\p^c$ respectively. For $s=0$, we define 
$$z_{g,c_0,\lambda_g}^{(j)}:=\cE_p(g^\lambda,c_0)z_{g,c_0}^{(j)}\in \frac{1}{|\cO_{c_0}^\times|p^\kappa} H^1(K_{c},T_{g}\langle j\rangle) \subset H^1(K_{c},V_{g}\langle j\rangle).$$
\item[v)] When $j=0$, we omit the superscript $(j)$ from the  notation.
}
\end{defn}

{For  $s> 1$ and $-\frac{\kappa}{2}<j<\frac{\kappa}{2}$}, these elements  verify 
\begin{align}\label{eq:norm}
&\cor_{K_{c}/K_{cp^{-1}}}\left(z_{g,c,\lambda_g}^{(j)}\right)=\lambda_g\cdot z_{g,cp^{-1},\lambda_g}^{(j)}\\
\label{eq:cong}
&\sum_{j=0}^{\kappa-2}(-1)^j\binom{\kappa-2}{j}\Tw_{\frac{\kappa}{2}-j-1}\circ\res_{K_{c_0p^\infty}/K_{c}}\left(z_{g,c,\lambda_g}^{(j-\frac{\kappa}{2}+1)}\right)\in p^{s(\kappa-2)-C} H^1(K_{c_0p^\infty},T_g),
\end{align}
where 
$$\Tw_{\frac{\kappa}{2}-j-1}: H^1(K_{c_0p^\infty}, T_g\langle j-\frac{\kappa}{2}+1\rangle)\lra H^1(K_{c_0p^\infty}, T_g)$$  is the twisting map and $C$ is a constant which depends only on $v(\lambda_g)$.
{See  \cite[Equations (4.31) and (4.35)]{kobayashiota} as well as  \cite[proof of Proposition~5.3.1]{JLZ} where these relations are discussed}. The properties \eqref{eq:norm} and \eqref{eq:cong} allow one to interpolate $\{z_{g,c,\lambda_g}^{(j)}\}$ along the anticyclotomic tower. We refer the reader to \cite[\S8]{kobayashiGHC}  for a detailed discussion.

\begin{theorem}[Kobayashi]\label{thm_kobayashi_ota_ac_interpolation}
For every integer $c_0$ as before and $\lambda_g\in \{\alpha_g,\beta_g\}$ with $v(\lambda_g)<\kappa-1$, there exists a unique element 
$${\bz_{g,c_0,\lambda_g}^{\rm ac}}\in \frac{1}{p^C}H^1(K_{c_0},\TT_g^\ac)\,\widehat\otimes\, \cH_{v(\lambda_g)}^+({\widetilde{\Gamma}_\ac}),$$
where $C$ is some integer that depends only on $v(\lambda_g)$,
 such that its image under the composite map $\pi_{c_0,j}$
$$\pi_{c_0,j}: H^1(K_{c_0},\TT_g^\ac)\,\widehat\otimes\, \cH_{v(\lambda_g)}(\widetilde{\Gamma}_\ac)\lra H^1(K_{c_0},\TT_g^\ac\langle j\rangle)\,\widehat\otimes\, \cH_{v(\lambda_g)}(\widetilde{\Gamma}_\ac)\lra H^1(K_{c_0p^s},V_g\langle j\rangle)$$
equals $\lambda^{-s}_gz_{g,c_0p^s,\lambda_g}^{(j)}$.
\end{theorem}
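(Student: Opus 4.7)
The plan is to realize $\bz_{g,c_0,\lambda_g}^{\rm ac}$ as a bounded-order distribution on $\widetilde{\Gamma}_\ac$ valued in $H^1(K_{c_0},\TT_g^\ac)$, by applying the Amice--V\'elu/Vi\v{s}ik/Colmez criterion for reconstructing distributions from their moments. Under the identification of $\cH_{v(\lambda_g)}^+(\widetilde{\Gamma}_\ac)$ with distributions on $\widetilde{\Gamma}_\ac$ of logarithmic order at most $v(\lambda_g)$, an element of $H^1(K_{c_0},\TT_g^\ac)\widehat\otimes\cH_{v(\lambda_g)}^+$ is characterized by its specializations against the characters $\chi\cdot \psi_\p^j\psi_{\p^c}^{-j}$, where $\chi$ runs over finite-order characters of $\widetilde{\Gamma}_\ac$ and $-\kappa/2<j<\kappa/2$. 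The prescribed moments are precisely $\pi_{c_0,j}(\bz_{g,c_0,\lambda_g}^{\rm ac})=\lambda_g^{-s}z_{g,c_0p^s,\lambda_g}^{(j)}$.

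Uniqueness follows at once from Zariski-density of the above character locus inside the rigid analytic character variety of $\widetilde{\Gamma}_\ac$ associated to $\cH_{v(\lambda_g)}$, using the hypothesis $v(\lambda_g)<\kappa-1$ which ensures sufficiently many twists $j$ are available. For existence, first I would use the norm relation \eqref{eq:norm} to observe that, for each fixed $j$ in the admissible range, the renormalized system $\{\lambda_g^{-s}z_{g,c_0p^s,\lambda_g}^{(j)}\}_{s\ge 1}$ is corestriction-compatible and hence defines an element of $\varprojlim_s H^1(K_{c_0p^s},V_g\langle j\rangle)$. The factor $\lambda_g^{-s}$ forces unboundedness of logarithmic order exactly $v(\lambda_g)$ when $v(\lambda_g)>0$, which is why we cannot expect a class in ordinary Iwasawa cohomology and must enlarge the coefficients to $\cH_{v(\lambda_g)}^+$.

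The crux of the proof is to verify that, as both $s$ and $j$ vary, the moments $\lambda_g^{-s}z_{g,c_0p^s,\lambda_g}^{(j)}$ satisfy the Mahler-type congruences cutting out $\cH_{v(\lambda_g)}^+$ inside all $\Qp$-valued distributions on $\widetilde{\Gamma}_\ac$. This is precisely the content of \eqref{eq:cong}: after translation through class field theory, the alternating sum $\sum_j(-1)^j\binom{\kappa-2}{j}\Tw_{\frac{\kappa}{2}-j-1}\circ\res$ computes a $(\kappa-2)$-nd finite difference in $j$ of the moments at layer $s$, and the divisibility by $p^{s(\kappa-2)-C}$ matches Colmez's criterion for a distribution of logarithmic order at most $\kappa-1>v(\lambda_g)$. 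Feeding these compatible moments into the Amice transform reconstructs the required class in $\tfrac{1}{p^C}H^1(K_{c_0},\TT_g^\ac)\widehat\otimes\cH_{v(\lambda_g)}^+(\widetilde{\Gamma}_\ac)$, with the constant $C$ inherited from \eqref{eq:cong}.

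The main obstacle is the congruence \eqref{eq:cong} itself, which is far from formal: its proof (due to Kobayashi in \cite{kobayashiGHC}, amplified in \cite{kobayashiota}) rests on a detailed analysis of the Serre--Tate expansions of the generalized Heegner classes at CM points, together with a careful bookkeeping of the associated Bloch--Kato logarithms. Once \eqref{eq:cong} is granted, the remainder of the argument is a formal application of Colmez's reconstruction theorem combined with the norm compatibility \eqref{eq:norm}, and the unique integrality constant $C$ appearing in the target space is inherited from the estimate in \eqref{eq:cong}.
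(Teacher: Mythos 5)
Your overall strategy mirrors the paper's: both rely on the norm relation \eqref{eq:norm} to generate a corestriction-compatible tower, on the congruence \eqref{eq:cong} to verify the Amice--V\'elu/Colmez admissibility conditions of logarithmic order $v(\lambda_g)<\kappa-1$, and on the reconstruction theorem (the paper cites Colmez's \cite[Propositions II.2.3 and II.3.1]{colmez98} together with Kobayashi's integral Perrin-Riou twists, which is exactly the formalism you invoke). The uniqueness argument you give via Zariski-density of the character locus is the same as the one implicit in Colmez's propositions.

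However, your argument has a concrete gap: you do not explain why the class you obtain actually lands in $H^1(K_{c_0},\TT_g^\ac)\widehat\otimes\cH_{v(\lambda_g)}^+(\widetilde\Gamma_\ac)$ rather than merely in the $\Gal(K_{c_0p^\infty}/K_{c_0})$-invariants of $H^1(K_{c_0p^\infty},\TT_g^\ac\widehat\otimes\cH_{v(\lambda_g)}^+(\Gamma_\ac))$. The reconstruction of a distribution from the compatible moments $\lambda_g^{-s}z_{g,c_0p^s,\lambda_g}^{(j)}$ naturally produces a Galois-invariant class at the $K_{c_0p^\infty}$-level; to descend it to the base layer $K_{c_0}$ one must know that the inflation term in the inflation--restriction sequence vanishes, i.e., that $H^0(K_{c_0p^\infty},T_g)=0$ (and, for integral statements, also $H^0(K_{c_0p^\infty},\overline{T_g})=0$). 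The paper supplies this at the outset by invoking hypothesis \ref{item_BI} together with \cite[Lemma~3.10]{LV17}. Without this step, your argument only constructs a class in the bigger module, and the integrality constant $C$ cannot be controlled at the level of $H^1(K_{c_0},-)$. Adding this $H^0$-vanishing input closes the gap and makes the argument essentially coincide with the paper's.
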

\begin{proof}
Under the hypothesis \textbf{(BI)}, \cite[Lemma~3.10]{LV17} tells us that $H^0(K_{c_0p^\infty},T_g)=H^0(K_{c_0p^\infty},\overline{T_g})=0$, where $\overline{T_g}$ denotes the residual representation of $T_g$. By the properties \eqref{eq:norm} and \eqref{eq:cong},   the classes $\lambda^{-s}_gz_{g,c_0p^s,\lambda_g}^{(j)}$ are interpolated by a class 
$${\bz_{g,c_0,\lambda_g}^{\ac}}\in  \left(\frac{1}{p^C} H^1(K_{c_0p^\infty},\TT_g^\ac\,\widehat\otimes\, \cH_{v(\lambda_g)}^+(\Gamma_\ac))\right)^{\Gal(K_{c_0p^\infty}/K_{c_0})}=\frac{1}{p^C} H^1(K_{c_0},\TT_g^\ac)\widehat\otimes\, \cH_{v(\lambda_g)}^+(\Gamma_\ac)$$ 
{ by the integral Perrin-Riou twists developed by Kobayashi and \cite[Propositions~II.2.3 and II.3.1]{colmez98}. }
\end{proof}

\begin{defn}\label{defn_chi_part_of_ac_big_GHC}
Given a {ring class} character $\chi$ of conductor $c_0p^n$, we let 
$\bz_{g,c_0,\lambda_g}^\chi \in H^1(K_{c_0},V_g\otimes \chi)$ denote the image of ${\bz_{g,c_0,\lambda_g}^\ac}$ under the natural specialization morphism and set 
$$\bz_{g,\lambda_g}^\chi:={\rm cor}_{K_{c_0}/K}(\bz_{g,c_0,\lambda_g}^\chi)\in H^1(K,V_g\otimes \chi)\,.$$
\end{defn}

\begin{lemma}
\label{lemma_compare_GHCs_to_deformed_GHCs}
{If $\chi$ is a  ring class character  of conductor $c_0p^n$, with $n>0$, then 
$$\bz_{g,c_0,\lambda_g}^\chi=\lambda_g^{-n}z_{g,\chi,c_0}$$ 
 Consequently, $\bz_{g,\lambda_g}^\chi=\lambda_g^{-n}z_{g,\chi}\,.$ When $n=0$, we have
 \[
 \bz_{g,c_0,\lambda_g}^\chi=z_{g,\chi,c_0,\lambda_g}=\cE_p(g^\lambda,c_0)z_{g,\chi,c_0}\, ,
 \]}
{where $\cE_p(g^\lambda,c_0)$ is given in Definition~\ref{def_various_Heeg_j_stabilized}(iv).}
\end{lemma}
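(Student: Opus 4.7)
The plan is to deduce both identities directly from the interpolation property in Theorem~\ref{thm_kobayashi_ota_ac_interpolation}, combined with the defining recursion of the $p$-stabilized classes in Definition~\ref{def_various_Heeg_j_stabilized} and a standard character-sum cancellation.

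Under the Shapiro-type identification $H^1(K_{c_0},\TT_g^\ac)\cong \varprojlim_s H^1(K_{c_0p^s},T_g)$ (with transition maps given by corestriction), Theorem~\ref{thm_kobayashi_ota_ac_interpolation} applied with $j=0$ tells us that $\bz_{g,c_0,\lambda_g}^{\ac}$ corresponds to the norm-compatible system $\{\lambda_g^{-s}\,z_{g,c_0p^s,\lambda_g}\}_s$. For $\chi$ a ring class character of conductor $c_0p^n$, one checks that the natural specialization morphism at $\chi$ factors as
\[
H^1(K_{c_0},\TT_g^\ac)\,\widehat{\otimes}\,\cH_{v(\lambda_g)}(\widetilde{\Gamma}_\ac)\ \lra\ H^1(K_{c_0p^n},V_g)\ \stackrel{\cor(\,\cdot\,\otimes\chi)}{\lra}\ H^1(K_{c_0},V_g\otimes\chi),
\]
where the first map is $\pi_{c_0,0}$ at level $s=n$ and the second is the twisted corestriction (using that $\chi$ is trivial on $G_{K_{c_0p^n}}$ to identify $V_g$ with $V_g\otimes\chi$ over $K_{c_0p^n}$). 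Thus
\[
\bz_{g,c_0,\lambda_g}^\chi\ =\ \lambda_g^{-n}\,\cor_{K_{c_0p^n}/K_{c_0}}\!\left(z_{g,c_0p^n,\lambda_g}\otimes\chi\right).
\]

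Assume first that $n\geq 1$. Using Definition~\ref{def_various_Heeg_j_stabilized}(iii), the right-hand side expands as
\[
\lambda_g^{-n}\,\cor\!\left(z_{g,c_0p^n}\otimes\chi\right)\ -\ p^{\kappa-2}\lambda_g^{-n-1}\,\cor\!\left(\res_{K_{c_0p^n}/K_{c_0p^{n-1}}}(z_{g,c_0p^{n-1}})\otimes\chi\right).
\]
Setting $H:=\Gal(K_{c_0p^n}/K_{c_0p^{n-1}})$ and factoring $\cor_{K_{c_0p^n}/K_{c_0}}$ as $\cor_{K_{c_0p^{n-1}}/K_{c_0}}\circ\cor_{K_{c_0p^n}/K_{c_0p^{n-1}}}$, the $H$-invariance of $\res(z_{g,c_0p^{n-1}})$ brings out an inner character sum $\sum_{\rho\in H}\chi(\rho)$. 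Since $\chi$ has \emph{exact} conductor $c_0p^n$, the restriction $\chi|_H$ is non-trivial and this sum vanishes, killing the second summand. The first summand, by the construction of the twisted BDP classes recalled from \cite[\S4]{CastellaHsiehGHC}, is precisely $z_{g,\chi,c_0}$; we conclude $\bz_{g,c_0,\lambda_g}^\chi=\lambda_g^{-n}z_{g,\chi,c_0}$, and applying $\cor_{K_{c_0}/K}$ on both sides yields the asserted identity for $\bz_{g,\lambda_g}^\chi$.

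When $n=0$, the character $\chi$ is tame at $p$ and the specialization collapses to the $s=0$ member of the compatible system. By Theorem~\ref{thm_kobayashi_ota_ac_interpolation} (with $s=0$, $j=0$) combined with Definition~\ref{def_various_Heeg_j_stabilized}(iv), this member equals $z_{g,c_0,\lambda_g}=\cE_p(g^\lambda,c_0)\,z_{g,c_0}$, and the $\chi$-identification $V_g\cong V_g\otimes\chi$ over $K_{c_0}$ yields $\bz_{g,c_0,\lambda_g}^\chi=\cE_p(g^\lambda,c_0)\,z_{g,\chi,c_0}$. The main technical point to verify is the precise Shapiro/twist-corestriction description of the specialization at $\chi$: this demands careful bookkeeping of the $\iota$-involution in the definition of $\TT_g^\ac$ and of the normalization of $z_{g,\chi,c_0}$ in \cite{CastellaHsiehGHC}. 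Once aligned, both the character-sum vanishing and the identification of the remaining term with $z_{g,\chi,c_0}$ are standard.
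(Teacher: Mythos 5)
Your proof follows essentially the same route as the paper: the paper simply cites the general twisting formalism of Rubin \cite[Lemma 2.4.3]{rubin00} and states that the argument is identical to \cite[Lemma 5.4]{CastellaHsiehGHC}, whereas you unwind what those citations contain — the Shapiro-type identification with twisted corestriction, the recursion of Definition~\ref{def_various_Heeg_j_stabilized}(iii), and the character-sum cancellation forced by $\chi$ having exact conductor $c_0p^n$. The technical bookkeeping of the $\iota$-involution that you flag is precisely what Rubin's twisting formalism supplies, so the two proofs coincide in substance.
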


\begin{proof}
The assertions in this lemma follow from the general twisting formalism~\cite[Lemma 2.4.3]{rubin00} and their proofs follow in a manner identical to that of \cite[Lemma 5.4]{CastellaHsiehGHC}. 
\end{proof}

\begin{defn}
\item[i)] Let  $\psi$ be  an anticyclotomic Hecke character of infinity type $(\frac{\kappa}{2},-\frac{\kappa}{2})$ with prime-to-$p$ conductor $c_0$. We set $\cL_g\in {\cO_L\otimes\sW[[\widetilde{\Gamma}_\ac]]}$ to denote the twist of $\sL_{\p,\psi}(g)$ by the character $\widehat{\psi}^{-1}$. More precisely,  it is characterized by 
$$\cL_g(\chi)=\sL_{\p,\psi}(g)(\widehat{\psi}^{-1}\chi),$$ for any anticyclotomic character $\chi$. 
When we wish to emphasize the dependence of $\cL_g$ to $\psi$, we shall denote it by $\cL_{g,\psi}$.

Similarly, if $\f$ is a Coleman family as in \S\ref{subsec_anticyclo_padic_L_in_families}, we define $\cL_\f=\cL_{\f,\psi}$ to be the twist of $\sL_{\p,\psi}(\f)$ by $\widehat{\psi}^{-1}$.
\item[ii)] We define the equivariant Bertolini--Darmon--Prasanna $p$-adic $L$-function 
\begin{align*}
\cL_{g,c_0}&:=\sum_{\widehat\rho\in \widehat{\Gal(K_{c_0}/K)}}e_{\widehat{\rho}}\cdot\cL_{g^\rho}\\
&=\sum_{\widehat\rho\in \widehat{\Gal(K_{c_0}/K)}}e_{\widehat{\rho}}\cdot\cL_{g,\psi\rho}\in \ZZ_p[{\Gal(K_{c_0}/K)}]\otimes_{\ZZ_p}\cO_L\otimes_{}\sW[[\widetilde{\Gamma}_\ac]]\,,
\end{align*}
where $g^\rho:=g_{/K}\otimes \rho$ for each anticyclotomic Hecke character $\rho$ of finite order and conductor $c_0$, and ${\displaystyle e_{\widehat\rho}:=\sum_{\delta\in \Gal(K_{c_0}/K)}\widehat\rho(\delta)\delta^{-1}}$.
\end{defn}
\begin{remark}
\item[i)] There is a bijection $($induced by the Artin map of global class field theory$)$ between anticyclotomic Hecke characters of finite order and conductor $c_0$ and $p$-adic characters of $\Gal(K_{c_0}/K)$. 
\item[ii)] Given an anticyclotomic Hecke character $\psi$ of infinity type $(\frac{\kappa}{2},-\frac{\kappa}{2})$ with prime-to-$p$ conductor $c_0$, note that $\rho\psi$ runs through the set of anticyclotomic Hecke characters of infinity type $(\frac{\kappa}{2},-\frac{\kappa}{2})$ and conductor $c_0$ as $\rho$ runs through anticyclotomic Hecke characters of finite order and conductor $c_0$.
\end{remark}

{\begin{defn} \label{defn_our_omega_eta}
{Let $t$ be the uniformizer of Fontaine's ring $\mathbb{B}_\mathrm{dR}$ attached a system of compatible primitive $p$-power roots of unity $\epsilon=\{\zeta_{p^n}\}_{n\ge1}$. In what follows, we shall write $t^{-1}$ for the non-zero element $\epsilon\otimes t^{-1}$ of $\Dcris(\Qp(1))$. }
\label{def_omega_eta}% Let $\lambda_g\in\{\alpha_g,\beta_g\}$.% Recall that $T_g$ (resp., $W_{g^\lambda}^*$) denotes the homological Galois representation $W_g({\mathscr{H}_{\ZZ_p}^\vee})$ (resp., for the representation $W_{g^\lambda}({\mathscr{H}_{\ZZ_p}^\vee})$) attached to $g$.
\item[i)]  We let $\omega_g\in {\rm Fil^1}{\DD_{\rm cris}(T_g(-\frac{\kappa}{2}))}$ be the vector given as in \cite[Corollary~2.3]{bertolinidarmonprasanna13}\,.
We set $\omega_{g^\lambda}:=({\rm pr}^\lambda)^*(\omega_g)\in {\rm Fil^1}\Dcris(V_{g^{\lambda}}(-\frac{\kappa}{2}))$.
\item[ii)] We let $\eta_g \in {\DD_{\rm cris}(V_g({\frac{\kappa}{2}-1}))}/{\rm Fil}^{{0}}  {\DD_{\rm cris}(V_g({\frac{\kappa}{2}-1}))}$ denote the unique vector such that $\eta_g\otimes t^{{\frac{\kappa}{2}-1}}$ pairs to $1$ with  $\omega_g\otimes t^{{-\frac{\kappa}{2}}}$ under the canonical pairing 
$${\DD_{\rm cris}(V_g)}\otimes {\DD_{\rm cris}(V_g)}\stackrel{\{\,,\,\}_{N}}{\lra} \DD_{\rm cris}(L(1)) \stackrel{\otimes t}{\lra} L$$
induced from the Poincar\'e duality we have introduced in Section~\ref{subsubsec_poincare}.
\item[iii)] We let $\eta_{g^\lambda} \in \Dcris(T_{g^\lambda}({\frac{\kappa}{2}-1}))^{\vp=\lambda_gp^{1-\kappa}}$ denote the unique $\vp$-eigenvector such that $\eta_{g^\lambda}\otimes {t^{\frac{\kappa}{2}-1}}$ pairs with $\omega_{g^\lambda}\otimes t^{{-\frac{\kappa}{2}}}$  to $1$ under the natural pairing induced from  the Poincar\'e duality.
\end{defn}}

{
\begin{remark}\label{rem_compare_eta_g_to_eta_lambda_g}
Only in this remark, we shall also work with the conjugate modular form $g^*$. Under our assumption that $g$ has trivial nebentype, this of course is identical to the original eigenform $g$. However, in order to compare the vectors we have introduced in Definition~\ref{defn_our_omega_eta} with the corresponding objects in \cite{KLZ2}, we will work with $g^*$ wherever appropriate. In the notation of \cite{KLZ2}, we note that $T_{g^*}(-\kappa/2)=M_L(g^*)$, whereas $T_{g^*}(\kappa/2-1)=M_L(g^*)^*$. 
\item[i)] In \cite[Proposition 10.1.1]{KLZ2}, the authors treat $\omega_{g^*}\in {\rm Fil}^1\Dcris(M_L(g^*))$ as a morphism 
$$\omega_{g^*}^{\rm KLZ}: \Dcris(M_L(g^*)^*(1-\kappa))\lra L\,.$$
The morphism $\omega_{g^*}^{\rm KLZ}$ is given as the composition of the arrows
$$\Dcris(M_L(g^*)^*(1-\kappa))\stackrel{\otimes t^{1-\kappa}}{\lra} \Dcris(M_L(g^*)^*)\stackrel{\omega_{g^*}}{\lra} L$$
where the final arrow is the pairing with $\omega_{g^*}\in {\rm Fil}^1\Dcris(M_L(g^*))$. In other words, we have
$$\omega_{g^*}^{\rm KLZ}=\omega_{g^*}\otimes t^{1-\kappa}$$
as elements of $\Dcris(M_L(g^*)(\kappa-1))=\Dcris(M_L(g)^*)$, where the final identification is via the canonical isomorphism $M_L(g^*)(\kappa-1)\stackrel{\sim}{\lra} M_L(g)^*$.
\item[ii)] Let us put 
\begin{align*}
    \eta_{g^\lambda}^{\rm KLZ}:=\eta_{g^\lambda}\otimes t^{\kappa-1} \in \Dcris(M_L(g^*)^*)^{\vp=\lambda_gp^{1-\kappa}}\otimes \Dcris(\QQ_p(1-\kappa))&=\Dcris(M_L(g^*)^*(1-\kappa))^{\vp=\lambda_g}\\
    &=\Dcris(M_L(g))^{\vp=\lambda_g}
\end{align*}
where the final identification is again via the canonical isomorphism $M_L(g^*)(\kappa-1)\stackrel{\sim}{\lra} M_L(g)^*$. By the definition of $\eta_{g^\lambda}$, the vector $\eta_{g^\lambda}^{\rm KLZ}$ pairs with $\omega_{g^{*,\lambda}}^{\rm KLZ}$ to $1$, under the de Rham pairing induced from the Poincar\'e duality $\{\,,\,\}_{Np}$ on the cohomology of the modular curve $Y_0(Np)$ of level $Np$. 
\item[iii)] Building on the discussion in \cite[\S10.1]{KLZ2}, Loeffler and Zerbes explain in \cite[Corollary 6.4.3]{LZ1} that $\eta_{g^\lambda}^{\rm KLZ}$ is the specialization of a vector $\eta_\f^{\rm LZ}$, where $\f$ is the unique Coleman family through the $p$-stabilized eigenform $g^\lambda$. See also Remark~\ref{rem_appendix_explain_C_again} for a detailed discussion concerning $\eta_\f^{\rm LZ}$. In \cite{LZ1}, the vector $\eta_\f^{\rm LZ}$ is plainly denoted by $\eta_\f$. In the present work, the notation $\eta_\f$ is reserved for an alteration of $\eta_\f^{\rm LZ}$, see Definition~\ref{defn_big_omega_eta}.
\end{remark}}

{We next introduce the avatars of the generalized Heegner cycles that we realize in the cohomology of level $Np$ modular curves. They will be denoted by $\frak{z}$ (decorated suitably), as opposed to $z$ or ${\bz}$ which we used above to notate generalized Heegner cycles that are realized in the cohomology of modular curves of level $N$. Recall also that $T_{g^\lambda}$ stands for the lattice $T_{g^\lambda}(\mathscr{H}_{\ZZ_p})$, which is realized as the $g^\lambda$-isotypic quotient of $H^1_{\textup{\'et}}(Y_0(Np)_{\overline{\QQ}},{\rm TSym}^{\kappa-2}(\mathscr{H}_{\ZZ_p}))$.}

{\begin{defn}
\label{defn_pstabilizedGHC_levelNp} Let $c_0$ be a positive integer coprime to $p$.
\item[i)] We let ${\frak{z}}_{g^\lambda,c_0}^\ac\in p^{-C}H^1(K_{c_0},\TT_{g^\lambda}^\ac)\red{\,\otimes_{\LL(\widetilde\Gamma_\ac)}\, \cH_{v(\lambda_g)}^+({\widetilde{\Gamma}_\ac})}$ denote the unique class which verifies 
$$({\rm pr}^\lambda)_*({\frak{z}}_{g^\lambda,c_0}^\ac)=\bz_{g,c_0,\lambda_g}^\ac\,.$$
\item[ii)] For any ring class character $\chi$ of conductor $c_0p^n$, we similarly define ${\frak{z}}_{g^\lambda,c_0}^\chi\in H^1(K_{c_0},V_{g^\lambda}\otimes\chi)$ as the unique class which verifies 
$$({\rm pr}^\lambda)_*({\frak{z}}_{g^\lambda,c_0}^\chi)=\bz_{g,c_0,\lambda_g}^\chi\,.$$
We also put ${\frak{z}}_{g^\lambda}^\chi:={\rm cor}_{K_{c_0}/K}({\frak{z}}_{g^\lambda,c_0}^\chi)\in H^1(K,V_{g^\lambda}\otimes \chi)$.
\item[iii)] We write ${\frak{z}}_{g^\lambda}$ in place of ${\frak{z}}_{g^\lambda,1}^\mathds{1}$ and ${\frak{z}}_{g^\lambda}^\mathds{1}$.
\end{defn}}

{\begin{remark}
\label{rem_LZ_pstabilization}
The $p$-stabilized class ${\frak{z}}_{g^\lambda}^\chi\in H^1(K,V_{g^\lambda}\otimes \chi)$ corresponds to the class denoted by $z_{\textup{\'et}}^{[g_{\lambda_g},\chi]}$ in \cite[\S3.5]{JLZ}; see also Proposition~3.5.2 in op. cit. We explain in Lemma~\ref{lemma_characterize_pstab} how to recover it from the non-$p$-stabilized class when $\chi=\mathds{1}$.
\end{remark}}

We have a map (semi-local Perrin-Riou big exponential map along the anticyclotomic tower) 
$$\Omega^{\varepsilon,{\p}}_{T_{g^\lambda}, \frac{\kappa}{2},c_0}: \DD_{\rm cris}(T_{g^\lambda})\otimes_{\ZZ_p}\sW[\Gal(K_{c_0}/K)][[\widetilde{\Gamma}_\ac]]\lra H^1(K_{c_0,\p},\TT^\ac_{g^\lambda})\widehat\otimes p^{c(\lambda_g)}\cH^+_{v(\lambda_g)}(\widetilde{\Gamma}_\ac)_\sW$$
by Zhang~\cite{zhangLT}. We review this in Appendix~\ref{sec:biglogalaonganticyclotower} (see Section~\ref{subsec_semilocalPR} for their semi-local versions). We refer readers to \eqref{eqn_PR_big_Exp_LT} and \eqref{eq:Xi} in the appendix where we outline the interpolative property which characterizes this map. {Note that the map we are considering here is the one attached to the tower of Lubin--Tate extensions of $K_\p$ given by the localization of $K_{p^\infty}$ at a prime above $\p$ and $\varepsilon$ denotes the canonical norm compatible uniformizers in this tower given as in \cite[\S4.7]{kobayashiGHC}. } 

\begin{theorem}[Kobayashi]
\label{thm_GHC_rec_law_g} {We have
$$\Omega^{\varepsilon,{\p}}_{T_{g^\lambda}, \frac{\kappa}{2},c_0}\left(\left(\eta_{g^\lambda}\otimes t^{\frac{\kappa}{2}-1}\right)\otimes \cL_{g,c_0}^\iota\right)=\res_\p\left({\frak{z}}_{g^\lambda,c_0}^\ac\right),$$
where $\iota$ is the involution on $\sW[\Gal(K_{c_0}/K)][[\widetilde{\Gamma}_\ac]]$ sending elements of $\widetilde{\Gamma}_\ac$ and $\Gal(K_{c_0}/K)$ to their inverses.}
\end{theorem}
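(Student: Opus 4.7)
The plan is a density argument. Both sides of the claimed identity lie in $H^1(K_{c_0,\p},\TT^\ac_{g^\lambda})\widehat\otimes p^{c(\lambda_g)}\cH^+_{v(\lambda_g)}(\widetilde{\Gamma}_\ac)_\sW$, and an element of this module of $\sW$-valued distributions of logarithmic order $v(\lambda_g)$ is determined by its specializations at locally algebraic characters of $\widetilde{\Gamma}_\ac$ of sufficiently large infinity type (this is a standard consequence of the Amice--Vi\v{s}ik theory underlying the definition of $\cH_h^+$). It therefore suffices to verify equality after specializing at every locally algebraic anticyclotomic character $\chi$ of infinity type $(j,-j)$ with $-\kappa/2<j<\kappa/2$ and conductor $c_0p^n$, $n\geq 0$, since such characters form a dense subset.

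For the right-hand side, Theorem~\ref{thm_kobayashi_ota_ac_interpolation} combined with Definition~\ref{defn_pstabilizedGHC_levelNp} and Lemma~\ref{lemma_compare_GHCs_to_deformed_GHCs} identifies the $\chi$-specialization of $\res_\p({\frak{z}}_{g^\lambda,c_0}^\ac)$ with $\lambda_g^{-n}\res_\p\bigl(({\rm pr}^\lambda)^{*}z_{g,\chi,c_0 p^n}^{(j)}\bigr)$ (multiplied by $\cE_p(g^\lambda,c_0)$ in the exceptional case $n=0$). On the left-hand side, the defining interpolation property of the Perrin-Riou--Zhang big exponential map along the anticyclotomic Lubin--Tate tower attached to $\varepsilon$ (equations~\eqref{eqn_PR_big_Exp_LT}--\eqref{eq:Xi} of the appendix) expresses the $\chi$-specialization of $\Omega^{\varepsilon,\p}_{T_{g^\lambda},\kappa/2,c_0}\bigl((\eta_{g^\lambda}\otimes t^{\kappa/2-1})\otimes \cL_{g,c_0}^\iota\bigr)$ as the Bloch--Kato exponential (or its dual, depending on the sign of $j$) applied to the scalar multiple $\overline\chi(\cL_{g,c_0})\cdot(\eta_{g^\lambda}\otimes t^{\kappa/2-1})$, corrected by the prescribed local Euler factor at $\p$. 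The twist by $\widehat{\psi}^{-1}$ built into the passage from $\sL_{\p,\psi}(g)$ to $\cL_g$ ensures that $\overline\chi(\cL_{g,c_0})$ is a genuine BDP special value at the anticyclotomic character $\psi\overline\chi$, whose infinity type $(\kappa/2+j,-\kappa/2-j)$ lies in the range of interpolation of Theorem~\ref{thm:interpolationBDP}.

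The two specializations are then matched via the classical Bertolini--Darmon--Prasanna reciprocity formula \cite[Theorem~5.13]{bertolinidarmonprasanna13} (in the refined form of \cite[Theorem~4.9]{CastellaHsiehGHC}): for $\chi$ as above, this formula identifies $\overline\chi(\cL_{g,c_0})$, after pairing against $\omega_g\otimes t^{-\kappa/2}$, with the Bloch--Kato logarithm pairing of $\res_\p\bigl(z_{g,\chi,c_0}^{(j)}\bigr)$ against $\eta_g\otimes t^{\kappa/2-1}$, up to an explicit local factor. Using the relation $({\rm pr}^\lambda)^{*}\omega_g=\omega_{g^\lambda}$ together with the duality comparison of Lemma~\ref{lemma_KLZ_2_Prop_10_1_1_last_par} transports this level-$N$ identity to level $Np$ and recovers exactly the quantity produced by the Perrin-Riou interpolation formula; corestricting to $K_{c_0p^n}$ accounts for the sum over ${\widehat\rho\in\widehat{\Gal(K_{c_0}/K)}}$ implicit in the definition of $\cL_{g,c_0}$.

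The genuine difficulty is bookkeeping: the proof reduces to showing that four distinct local correction factors cancel simultaneously for every $\chi$ in the critical range. These are (i) the $p$-depletion factor $\bigl(1-a_p(g)p^{-\kappa/2}\psi\phi_{\p^c}(p)+\psi\phi_{\p^c}(p^2)p^{-1}\bigr)^{2}$ appearing in Theorem~\ref{thm:interpolationBDP}; (ii) the Euler factor in the interpolation formula of $\Omega^{\varepsilon,\p}$; (iii) the stabilization factor $\lambda_g^{-n}$ of Theorem~\ref{thm_kobayashi_ota_ac_interpolation}, together with the boundary correction $\cE_p(g^\lambda,c_0)$ of Definition~\ref{def_various_Heeg_j_stabilized}(iv); and (iv) the duality ratio $\lambda_g\lambda_N(g)\cE(g^\lambda)\cE^*(g^\lambda)$ of Lemma~\ref{lemma_KLZ_2_Prop_10_1_1_last_par} governing the passage between the pairings used to define $(\omega_g,\eta_g)$ at level $N$ and $(\omega_{g^\lambda},\eta_{g^\lambda})$ at level $Np$. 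The exceptional case $n=0$, where $\chi$ is unramified at $p$ and an additional two-variable Euler factor at $\p$ and $\p^c$ intervenes, deserves particular care; once the cancellation is verified uniformly in $\chi$, the density argument concludes the proof.
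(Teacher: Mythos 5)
Your approach is genuinely different from the paper's. The paper does not reprove the reciprocity law: it first shows, using \cite[Theorem~5.7]{kobayashiGHC} together with the normalization of $\eta_{g^\lambda}$ in Definition~\ref{defn_our_omega_eta}, that $(\eta_{g^\lambda}\otimes t^{\frac{\kappa}{2}-1})\otimes \cL_{g,c_0}^\iota$ coincides with Kobayashi's $\Dcris$-valued $p$-adic $L$-function $\mathscr{L}^{\ac}_{\p,\lambda_g}(g,A_{c_0};\pmb{e}_{\overline{\pmb{v}}})$ (both live in a rank-one $\vp$-eigenspace and agree after pairing against $\omega_{g^\lambda}\otimes t^{-\kappa/2}$), and then the displayed identity is exactly \cite[Lemma~8.2]{kobayashiGHC}. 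Your plan, by contrast, is to re-derive Kobayashi's Lemma~8.2 from scratch by a density argument reducing to the pointwise BDP formula. You miss entirely the identification that lets the paper conclude in one line, and instead propose to recreate its content.

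The difficulty is that what you defer is not incidental bookkeeping — it is the theorem. Your three preliminary reductions (density, RHS specialization via Theorem~\ref{thm_kobayashi_ota_ac_interpolation} and Lemma~\ref{lemma_compare_GHCs_to_deformed_GHCs}, LHS specialization via \eqref{eqn_PR_big_Exp_LT}--\eqref{eq:Xi}) are sound in outline, though already slightly imprecise: the specialization of ${\frak{z}}_{g^\lambda,c_0}^\ac$ involves $\left(({\rm pr}^\lambda)_*\right)^{-1}$ rather than $({\rm pr}^\lambda)^{*}$, and these differ by the scalar $\lambda_g\lambda_N(g)\cE(g^\lambda)\cE^*(g^\lambda)$ of Lemma~\ref{lemma_KLZ_2_Prop_10_1_1_last_par}, precisely one of the factors whose cancellation you postpone. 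The final paragraph, where you list four families of local correction factors and assert that they cancel uniformly in $\chi$ (including a delicate separate treatment of $n=0$ and of the split between $\exp$ and $\exp^*$ as the sign of $j$ changes), is the actual mathematical substance of \cite[Lemma~8.2]{kobayashiGHC}; leaving it as "once the cancellation is verified" is a genuine gap. As written, your argument is a plan for how one might go about proving Kobayashi's lemma, not a proof of this theorem.
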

\begin{proof}
{Let us write 
$$\mathscr{L}^{ac}_{\p,\lambda_g}(g,A_{c_0};\pmb{e}_{\overline{\pmb{v}}})\in  \DD_{\rm cris}(T_{g^\lambda})^{{\vp=\lambda_g p^{-\frac{\kappa}{2}}}}\otimes_{\ZZ_p}\sW[\Gal(K_{c_0}/K)][[\widetilde{\Gamma}_\ac]]$$
for the $p$-adic $L$-function defined in \cite[\S5.2]{kobayashiGHC}, where we have taken $\alpha$ to be $\lambda_g$, the vector $\overline{\pmb{v}}$ is as in Theorem~5.7 of op. cit. and $\{\pmb{e}_{\overline{\pmb{v}}}\}$ is a basis of the kernel of $\psi$ inside the Robba ring with coefficients in  $\sW$, given as in \S5.1 (just before the statement of Proposition 5.4) in op. cit.} 

{
We recall the element $\omega_{A,{}^t\overline{v}}^\vee\xi_{A,{}^t\overline{v}}^\vee \in \Dcris(\ZZ_p(1))$ in op. cit., which is the image of 
$$\omega_{A,{}^t\overline{v}}^\vee\otimes \xi_{A,{}^t\overline{v}}^\vee \in \Dcris(T_\p(A)^\vee)^\vee \otimes \Dcris(T_{\p^*}(A)^\vee)^\vee$$ 
introduced in \cite[\S4.3]{kobayashiGHC}, under the map induced from the de Rham pairing (which in turn is induced from the Weil pairing $T_{\p}(A)\otimes T_{\p^*}(A) \to \ZZ_p(1)$)
$$ \Dcris(T_\p(A)^\vee)^\vee \otimes \Dcris(T_{\p^*}(A)^\vee)^\vee\lra \Dcris(\ZZ_p(-1))^\vee=
\Dcris(\ZZ_p(1))\,.$$
Then $\omega_{A,{}^t\overline{v}}^\vee\xi_{A,{}^t\overline{v}}^\vee$ corresponds to the canonical basis of $\Dcris(\ZZ_p(1))$, denoted by $t^{-1}$ in Definition~\ref{defn_our_omega_eta}.}

{Theorem~5.7 of op. cit. tells us that the image of the element $\mathscr{L}^{ac}_{\p,\lambda_g}(g,A_{c_0};\pmb{e}_{\overline{\pmb{v}}})\otimes \left(\omega_{g^\lambda}\otimes t^{{-\frac{\kappa}{2}}}\right)$ under the  pairing which extends $\sW[\Gal(K_{c_0}/K)][[\widetilde{\Gamma}_\ac]]$-linearly the natural pairing 
$$\Dcris(V_{g^\lambda})\otimes \Dcris(V_{g^\lambda}) \lra \Dcris(L(1))\lra L$$
is equal to $\cL_{g,c_0}^\iota$. We note that the pairing considered in op. cit. is the one extending linearly  $\Dcris(V_{g^\lambda})\otimes \Dcris(V_{g^\lambda}(-1))\lra L$. This is why in op. cit., $\mathscr{L}^{ac}_{\p,\lambda_g}(g,A_{c_0};\pmb{e}_{\overline{\pmb{v}}})$ is paired with $\omega_{g^\lambda}\otimes t^{-\kappa/2+1}\in \Dcris(V_{g^\lambda}(-1))$, instead of $\omega_{g^\lambda}\otimes t^{-\kappa/2}\in \Dcris(V_{g^\lambda})$.
Since $\eta_{g^\lambda}\otimes t^{\frac{\kappa}{2}-1}$ is a basis of $\DD_{\rm cris}(V_{g^\lambda})^{\vp=\lambda_g p^{-\frac{\kappa}{2}}}$  and it pairs with $\omega_g\otimes t^{-\frac{\kappa}{2}}$ to 1, we have the equality
\[
\left(\eta_{g^\lambda}\otimes t^{\frac{\kappa}{2}-1}\right)\otimes \cL_{g,c_0}^\iota=\mathscr{L}^{ac}_{\p,\lambda_g}(g,A_{c_0};e_v).
\]
The proof now follows from \cite[Lemma~8.2]{kobayashiGHC}.
}
\end{proof}

{We end this section with a description of ${\frak{z}}_{g^\lambda}$ in terms of classical Heegner cycles. This will be useful in the main calculations of \cite{BPSI}. It is also useful when comparing our main interpolation results (Theorem~\ref{thm_main_GHC_in_families_with_ac} below) with those established in \cite{ota2020} in the context of slope-zero families. For simplicity, we will treat for most part the important case $\chi=\mathds{1}$ and $c_0=1$.}

{In what follow,  $V_{g}^*(1)$ stands for the homological Galois representation $V_{g}(\mathscr{H}_{\ZZ_p}^\vee)$, which is realized in $H^1_{\textup{\' et}}(Y_0(N)_{\overline{\QQ}},{\rm Sym}^{\kappa-2}(\mathscr{H}_{\ZZ_p}^{\vee}))(\kappa/2)$. Recall also that the Atkin--Lehner involution $\lambda_N(g)^{-1}W_N$ gives rise to an isomorphism $V_g^*(1)\stackrel{\sim}{\to} V_g$.} 
{\begin{defn}
\label{defn_classical_heegner}
We let ${z}_{g,c_0} \in H^1(K_{c_0},V_g^*(1))$ be the classical Heegner class $($given as in \cite[\S 3]{NekovarGZ}$)$ of conductor $c_0$. We also put ${z}_{g}:={\rm cor}_{K_1/K} \left({z}_{g,1}\right) \in H^1(K,V_{g}^*)$\,.
\end{defn}}

{
\begin{lemma}
\label{lemma_characterize_pstab}
\item[i)] ${z}_{g,\mathds{1},c_0}=\dfrac{\lambda_N(g)^{-1}W_N({z}_{g,c_0})}{(2c_0\sqrt{-D_K})^{\frac{\kappa}{2}-1}}$.
\item[ii)] ${\bf z}_{g,\lambda_g}^{\mathds{1}}={\left(1-\dfrac{p^{\frac{\kappa}{2}-1}}{\lambda_g}\right)^2}\dfrac{u_K^{-1}}{(2\sqrt{-D_K})^{\frac{\kappa}{2}-1}}\cdot \lambda_N(g)^{-1}W_N({z}_{g})$.
\item[iii)] ${\frak{z}}_{g^\lambda}={\left(1-\dfrac{p^{\frac{\kappa}{2}-1}}{\lambda_g}\right)^2}\dfrac{u_K^{-1}}{(2\sqrt{-D_K})^{\frac{\kappa}{2}-1}}\cdot\dfrac{W_{Np}\circ({\rm pr}^\lambda)^*(z_{g})}{\lambda_g\,\lambda_N(g)\cE(g^\lambda)\cE^*(g^\lambda)}$\,.
\end{lemma}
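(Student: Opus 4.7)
The plan is to prove the three assertions in order, with (ii) following from (i) by corestriction and the $p$-stabilization recipe, and (iii) obtained from (ii) by inverting $({\rm pr}^\lambda)_*$ via the duality relation of Lemma~\ref{lemma_KLZ_2_Prop_10_1_1_last_par}.

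For Part (i), the starting observation is that both $z_{g,\mathds{1},c_0}$ and $z_{g,c_0}$ are constructed from the same CM data (the CM elliptic curve $A/K_{c_0}$ with $A[\fN]$-level structure) applied to the Kuga--Sato variety, but the BDP cycles $z_{g,\chi,c_0}$ naturally take values in the realization $V_g=V_g(\mathscr{H}_{\ZZ_p})$, whereas the classical Heegner cycle $z_{g,c_0}$ of Nekov\'a\v{r} is defined with coefficients in the homological realization $V_g^*(1)=V_g(\mathscr{H}_{\ZZ_p}^\vee)$. The comparison is then a matter of explicitly tracking the difference between the two incarnations, which is implemented by the Atkin--Lehner involution $W_N$ (suitably normalized by $\lambda_N(g)^{-1}$); the scaling factor $(2c_0\sqrt{-D_K})^{\frac{\kappa}{2}-1}$ records the degree-theoretic normalization of the isogeny $\phi\colon A\to A/A[\fN]$ used in the BDP construction, together with the standard ``$t^{\kappa/2-1}$'' twisting factor relating the de Rham pairing with the Weil pairing on $T_p(A)$. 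The cleanest route is to import the comparison directly from \cite[Prop.~3.24]{bertolinidarmonprasanna13} (or the variant in \cite[\S4.3--4.4]{CastellaHsiehGHC} which is already phrased in terms of $z_{g,\chi,c}$).

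Part (ii) is then a formal consequence. Specializing Lemma~\ref{lemma_compare_GHCs_to_deformed_GHCs} at $n=0$ and at the trivial character $\chi=\mathds{1}$ (so $c_0=1$) yields $\bz_{g,1,\lambda_g}^{\mathds{1}}=\cE_p(g^\lambda,1)\,z_{g,\mathds{1},1}$. Substituting (i) with $c_0=1$ and corestricting from $K_1$ down to $K$ turns $W_N(z_{g,1})$ into $W_N(z_g)$. Finally, for $\chi=\mathds{1}$ the Frobenii $\sigma_\p,\sigma_{\p^c}$ act trivially, so $\cE_p(g^\lambda,1)$ simplifies to $\bigl(1-p^{\frac{\kappa}{2}-1}/\lambda_g\bigr)^{2}/u_K$ (after identifying the relevant normalization of $|\cO_K^\times|$ and $u_K=\#\cO_K^\times/2$ as in Definition~\ref{def_various_Heeg_j_stabilized}(iv)); combining these factors gives the stated identity.

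Part (iii) is extracted from (ii) by inverting the map $({\rm pr}^\lambda)_*$ appearing in Definition~\ref{defn_pstabilizedGHC_levelNp}. By that definition, ${\frak{z}}_{g^\lambda}$ is the unique class satisfying $({\rm pr}^\lambda)_*({\frak{z}}_{g^\lambda})=\bz_{g,\lambda_g}^{\mathds{1}}$. Lemma~\ref{lemma_KLZ_2_Prop_10_1_1_last_par} shows that, after identifying the two realizations via the Atkin--Lehner operators $W_N$ and $W_{Np}$, the composite $({\rm pr}^\lambda)_*\circ ({\rm pr}^\lambda)^*$ equals the scalar $\lambda_g\lambda_N(g)\cE(g^\lambda)\cE^*(g^\lambda)$ on the $g$-isotypic component. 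Thus, substituting the formula for $\bz_{g,\lambda_g}^{\mathds{1}}$ from (ii) and reading off the unique preimage under $({\rm pr}^\lambda)_*$, the Atkin--Lehner intertwining $W_{Np}\circ({\rm pr}^\lambda)^*=({\rm pr}^\lambda)^*\circ W_N$ (which is the level-$N$/level-$Np$ compatibility recorded in Section~\ref{subsubsec_poincare}) produces the stated expression, with the denominator $\lambda_g\lambda_N(g)\cE(g^\lambda)\cE^*(g^\lambda)$ coming precisely from the duality scalar.

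The main obstacle is Part (i): it requires unwinding the different normalizations of BDP's construction and Nekov\'a\v{r}'s to pin down the exact power of $2c_0\sqrt{-D_K}$ and the Atkin--Lehner pseudo-eigenvalue $\lambda_N(g)^{-1}$. Once (i) is in hand, the remaining steps are formal manipulations with Lemma~\ref{lemma_compare_GHCs_to_deformed_GHCs} and Lemma~\ref{lemma_KLZ_2_Prop_10_1_1_last_par}.
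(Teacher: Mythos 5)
Your overall strategy matches the paper's: part (i) is imported from Bertolini--Darmon--Prasanna (the paper cites \cite[Proposition~4.1.2]{bertolinidarmonprasanna13} together with \cite[Theorem~6.5]{CastellapadicvariationofHeegnerpoints}), part (ii) follows from (i) plus the $p$-stabilization recipe, and part (iii) is obtained by inverting $({\rm pr}^\lambda)_*$ using the duality scalar from \cite[Proposition~10.1.1]{KLZ2}.

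However, the execution of (iii) rests on two auxiliary identities that are false. First, the claimed intertwining $W_{Np}\circ({\rm pr}^\lambda)^*=({\rm pr}^\lambda)^*\circ W_N$ is wrong: the actual relations are $W_{Np}\circ ({\rm pr}_1)^*=({\rm pr}_2)^*\circ W_N$ and $W_{Np}\circ ({\rm pr}_2)^*=p^{\kappa-2}({\rm pr}_1)^*\circ W_N$ (these are used later in the paper's proof of Lemma~\ref{lemma_comparison_with_ota_1}), so $W_{Np}\circ({\rm pr}^\lambda)^*=\bigl({\rm pr}_2^*-\tfrac{p^{\kappa-2}}{\lambda_g}{\rm pr}_1^*\bigr)\circ W_N$, which is \emph{not} $({\rm pr}^\lambda)^*\circ W_N$. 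Second, $({\rm pr}^\lambda)_*\circ ({\rm pr}^\lambda)^*$ is not the scalar $\lambda_g\lambda_N(g)\cE(g^\lambda)\cE^*(g^\lambda)$: that composite only involves the $p$-degeneracy maps and so cannot produce the Atkin--Lehner pseudo-eigenvalue $\lambda_N(g)$ (a direct computation in terms of $T_p$ and $\deg=p+1$ gives $(p-1)\cE(g^\lambda)$). The scalar $\lambda_g\lambda_N(g)\cE(g^\lambda)\cE^*(g^\lambda)$ is attached to the different composite $({\rm pr}^\lambda)_*\circ W_{Np}\circ({\rm pr}^\lambda)^*\circ\lambda_N(g)^{-1}W_N\colon V_g\to V_g$ — this, with the Atkin--Lehner operators threaded in, is precisely the content of the last paragraph of \cite[Proposition~10.1.1]{KLZ2} that the paper invokes, and is what makes the denominator in (iii) come out right. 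As written, your two claims even combine to produce an extra spurious factor of $\lambda_N(g)$ in the final answer, which is a signal that the identities do not hold as stated.
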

\begin{proof}
The first assertion follows as a consequence of \cite[Proposition 4.1.2]{bertolinidarmonprasanna13}, where the authors compare generalized Heegner cycles and classical Heegner cycles (see also the proof of Theorem~6.5 in \cite{CastellapadicvariationofHeegnerpoints}). The second assertion follows from (i) and the definition of the $p$-stabilized generalized Heegner cycles (where we recall that we have taken $c_0=1$ and $\mathds{1}$ is the trivial character).
We now prove the final assertion. The last paragraph of the proof of \cite[Proposition 10.1.1]{KLZ2} that the composition
$$V_g\xrightarrow{\lambda_N(g)^{-1}W_N}V_g^*(1)\xrightarrow{({\rm pr}^\lambda)_*\circ W_{Np}\circ({\rm pr}^\lambda)^*} V_g$$ 
is the multiplication by $\lambda_g\,\lambda_N(g)\cE(g^\lambda)\cE^*(g^\lambda)$ map. This shows that 
$$({\rm pr}^\lambda)_*\left( \dfrac{W_{Np}\circ({\rm pr}^\lambda)^*(z_{g})}{\lambda_g\,\lambda_N(g)\cE(g^\lambda)\cE^*(g^\lambda)}\right)=\lambda_N(g)^{-1}W_N(z_g)\,.$$
Hence,
\begin{align*}
   ({\rm pr}^\lambda)_*\left(\left(1-\dfrac{p^{\frac{\kappa}{2}-1}}{\lambda_g}\right)^2 \dfrac{u_K^{-1}}{(2\sqrt{-D_K})^{\frac{\kappa}{2}-1}} \dfrac{W_{Np}\circ({\rm pr}^\lambda)^*(z_{g})}{\lambda_g\,\lambda_N(g)\cE(g^\lambda)\cE^*(g^\lambda)}\right)&=\dfrac{u_K^{-1}}{(2\sqrt{-D_K})^{\frac{\kappa}{2}-1}}\left(1-\dfrac{p^{\frac{\kappa}{2}-1}}{\lambda_g}\right)^2 {\lambda_N(g)^{-1}W_N({z}_{g})} \\
   &={\bf z}^{\mathds{1}}_{g,\lambda_g}\,.
\end{align*}
The proof from the defining property of the class $\frak{z}_{g^\lambda}$ in Definition~\ref{defn_pstabilizedGHC_levelNp}.
\end{proof}
}

{The following statement is closely related to \cite[Theorem 1.2]{ota2020}, especially when combined with Theorem~\ref{thm_main_GHC_in_families_with_ac}(i) below.}
{\begin{lemma}
\label{lemma_comparison_with_ota_1} 
$({\rm pr}_1)_*(\frak{z}_{g^\lambda})={\lambda_g}({\lambda_g-\lambda_g^{-1}p^{\kappa-1}})^{-1} \, {\bf z}^{\mathds{1}}_{g,\lambda_g}$.
\end{lemma}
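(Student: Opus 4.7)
The plan is to exploit that $V_g$ is one-dimensional over $L$ (as the $g$-isotypic quotient of the level-$N$ cohomology), so the two pushforwards $({\rm pr}_1)_*,({\rm pr}_2)_*\colon V_{g^\lambda}\to V_g$ from the one-dimensional $g^\lambda$-isotypic quotient must be proportional: there is a unique scalar $\mu\in L$ with $({\rm pr}_2)_*=\mu\cdot({\rm pr}_1)_*$ on $V_{g^\lambda}$. Combining this with the defining property $({\rm pr}^\lambda)_*(\frak{z}_{g^\lambda})=\bz^{\mathds{1}}_{g,\lambda_g}$ from Definition~\ref{defn_pstabilizedGHC_levelNp} and the evident form $({\rm pr}^\lambda)_*=({\rm pr}_1)_*-\lambda_g^{-1}({\rm pr}_2)_*$ (which is the Poincar\'e adjoint of $({\rm pr}^\lambda_\vee)^*={\rm pr}_1^*-\lambda_g^{-1}{\rm pr}_2^*$) yields
\[
\bz^{\mathds{1}}_{g,\lambda_g}\,=\,\bigl(1-\mu/\lambda_g\bigr)\cdot({\rm pr}_1)_*(\frak{z}_{g^\lambda}),
\]
so the lemma reduces to verifying $\mu=\lambda_g^{-1}p^{\kappa-1}$.

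To determine $\mu$, I would invoke the Hecke-compatibility identity
\[
({\rm pr}_1)_*\circ U_p\,=\,T_p\circ({\rm pr}_1)_*-({\rm pr}_2)_*
\]
on $H^1(Y_0(Np),{\rm TSym}^{\kappa-2}\mathscr{H}_{\ZZ_p})$, which is the Poincar\'e dual of the standard identity $U_p\circ{\rm pr}_1^*={\rm pr}_1^*\circ T_p-{\rm pr}_2^*$. The coefficient normalization adopted in the paper is such that the isogeny twist associated with ${\rm pr}_2$ on ${\rm TSym}^{\kappa-2}$ is absorbed into ${\rm pr}_2^*$, so no explicit power of $p$ appears on the right-hand side. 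Evaluating both sides on $\frak{z}_{g^\lambda}$, using $U_p\,\frak{z}_{g^\lambda}=\lambda_g\,\frak{z}_{g^\lambda}$ together with the fact that $T_p$ acts on $V_g$ by $a_p(g)=\lambda_g+\lambda_g^{-1}p^{\kappa-1}$, gives
\[
({\rm pr}_2)_*(\frak{z}_{g^\lambda})\,=\,\bigl(a_p(g)-\lambda_g\bigr)\cdot({\rm pr}_1)_*(\frak{z}_{g^\lambda})\,=\,\lambda_g^{-1}p^{\kappa-1}\cdot({\rm pr}_1)_*(\frak{z}_{g^\lambda}),
\]
so $\mu=\lambda_g^{-1}p^{\kappa-1}$ as required.

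Substituting $\mu=\lambda_g^{-1}p^{\kappa-1}$ into the first displayed equation and solving for $({\rm pr}_1)_*(\frak{z}_{g^\lambda})$ recovers the claimed formula. I expect the only nontrivial point to be the careful verification of the Hecke-compatibility identity in the paper's convention, which amounts to tracking the isogeny twist associated with ${\rm pr}_2$ on ${\rm TSym}^{\kappa-2}\mathscr{H}_{\ZZ_p}$; once this standard cohomological identity is in hand, the rest of the argument is a short linear-algebraic computation in a two-dimensional space. As a purely algebraic alternative, one could instead substitute the explicit expression for $\frak{z}_{g^\lambda}$ from Lemma~\ref{lemma_characterize_pstab}(iii), use the factorization $W_{Np}=W_N\cdot W_p$ together with the commutations $({\rm pr}_1)_*\circ W_p=({\rm pr}_2)_*$ and $({\rm pr}_1)_*\circ W_N=W_N\circ({\rm pr}_1)_*$, and reduce to computing $({\rm pr}_2)_*\circ({\rm pr}^\lambda)^*(z_g)$ via the standard pairings $({\rm pr}_i)_*\circ{\rm pr}_j^*$, comparing the outcome with Lemma~\ref{lemma_characterize_pstab}(ii).
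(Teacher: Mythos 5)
Your proposal is correct in substance and arrives at the same answer $\cE^*(g^\lambda)^{-1}\bz_{g,\lambda_g}^{\mathds{1}}$, but by a genuinely different route than the paper's own proof. The paper substitutes the explicit BDP comparison formula for $\frak{z}_{g^\lambda}$ from Lemma~\ref{lemma_characterize_pstab}(iii), pushes it through the Atkin--Lehner commutation identities $W_{Np}\circ ({\rm pr}_1)^*=({\rm pr}_2)^*\circ W_N$ and $W_{Np}\circ ({\rm pr}_2)^*=p^{\kappa-2}({\rm pr}_1)^*\circ W_N$, and then evaluates $({\rm pr}_1)_*\circ({\rm pr}_2)^*=T_p'$ and $({\rm pr}_1)_*\circ({\rm pr}_1)^*=p+1$ on the $g$-isotypic quotient. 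Your main argument avoids the Atkin--Lehner operators altogether: it needs only the defining property $({\rm pr}^\lambda)_*(\frak{z}_{g^\lambda})=\bz^{\mathds{1}}_{g,\lambda_g}$, the fact that $\frak{z}_{g^\lambda}$ is an eigenvector for the dual $U_p$-operator, and the degeneracy/Hecke compatibility between levels $N$ and $Np$. This buys a shorter, more self-contained computation; the price is that you must get the normalization of the level-raising Hecke identity exactly right, whereas the paper leans on identities already recorded in \cite[Proposition~10.1.1]{KLZ2}. Your ``purely algebraic alternative'' in the closing paragraph essentially reproduces the paper's proof.

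One misstatement is worth correcting. $V_g$ is Deligne's two-dimensional Galois representation, not a one-dimensional space (and similarly for $V_{g^\lambda}$), so the opening sentence gives the wrong reason for the proportionality $({\rm pr}_2)_*=\mu\cdot({\rm pr}_1)_*$. The proportionality does hold, but as a consequence of Schur's lemma: $V_{g^\lambda}\cong V_g$ are absolutely irreducible (under \ref{item_BI}), so $\Hom_{G_\QQ}(V_{g^\lambda},V_g)$ is one-dimensional over $L$. In any case this map-level claim is unnecessary for your argument, since the Hecke-compatibility identity already determines $({\rm pr}_2)_*(\frak{z}_{g^\lambda})$ in terms of $({\rm pr}_1)_*(\frak{z}_{g^\lambda})$ directly. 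You should also phrase the Hecke identity with the dual operators $U_p'$ and $T_p'$, since those are the operators acting by $\lambda_g$ and $a_p(g)$ on the ${\rm TSym}$-coefficient realizations used here, as the paper itself notes in this proof (``the dual Hecke operator $T_p'$'').
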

\begin{proof}
By Lemma~\ref{lemma_characterize_pstab}(i),
\begin{equation}
\label{eqn_Ota_reduction_1}
({\rm pr}_1)_*(\frak{z}_{g^\lambda})={\left(1-\dfrac{p^{\frac{\kappa}{2}-1}}{\lambda_g}\right)^2}\dfrac{u_K^{-1}}{(2\sqrt{-D_K})^{\frac{\kappa}{2}-1}}\cdot\dfrac{({\rm pr}_1)_*\circ W_{Np}\circ({\rm pr}^\lambda)^*(z_{g})}{\lambda_g\,\lambda_N(g)\cE(g^\lambda)\cE^*(g^\lambda)}\,.
\end{equation}
As noted in \cite{KLZ2} (the last two lines on Page 92), we have the identities
$$W_{Np}\circ ({\rm pr}_1)^*=({\rm pr}_2)^*\circ W_N \hbox{\,\,\,\, and \,\,\,\,}  W_{Np}\circ ({\rm pr}_2)^*=p^{\kappa-2}({\rm pr}_1)^*\circ W_N\,.$$
This shows that
$$({\rm pr}_1)_*\circ W_{Np}\circ ({\rm pr}^\lambda)^*=\left(({\rm pr}_1)_*\circ ({\rm pr}_2)^*-\frac{p^{\kappa-2}}{\lambda_g}({\rm pr}_1)_*\circ ({\rm pr}_1)^*\right)\circ W_N\,.$$
Since $({\rm pr}_1)_*\circ ({\rm pr}_2)^*$ is the dual Hecke operator $T_p^\prime$, it acts on the $g$-isotypic quotient $V_g$ by multiplication by $a_p(g)=\lambda_g+\dfrac{p^{\kappa-1}}{\lambda_g}$; whereas $({\rm pr}_1)_*\circ ({\rm pr}_1)^*$ acts as multiplication by $p+1$. This shows that 
\begin{align*}
({\rm pr}_1)_*\circ W_{Np}\circ ({\rm pr}^\lambda)^*\left(z_g\right)&=\left(\lambda_g+\dfrac{p^{\kappa-1}}{\lambda_g}-\dfrac{p^{\kappa-2}(p+1)}{\lambda_g}\right)W_N(z_g)\\
&=\left(\lambda_g-\dfrac{p^{\kappa-2}}{\lambda_g} \right)W_N(z_g)=\lambda_g\,\cE(g^\lambda)\,W_N(z_g)\,.
\end{align*}
On plugging this computation in \eqref{eqn_Ota_reduction_1}, we see that
\begin{align*}
({\rm pr}_1)_*(\frak{z}_{g^\lambda})&={\left(1-\dfrac{p^{\frac{\kappa}{2}-1}}{\lambda_g}\right)^2}\dfrac{u_K^{-1}}{(2\sqrt{-D_K})^{\frac{\kappa}{2}-1}}\cdot\dfrac{\lambda_g\,\cE(g^\lambda)\,W_N(z_g)}{\lambda_g\,\lambda_N(g)\cE(g^\lambda)\cE^*(g^\lambda)}\\
&=\cE^*(g^\lambda)^{-1}{\left(1-\dfrac{p^{\frac{\kappa}{2}-1}}{\lambda_g}\right)^2}\dfrac{u_K^{-1}}{(2\sqrt{-D_K})^{\frac{\kappa}{2}-1}}\,\lambda_N^{-1}W_N(z_g)\\
&=\cE^*(g^\lambda)^{-1} {\bf z}^{\mathds{1}}_{g,\lambda_g}
\end{align*}
where the final equality is Lemma~\ref{lemma_characterize_pstab}. This completes the proof, since
$$\cE^*(g^\lambda)^{-1}=\left(1-\dfrac{p^{\kappa-1}}{\lambda_g^2} \right)=\lambda_g({\lambda_g-\lambda_g^{-1}p^{\kappa-1}})^{-1}\,.$$
\end{proof}}

%%%%%%%%%%%%%%%%%%%%%%%%%%%%%%%%%%%%%%%%%%%%%%%%%%%%%%%%%%%%%%%%%%%%%%%%%%%%%%%%%%%%%%%%%%%%%%%%%%%%%%%%%%%%%%%%%%%%%%%%%%%%%%%%%%%%%%%%%%%%%%%%%%%%%%%%%%%%%%%%%%%%%%%%%%%%%%%%%%%%%%%%%%%%%%%%%%%%%%%%%%%%%%%%%%%%%%%%%%%%%%%%%%%%%%%%%%%%%%%%%%%%%%%%%%%%%%%%%%%%%%%%%%%%%%%%%%%%%%%%%%%%%%%%%%%%%%%%%%%%%%%%%%%%%%%%%%%%%%%%%%%%%%%%%%
%%%%%%%%%%%%%%%%%%%%%%%%%%%%%%%%%%%%%%%%%%%%%%%%%%%%%%%%%%%%%%%%%%%%%%%%%%%%%%%%%%%%%%%%%%%%%%%%%%%%%%%%%%%%%%%%%%%%%%%%%%%%%%%%%%%%%%%%%%%%%%%%%%%%%%%%%%%%%%%%%%%%%%%%%%%%%%%%%%%%%%%%%%%%%%%%%%%%%%%%%%%%%%%%%%%%%%%%%%%%%%%%%%%%%%%%%%%%%%%%%%%%%%%%%%%%%%%%%%%%%%%%%%%%%%%%%%%%%%%%%%%%%%%%%%%%%%%%%%%%%%%%%%%%%%%%%%%%%%%%%%%%%%%%%%

%%%%%%%%%%%%%%%%%%%%%%%%%%%%%%%%%%%%%%%%%%%%%%%%%%%%%%%
%%%%%%%%%%%%%%%%%%%%%%%%%%%%%%%%%%%%%%%%%%%%%%%%%%%%%%%
%%%%%%%%%%%%%%%%%%%%%%%%%%%%%%%%%%%%%%%%%%%%%%%%%%%%%%%
%%%%%%%%%%%%%%%%%%%%%%%%%%%%%%%%%%%%%%%%%%%%%%%%%%%%%%%

\section{Interpolation of generalized Heegner cycles}
\label{sec_GHC_interpolated}
We start with the definition of a Selmer complex which we will shall make use of in our argument in Section~\ref{subsec_GHC_interpolated}.
\begin{defn}
\label{defn_selmer_complex}
\item[i)] For any complete local Noetherian ring $R$, a free $R$-module $X$ of finite rank which is endowed with a continuous action of $G_{K,\Sigma}$, and integer $c_0$ coprime to $p$ and all primes of $K$ in the set $\Sigma$, we consider the Selmer complex 
$$\widetilde{{\bf R}\Gamma}_{\rm f}(G_{K_{c_0},\Sigma},X;\Delta_X)\in D_{\rm ft} (_{R}{\rm Mod})$$
with local conditions $\Delta_X$, given as in \cite[\S6.1]{nekovar06}. We denote its cohomology by $\widetilde{H}^\bullet(G_{K_{c_0},\Sigma},X;\Delta_X)$.

\item[ii)]We shall write $\Delta(\p,\p^c)$ for local conditions which are unramified for all primes in $\Sigma$ that are coprime to $p$ $($see \cite[\S8]{nekovar06} for details$)$ and which are given by the Greenberg conditions $($see \cite[\S6.7]{nekovar06}$)$ with the choices
$$j_{\p^c}^+:X\stackrel{=}{\longrightarrow} X$$
$$j_{\p}^+:\{0\}\hookrightarrow X$$
at the primes of $K_{c_0}$ above $\p$ and $\p^c$, respectively.
\end{defn}

\subsection{Interpolation of Generalised Heegner Cycles}
\label{subsec_GHC_interpolated}
Section~\ref{subsec_GHC_interpolated} is dedicated to the proof of Theorem~\ref{thm_main_GHCinterpolate} below, which is the main  result of this article. It asserts that the generalized Heegner cycles of Bertolini--Darmon--Prasanna (which we recalled in Section~\ref{sec:GHC}) interpolate along the Coleman family $\f$  to a distribution valued cohomology class. We retain our notation from Section~\ref{subsec_Coleman_families_revisited} concerning the Coleman family $\f$ and various objects associated to it.

We start with an auxiliary lemma. We recall that $P_\kappa$ is a fixed generator of $\ker(\pi_\kappa)$ for any $\kappa\in B^\circ(r_0)_{\rm cl}$. { We put $(P_\kappa):=\LL_{k,r_0}[1/p]\cdot P_\kappa$\,.}

\begin{lemma}
\label{lemma_amice_velu_in_families}
${\displaystyle \bigcap_{\kappa\in B^\circ(r_0)_{\rm cl}}\,(P_\kappa)\,\widehat{\otimes}_{ \QQ_p}\,\mathcal{H}_{v(\lambda)}(\widetilde{\Gamma}_\ac)_L=0\,.}$
\end{lemma}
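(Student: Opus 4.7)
The plan is to interpret elements of $\LL_{(k,r_0)}[1/p]\,\widehat{\otimes}_{\QQ_p}\,\mathcal{H}_{v(\lambda)}(\widetilde{\Gamma}_\ac)_L$ as $\mathcal{H}_{v(\lambda)}(\widetilde{\Gamma}_\ac)_L$-valued analytic functions on the weight disc $B^\circ(r_0)$, and then conclude by an identity principle for $p$-adic analytic functions with infinitely many zeros. Concretely, I would first fix the coordinate $T := (X-k)/e_0$, so that $\LL_{(k,r_0)} \cong \cO_L[[T]]$, which is a UFD. For each classical $\kappa \in B^\circ(r_0)_{\rm cl}$, the prime $\ker(\pi_\kappa)$ is of height one with an irreducible generator $P_\kappa$; after inverting $p$, we may take $P_\kappa = T - t_\kappa$ up to a unit, where $t_\kappa := \pi_\kappa(T) \in L$ lies in the open $T$-disc. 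The key geometric input is that the set $\{t_\kappa : \kappa \in B^\circ(r_0)_{\rm cl}\}$ is infinite.

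Second, I would reduce to the case $\widetilde{\Gamma}_\ac = \Gamma_\ac$ by decomposing
\[
\mathcal{H}_{v(\lambda)}(\widetilde{\Gamma}_\ac)_L = \bigoplus_{\chi \in \widehat{\Delta}_\ac} e_\chi \cdot \mathcal{H}_{v(\lambda)}(\Gamma_\ac)_L,
\]
and working one isotypic component at a time. Fixing a topological generator $\gamma_\ac$ of $\Gamma_\ac$ and writing $S := \gamma_\ac - 1$, the Amice transform identifies $\mathcal{H}_{v(\lambda)}(\Gamma_\ac)_L$ with a subspace of $L[[S]]$. I would then express an element $F$ of the completed tensor product as a formal double series $F = \sum_{n \geq 0} F_n(T)\, S^n$ with $F_n \in \LL_{(k,r_0)}[1/p]$, subject to the growth condition inherited from the definition of $\cH_{v(\lambda)}$.

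Third, under this description the hypothesis $F \in \bigcap_\kappa (P_\kappa)\,\widehat{\otimes}_{\QQ_p}\,\mathcal{H}_{v(\lambda)}(\widetilde{\Gamma}_\ac)_L$ translates into the vanishing of $F(T=t_\kappa)$ in $\mathcal{H}_{v(\lambda)}(\Gamma_\ac)_L$ for each classical $\kappa$. Comparing the $S^n$-coefficients yields $F_n(t_\kappa) = 0$ for every $n$ and every classical $\kappa$. Each $F_n$ is a nonzero bounded analytic function on the open $T$-disc, but by $p$-adic Weierstrass preparation applied to $\cO_L[[T]]$ (a complete DVR power series ring), any such function has only finitely many zeros in the open disc. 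Since the $\{t_\kappa\}$ form an infinite set, every $F_n$ must vanish, and hence $F = 0$.

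The main obstacle lies in setting up Step~2 rigorously: one must check that the completed tensor product over $\QQ_p$ is really realized by such double series with well-behaved growth, and that the specialization $\pi_\kappa \otimes \id$ coincides with evaluation of the $T$-variable at $t_\kappa$. Once these formal matters are in place, the Weierstrass-preparation step is routine.
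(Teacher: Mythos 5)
Your proposal is essentially the paper's own proof: view elements of the completed tensor product as two-variable distributions, extract coefficients in the anticyclotomic variable, and invoke the finiteness of zeros of a nonzero bounded $p$-adic analytic function on the open disc (Weierstrass preparation over $\cO_L[[T]]$). That said, the obstacle you flag at the end --- justifying the ``double-series'' description of $\LL_{(k,r_0)}[1/p]\,\widehat{\otimes}_{\QQ_p}\,\cH_{v(\lambda)}(\Gamma_\ac)_L$ and the compatibility of coefficient extraction with the evaluation $\pi_\kappa$ --- is not a routine formality but the substance of the argument, and the paper devotes most of the proof to it. It identifies $\LL_{(k,r_0)}[1/p]$ with $\cH_0(\mathscr{Z})_L$ for $\mathscr{Z}=k+r_0\Zp$, so that the completed tensor product becomes the two-variable distribution module $\cH_{(0,v(\lambda))}(\mathscr{Z}\times\Gamma_\ac)_L$, and then uses the Banach-space isomorphism $C^{(0,v(\lambda))}(\mathscr{Z}\times\Gamma_\ac,L)\cong C^0(\mathscr{Z},L)\,\widehat{\otimes}_L\,C^{v(\lambda)}(\Gamma_\ac,L)$ from \cite[Remark 6]{LoefflerHeidelberg2012} together with Colmez's explicit orthonormal bases (the characteristic functions $\mathds{1}_{\mathscr{Z}_i}$ for the weight factor and the $e_{j,m,v(\lambda)}$ for the anticyclotomic factor) to produce, for each basis function of the $\Gamma_\ac$-side, a well-defined measure on $\mathscr{Z}$. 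These measures are the rigorous analogues of your coefficients $F_n$, and it is only after this identification that one can legitimately assert $F_n(t_\kappa)=0$ for all $\kappa$ and run the finitude-of-zeros argument. So the route you describe is the correct one, but until that Banach-space step is supplied, the reduction ``comparing $S^n$-coefficients yields $F_n(t_\kappa)=0$'' is not yet a proof.
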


\begin{proof}
It is easy to see that it suffices to check 
$$\bigcap_{\kappa\in B^\circ(r_0)_{\rm cl}}\,(P_\kappa)\,\widehat{\otimes}_{ \QQ_p}\,\mathcal{H}_{v(\lambda)}(\Gamma)_L=0,$$
where $\Gamma$ is any  topological group  isomorphic to $\ZZ_p$ and 
 $\cH_{v(\lambda)}(\Gamma)_L$ is defined similarly to $\cH_{v(\lambda)}(\Gamma_\ac)_L$.

Let us set $\mathscr{Z}:=k+r_0\ZZ_p$ and write $\iota_{k,r_0}:\mathscr{Z}\stackrel{\sim}{\lra} \ZZ_p$ for the evident  {homeomorphism of topological spaces, which also allows us to consider $\mathscr{Z}$ as an additive topological group}. We then have $\LL_{k,r_0}=\cO_L[[\mathscr{Z}]]$ and  $\LL_{k,r_0}[1/p]=\mathcal{H}_{0}(\mathscr{Z})_L$.

For any non-negative real number $h\ge0$, let us set $C^h(\Gamma,L)$ to denote the Banach space of \emph{order-$h$ $L$-valued functions} on $\Gamma$ as given as in \cite[\S1.5]{Colmez2010Ast}. We define $D^h(\Gamma,L)$, the space of order-$h$ distributions as the continuous dual of $C^h(\Gamma,L)$. For each $h$, Amice transform induces an isomorphism
\begin{align*}
\mathcal{A}:D^h(\Gamma,L)&\stackrel{\sim}{\lra} \cH_{h}(\Gamma)_L\\
\mu&\longmapsto \mathcal{A}_\mu,
\end{align*}

Let
$$\mathcal{A}_\mu\in \bigcap_{\kappa}\,(P_\kappa)\,\widehat{\otimes}_{ \QQ_p}\,\mathcal{H}_{v(\lambda)}({\Gamma})_L \subset \mathcal{H}_{0}(\mathscr{Z})_L\,\widehat{\otimes}\,\mathcal{H}_{v(\lambda)}(\Gamma)_L=:\mathcal{H}_{(0,v(\lambda))}(\mathscr{Z}\times\Gamma)_L$$ 
be any element. Here, $\mathcal{H}_{(0,v(\lambda))}(\mathscr{Z}\times\Gamma)_L$ is a Banach space isomorphic to the space ${D}^{(0,v(\lambda))}(\mathscr{Z}\times \Gamma,L)$ introduced in \cite[Definition 5]{LoefflerHeidelberg2012} via the Amice transform, which is the continuous dual of the Banach space $C^{(0,v(\lambda))}(\mathscr{Z}\times \Gamma,L)$, which is also introduced in \cite[Definition 5]{LoefflerHeidelberg2012}.

 According to Remark~6 in op. cit., the natural containment $$C^{0}(\mathscr{Z},L)\,\widehat{\otimes}_{{L}}\,C^{(v(\lambda))}(\Gamma,L)\subset C^{(0,v(\lambda))}(\mathscr{Z}\times \Gamma,L)$$ 
 gives rise to an identification
\begin{equation}
\label{eqn_decompose_Banach}
C^{(0,v(\lambda))}(\mathscr{Z}\times \Gamma,L)\cong C^{0}(\mathscr{Z},L)\,\widehat{\otimes}_{{L}}\,C^{(v(\lambda))}(\Gamma,L)
\end{equation}
of Banach spaces. For each positive integer $i$, consider the characteristic function
$\mathds{1}_{\mathscr{Z}_i}$ of the subset $\mathscr{Z}_i:=\iota_{k,r_0}^{-1}(i+p^{\ell(i)}\ZZ_p)\subset \mathscr{Z}$, where $\ell(i)$ is as defined in \cite[\S1.3]{Colmez2010Ast}. By Theorem {I.5.14} of op. cit, these form a Banach basis of $C^{0}(\mathscr{Z},L)$. More generally, the collection 
$$\{e_{j,m,v(\lambda)}: j\in \ZZ_{\geq 0} \hbox{ and } 0\leq m\leq v(\lambda)\} \subset C^{v(\lambda)}(\Gamma,L)$$ 
given as in {the proof of Proposition I.5.8 in} op. cit. form a Banach basis  of $C^{v(\lambda)}(\Gamma,L)$, and by the isomorphism \eqref{eqn_decompose_Banach}, the collection
$\{\mathds{1}_{\mathscr{Z}_i}\,{\otimes}\,e_{j,m,v(\lambda)}: i,j\in \ZZ_{\geq0}, 0\leq m\leq v(\lambda)\}$ is a Banach basis of the space $C^{(0,v(\lambda))}(\mathscr{Z}\times \Gamma,L)$. In order to prove the lemma, we need to check that
\begin{equation}\label{eqn_question_mark}
\mu\left(\mathds{1}_{\mathscr{Z}_i}\,{\otimes}\,e_{j,m,v(\lambda)}\right)=0
\end{equation}
for any given $ i,j\in \ZZ_{\geq0}$ and  $0\leq m\leq v(\lambda)$, under our running assumption that  $\pi_\kappa(\mathcal{A}_\mu)=0$ for all crystalline points in $B(r)$ with $r<r_0$. Let us fix $j\in \ZZ_{\geq 0}$ and an integer $0\leq m\leq v(\lambda)$. Let us also consider the distribution $\mu_{j,m}\in D^0(\mathscr{Z},L)$ given by
$$\mu_{j,m}(f)=\mu(f\otimes e_{j,m,v(\lambda)})\,\,,\,\,\, f\in C^{0}(\mathscr{Z},L)$$ 
and set $\mathcal{A}_{\mu,j,m} \in \mathcal{H}_{L,0}(\mathscr{Z})$ its Amice transform. Our assumption that $\pi_\kappa(\mathcal{A}_\mu)=0$  implies that $\pi_\kappa(\mathcal{A}_{\mu,j,m})=0$, and in turn (by the infinitude of crystalline points and the fact that $\mathcal{A}_{\mu,j,m}$ is an Iwasawa function) that $\mathcal{A}_{\mu,j,m}=0$. Since the Amice transform 
\begin{align*}\mathcal{A}: D^0(\ZZ_p,L)&\stackrel{\sim}{\lra} \cH_{0}(\Zp)_L\\
\mu_{j,m}&\longmapsto \mathcal{A}_{\mu,j,m}
\end{align*}
is an isomorphism, it follows that $\mu_{j,m}=0$ for every $j\in \ZZ_{\geq0}$ and integers $0\leq m\leq v(\lambda)$. This in turn implies that \eqref{eqn_question_mark} holds for for any given $ i,j\in \ZZ_{\geq0}$ and  $0\leq m\leq v(\lambda)$, as required.
%Given $i\in \ZZ_{\geq 0}$, choose a crystalline point $\kappa$ which is contained in $\mathscr{Z}_i$. This is possible since crystalline $\kappa$ are dense. Then,
%\begin{align*}
%\mu\left(\mathds{1}_{\mathscr{Z}_i}\,{\otimes}\,e_{j,m,v(\lambda)}\right)=\mu\left(\mathds{1}_{\mathscr{Z}_i}(\kappa_i)\,{\otimes}\,e_{j,m,v(\lambda)}\right)+\mu\left(\mathds{1}_{\mathscr{Z}_i}(\kappa_i^\prime)\,{\otimes}\,e_{j,m,v(\lambda)}\right)=\pi_\kappa(\mu)(e_{j,m,v}(\lambda))=0,
%\end{align*}
\end{proof}
We will also need the following version of Lemma~\ref{lemma_amice_velu_in_families} (which is a corollary to the proof of Lemma~\ref{lemma_amice_velu_in_families}) that concerns $\sW$-valued distributions.
{\begin{corollary}
\label{cor_lemma_amice_velu_in_families}
${\displaystyle \bigcap_{\kappa\in B^\circ(r_0)_{\rm cl}}\,(P_\kappa)\,\widehat{\otimes}_{ \QQ_p}\,\mathcal{H}_{v(\lambda)}(\widetilde{\Gamma}_\ac)_L\otimes{\sW}=0\,.}$
\end{corollary}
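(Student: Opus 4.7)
The plan is to adapt the proof of Lemma~\ref{lemma_amice_velu_in_families} verbatim, replacing $L$-coefficients with $\sW[1/p]$-coefficients throughout, rather than trying to reduce mechanically to the lemma. The guiding observation is that the Amice transform $\mathcal{A}: D^h(\Gamma, E)\stackrel{\sim}{\to} \cH_h(\Gamma)_E$ and the Banach-basis argument used in the lemma both remain valid with any $p$-adic Banach algebra $E$ over $\QQ_p$ in place of $L$. We shall apply them to the Banach $L$-algebra $E = \sW[1/p] = \widehat{L^{\mathrm{ur}}}$, which is available since $\sW$ is the ring of integers of the completion of $\QQ_p^{\mathrm{ur}}$.

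First, given an element
\[
\mathcal{A}_\mu \in \bigcap_{\kappa\in B^\circ(r_0)_{\rm cl}} (P_\kappa)\,\widehat{\otimes}_{\QQ_p}\,\mathcal{H}_{v(\lambda)}(\widetilde{\Gamma}_\ac)_L\otimes \sW,
\]
I would reduce to the case $\widetilde{\Gamma}_\ac = \Gamma \cong \ZZ_p$ exactly as at the start of the lemma's proof, view $\mathcal{A}_\mu$ inside the larger ambient space $\mathcal{H}_{(0,v(\lambda))}(\mathscr{Z}\times\Gamma)_{\sW[1/p]}$, and pass through the Amice transform to identify $\mathcal{A}_\mu$ with a $\sW[1/p]$-valued distribution $\mu \in D^{(0,v(\lambda))}(\mathscr{Z}\times\Gamma, \sW[1/p])$. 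Then I would use the same orthonormal Banach basis $\{\mathds{1}_{\mathscr{Z}_i}\otimes e_{j,m,v(\lambda)}\}$ of $C^{(0,v(\lambda))}(\mathscr{Z}\times\Gamma, \sW[1/p])$ over $\sW[1/p]$ (it remains a Banach basis after the flat scalar extension $L\to\sW[1/p]$), and for each $j\geq 0$ and $0\leq m\leq v(\lambda)$ define $\mu_{j,m} \in D^0(\mathscr{Z},\sW[1/p])$ by $\mu_{j,m}(f) = \mu(f\otimes e_{j,m,v(\lambda)})$, with Amice transform $\mathcal{A}_{\mu,j,m}\in \mathcal{H}_0(\mathscr{Z})_{\sW[1/p]}$. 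The hypothesis $\pi_\kappa(\mathcal{A}_\mu) = 0$ for all classical $\kappa$ forces $\pi_\kappa(\mathcal{A}_{\mu,j,m}) = 0$, and since $\mathcal{A}_{\mu,j,m}$ is an Iwasawa function on $\mathscr{Z}$ with coefficients in the integral domain $\sW[1/p]$, the infinitude of classical points in the disc $\mathscr{Z}$ gives $\mathcal{A}_{\mu,j,m} = 0$. Thus $\mu_{j,m}=0$ for every $j,m$, hence $\mu=0$ and $\mathcal{A}_\mu=0$.

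The main technical point will be to justify the compatibility between the algebraic tensor product $\mathcal{H}_{v(\lambda)}(\widetilde{\Gamma}_\ac)_L\otimes \sW$ appearing in the statement and the $\sW[1/p]$-valued distribution space $D^{(0,v(\lambda))}(\mathscr{Z}\times\Gamma, \sW[1/p])$ used above, i.e.\ to argue that the natural map from the former into a completion of the latter is injective so that the vanishing upstairs implies the vanishing downstairs. This should pose no real difficulty, since the only property of the coefficient ring that the argument uses is that it is a $p$-adic Banach $L$-algebra which is an integral domain, both of which are enjoyed by $\sW[1/p]$.
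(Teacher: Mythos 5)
Your proposal takes the same route as the paper: reduce to running the argument of Lemma~\ref{lemma_amice_velu_in_families} over $L\widehat{\QQ}_p^{\ur}=\sW[1/p]$ in place of $L$, and observe that the compatibility of the algebraic tensor product $\mathcal{H}_{(0,v(\lambda))}(\mathscr{Z}\times\Gamma)_L\otimes\sW$ with the distribution space over $\sW[1/p]$ is the only point requiring care. The paper's justification of that compatibility (equations \eqref{eqn_cor_lemma_amice_velu_in_families_1}--\eqref{eqn_cor_lemma_amice_velu_in_families_4}) rests not on flatness, as your parenthetical suggests, but on the fact that $\sW$ is discretely valued, so that Mahler coefficients exist over $L\widehat{\QQ}_p^{\ur}$ and \cite[Remark 6]{LoefflerHeidelberg2012} applies with $L\widehat{\QQ}_p^{\ur}$ in place of $L$.
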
}
{\begin{proof}
%Let us write $\sW_L$ for the ring of integers of $L\widehat{\QQ}_p^{\ur}$. 
We retain the notation of the proof of Lemma~\ref{lemma_amice_velu_in_families}. On noting that $\sW$ is discretely valued, \cite[Remark 6]{LoefflerHeidelberg2012} still applies when we replace $L$ in the proof of Lemma~\ref{lemma_amice_velu_in_families} with $L\widehat{\QQ}_p^{\ur}$ to show that
\begin{equation}
\label{eqn_cor_lemma_amice_velu_in_families_1}
C^{(0,v(\lambda))}(\mathscr{Z}\times \Gamma,L\widehat{\QQ}_p^{\ur})\cong C^{0}(\mathscr{Z},L\widehat{\QQ}_p^{\ur})\,\widehat{\otimes}_{{L\widehat{\QQ}_p^{\ur}}}\,C^{(v(\lambda))}(\Gamma,L\widehat{\QQ}_p^{\ur})
\end{equation}
Moreover, since Mahler coefficients exist over $L\widehat{\QQ}_p^{\ur}$, we also have natural isomorphisms
\begin{equation}
\label{eqn_cor_lemma_amice_velu_in_families_2}
 C^{0}(\mathscr{Z},L)\otimes_L L\widehat{\QQ}_p^{\ur}\stackrel{\sim}{\lra} C^{0}(\mathscr{Z},L\widehat{\QQ}_p^{\ur})
\end{equation}
\begin{equation}
\label{eqn_cor_lemma_amice_velu_in_families_3}
 C^{(v(\lambda))}(\mathscr{Z},L)\otimes_L L\widehat{\QQ}_p^{\ur}\stackrel{\sim}{\lra} C^{(v(\lambda))}(\mathscr{Z},L\widehat{\QQ}_p^{\ur})\,.
\end{equation}
On combining \eqref{eqn_cor_lemma_amice_velu_in_families_1}, \eqref{eqn_cor_lemma_amice_velu_in_families_2} and \eqref{eqn_cor_lemma_amice_velu_in_families_3}, we deduce that
\begin{equation}
\label{eqn_cor_lemma_amice_velu_in_families_4}
\cH_{(0,v(\lambda))}(\mathscr{Z}\times\Gamma)_L\otimes \sW=D^{(0,v(\lambda))}(\mathscr{Z}\times \Gamma,L)\otimes_LL\widehat{\QQ}_p^{\ur} \cong D^{(0,v(\lambda))}(\mathscr{Z}\times \Gamma,L\widehat{\QQ}_p^{\ur})=\cH_{(0,v(\lambda))}(\mathscr{Z}\times\Gamma)_{L\widehat{\QQ}_p^{\ur}}\,.
\end{equation}
With \eqref{eqn_cor_lemma_amice_velu_in_families_4} at hand, one may proceed as in the proof of Lemma~\ref{lemma_amice_velu_in_families}, on replacing all $L$'s therein with $L\widehat{\QQ}_p^{\ur}$. 
\end{proof}}

\begin{remdef}
\label{remdefn_big_omega_eta}
{  Following \cite{perrinriou94} and \cite[Definition 3.12]{Ochiai2003AMJ} with minor alterations, we will explain that $\LL_{\cO_L}(\widetilde{\Gamma}_\cyc)=\cO_L[[\widetilde{\Gamma}_\cyc]]$ $($where $\widetilde{\Gamma}_\cyc=\Gal(\QQ(\mu_{p^{\infty}})/\QQ))$$)$ naturally interpolates $\Dcris(\cO_L(n))$ as $n$ varies in the weight space, given the fixed basis $\epsilon=\{\zeta_{p^m}\}\in \ZZ_p(1)$. We put $\chi_\cyc= \langle \chi_\cyc \rangle\omega$, where $\omega$ is the Teichm\"uller character giving the action of $G_\QQ$ on $p$th roots of unity, whereas $\langle \chi_\cyc \rangle$ is the restriction of $\chi_\cyc$ to the Galois group $\Gamma_\cyc$ of the cyclotomic $\ZZ_p$-extension.}

{\item[i)] The fixed basis $\epsilon\in \ZZ_p(1)$ determines a basis $\epsilon_n:=\epsilon^{\otimes n}\in \cO_L(n)$ for every integer $n$ and an element $t\in \Bcris$ on which $G_{\QQ_p}$ acts via the cyclotomic character, and in turn a basis $\epsilon_n\otimes t^{-n} \in \Dcris(\cO_L(n))$, which we often denote by $t^{-n}$ alone. The basis $\epsilon$ also determines a natural isomorphism 
$$\delta_n: \LL_{\cO_L}(\widetilde{\Gamma}_\cyc)/(\gamma-\chi_\cyc^n(\gamma))\stackrel{\sim}{\lra} \Dcris(\cO_L(n))$$
induced by $\gamma \mapsto (\gamma\cdot\epsilon_n)\otimes t^{-n}=\chi_\cyc^n(\gamma)(\epsilon_n\otimes t^{-n})$, so that $1$ is sent to the basis  $ \epsilon_n\otimes t^{-n}$ of $ \Dcris(\cO_L(n))$.}% determined by $\epsilon$. We will identify $\LL_{\cO_L}(\widetilde{\Gamma}_\cyc)/(\gamma-\chi_\cyc^n(\gamma))$ with $\Dcris(\cO_L(n))$ via this isomorphism.}
{\item[ii)] As before, we will denote the open rigid analytic ball about $k$ of radius $r_0$ by $U$ $($in place of $B^\circ(r_0)$$)$ when we rely on the constructions in \cite{LZ1}. As in \cite{LZ1}, we let $\LL_U$ denote the $L$-valued analytic functions on $U$ bounded by $1$. Then the ``square-root'' $\kappa_U/2$ of the universal weight character is given as the composition
\begin{equation}
\label{eqn_sqrt_universal_wt_char}
    \kappa_U/2:G_\QQ \twoheadrightarrow \widetilde{\Gamma}_\cyc \xrightarrow{\chi_\cyc^{{1}/{2}}} \ZZ_p^\times \hookrightarrow \LL_{\cO_L}(\ZZ_p^\times)^\times \lra  \LL_U^\times\,.
\end{equation}
where square-root $\chi_\cyc^{{1}/{2}}:=\omega^{k/2}\langle \chi_\cyc\rangle^{1/2}$ of the cyclotomic character is well defined $($the attentive reader will realize that our discussion parallels \cite[Definition 2.1.3]{howard2007}$)$. We similarly define 
$$-\kappa_U/2:G_\QQ \twoheadrightarrow \widetilde{\Gamma}_\cyc \xrightarrow{\chi_\cyc^{-{1}/{2}}} \ZZ_p^\times \hookrightarrow \LL_{\cO_L}(\ZZ_p^\times)^\times \lra  \LL_U^\times\,.$$}
{ \item[iii)] We now fix $r \in (0,r_0)$ and consider the Tate algebra $\mathscr{A}^\circ$ of power bounded $L$-valued rigid analytic functions on the closed disc of radius $r$. By the choice of $U$ and $r$, we have a natural morphism $\LL_U\to \mathscr{A}^\circ$. We set
\begin{equation}
\label{eqn_sq_root_twist_big}
    \mathscr{A}^\circ(-\kappa_U/2)=\LL_{\cO_L}(\widetilde{\Gamma}_\cyc)\otimes_{[\chi_\cyc^{-1/2}]}\mathscr{A}^\circ \,.
\end{equation}
 Then $\mathscr{A}^\circ(-\kappa_U/2)$ is a free $\mathscr{A}^\circ$-module of rank one on which $G_\QQ$ acts via the character $(\kappa_U/2)^{-1}$. We further remark that in \eqref{eqn_sq_root_twist_big}, $[\chi_\cyc^{-1/2}]: \LL(\widetilde{\Gamma}_\cyc) \to  \mathscr{A}^\circ$  is the ring homomorphism given as the composition
$$\LL(\widetilde{\Gamma}_\cyc)\xrightarrow{[\omega^{-k/2}\langle \chi_\cyc\rangle^{-1/2}]} \LL(\ZZ_p^\times)\lra \LL_U \lra \mathscr{A}^\circ\,.$$
where the first arrow on the left is induced from the map $\omega^{-k/2}\langle \chi_\cyc\rangle^{-1/2}:\widetilde{\Gamma}_\cyc \to \ZZ_p^\times$ on the group-like elements of the respective completed group rings. %We let $e_{-\frac{\kappa_U}{2}}\in \mathscr{A}^\circ(-\kappa_U/2)$ denote the generator corresponding to the element $1\otimes 1 $ in the right hand side of \eqref{eqn_sq_root_twist_big}. 
}
{\item[iv)] We let $\LL(\ZZ_p^\times)^{(k)} \subset \LL(\ZZ_p^\times)$ denote the irreducible component the weight $k$ specialization factors through. For any classical point $\kappa\in B^\circ(r)_{\rm cl}$, we recall that the specialization map
$${\rm sp}_\kappa:\LL(\ZZ_p^\times)\lra \ZZ_p$$
is given by $[\zeta_{p-1}(1+p)]\mapsto \zeta_{p-1}^k(1+p)^\kappa$ $($which factors as $\LL(\ZZ_p^\times)\to \LL(\ZZ_p^\times)^{(k)}\to \LL_U \to \mathscr{A}^\circ\to \ZZ_p$, since $\kappa\in B^\circ(r)_{\rm cl}$$)$, where  $[x]\in \LL(\ZZ_p^\times)$ denotes the group like element for any given $x\in \ZZ_p^\times$ and $\zeta_{p-1}$ is any primitive $(p-1)^{\rm st}$ root of unity. It is then easy to see that ${\rm sp}_\kappa\circ (-\kappa_U/2)=\chi_\cyc^{-\kappa/2}$. 
In other words, $\mathscr{A}^\circ(-\kappa_U/2)$ interpolates $\{\ZZ_p(-\kappa/2)\}_{\kappa\in B^\circ(r)_{\rm cl}}$, in the sense that we have a natural isomorphism
$$\mathscr{A}^\circ(-\kappa_U/2)/\cP_\kappa^\circ \mathscr{A}^\circ(-\kappa_U/2)\xrightarrow[{\rm sp}_\kappa]{\sim} \cO(-\kappa/2)$$
where $\cP_\kappa:=([\zeta_{p-1}(1+p)]-\zeta_{p-1}^\kappa(1+p)^\kappa)\subset \LL_{\cO_L}(\ZZ_p^\times)$ is the kernel of ${\rm sp}_\kappa$ and $\cP_\kappa^\circ$ is its image under the map $\LL_{\cO_L}(\ZZ_p^\times)\to \mathscr{A}^\circ$.
}
\item[v)] {Let us denote by $\gamma_0 \in \widetilde{\Gamma}_\cyc$ the unique generator such that $\chi_\cyc(\gamma_0)=\zeta_{p-1}^k(1+p)$. Consider the $\LL(\ZZ_p^\times)$-module
$$\cD_\infty:=\LL(\widetilde{\Gamma}_\cyc)\widehat{\otimes}_{\ZZ_p}\LL(\ZZ_p^\times)\Big{/} (\gamma_0^{-2}\otimes 1-1\otimes[\zeta_{p-1}(1+p)])\,.$$
For any classical weight $w: \LL(\ZZ_p^\times)\to \ZZ_p$ in the connected component of the weight space containing $k$, the specialization map ${\rm sp}_w$ induces a natural isomorphism
$$\cD_\infty/\cP_w \cD_\infty \xrightarrow[1\otimes {\rm sp}_w]{\sim} \LL(\widetilde{\Gamma}_\cyc)\big{/}(\gamma_0^{-2}-\zeta_{p-1}^k(1+p)^w)\xrightarrow[\delta_{w/2}]{\sim} \Dcris(\ZZ_p(-w/2))\,.$$}
{We set $\cD^\circ(\mathscr{A}^\circ(-\kappa_U/2)):=\cD_\infty\otimes_{\LL(\ZZ_p^\times)}\mathscr{A}^\circ$. This is a free $\mathscr{A}^\circ$-module of rank one and (thanks to the discussion above) for any $\kappa\in B^\circ(r)_{\rm cl}$, the specialization map ${\rm sp}_\kappa$ induces an isomorphism
$$\cD^\circ(\mathscr{A}^\circ(-\kappa_U/2))/\cP^\circ_\kappa \cD^\circ(\mathscr{A}^\circ(-\kappa_U/2))\stackrel{\sim}{\lra} \Dcris(\cO_L(-\kappa/2))$$
mapping $1\otimes 1$ to the preferred generator $\epsilon_{-\kappa/2}\otimes t^{\kappa/2}\in \Dcris(\cO_L(-\kappa/2))$.}
{\item[vi)] We denote the generator $t^{-1}\otimes (1 \otimes 1) \in \Dcris(\ZZ_p(1))\otimes_{\ZZ_p} \cD^\circ(\mathscr{A}^\circ(-\kappa_U/2))$ by $t^{\frac{\kappa_U}{2}-1}$. As we have remarked in {\rm (v)}, $t^{\frac{\kappa_U}{2}-1}$ specializes to the generator of $\Dcris(\cO_L(1-\kappa/2))$ determined by $\epsilon$, for each $\kappa\in B^\circ(r)_{\rm cl}$.}
\end{remdef}

{\begin{defn}
\label{defn_big_omega_eta}
We let $\eta_{\f}:=\eta_\f^{\rm LZ}\otimes t^{\frac{\kappa_U}{2}-1} \in \mathbf{D}_U$ denote the unique vector which interpolates the vectors $\{\eta_{g}\otimes {\epsilon_{1-\frac{w_g}{2}}\otimes t^{\frac{w_g}{2}-1}} \in \Dcris(V_g)\}_{g\in B^\circ(r)_{\rm cl}}$, where $w_g$ stands for the weight of the $p$-stabilized eigenform $g$ and the $\LL_{U}$-module $\mathbf{D}_U$ is {the $\Lambda_U$-adic Dieudonn\'e module coming form the filtration of the $(\vp,\Gamma)$-module of $\TT_\f$,} given as in Definition~\ref{defn_appendix_big_D_twisted}. We also refer the reader to Remark~\ref{rem_appendix_explain_C_again} for a discussion that concerns the element $\eta_\f^{\rm LZ}$.
\end{defn}}

\begin{theorem}
\label{thm_main_GHCinterpolate} 
Suppose $c_0$ is a positive integer prime to $pN$.
 \item[i)] { There exists a unique $($semi-local$)$ class
$${{\pmb\zeta}^\ac_{\f,c_0}}\in H^1\left(K_{c_0,\p},\TT_\f^\ac[1/p]\right)\widehat{\otimes}_{\mathcal{H}_{0}(\widetilde{\Gamma}_\ac)} \,\mathcal{H}_{v(\lambda)}(\widetilde{\Gamma}_\ac)_{\sW}$$
which is characterized by the property that for any $\kappa\in B^\circ(r_0)_{\rm cl}$,
\begin{equation}\label{Eqn_main_BigGHC_local_ac}
{\pmb \zeta}^\ac_{\f,c_0} (\kappa) =\res_{\p}\left({{\frak{z}}_{\f(\kappa),c_0}^\ac}\right) \in H^1(K_{c_0,\p},\TT_{\f(\kappa)}^\ac[1/p])\,.
\end{equation}
Moreover, if the assumption \ref{item_Int_PR} holds, then 
$${{\pmb\zeta}^\ac_{\f,c_0}}\in H^1\left(K_{c_0,\p},\TT_\f^\ac\right)\widehat{\otimes}_{\ZZ_p[[\widetilde{\Gamma}_\ac]]} \,p^{c(\lambda)}\mathcal{H}_{v(\lambda)}^+(\widetilde{\Gamma}_\ac)_{\sW}$$
where $c(\lambda)$ is a constant that depends only on the slope $v(\lambda)$ of the Coleman family.
\item[ii)] If the assumption \ref{item_Int_PR} holds, then there exists a class 
$${{\frak{z}}_{\f,c_0}^\ac}\in H^1(G_{K_{c_0},\Sigma},\TT_{\f}^\ac)\,\widehat{\otimes}\,p^{c(\lambda)}\mathcal{H}_{v(\lambda)}^+(\widetilde{\Gamma}_\ac)_{\sW}$$ 
such that ${\pmb\zeta}_{\f,c_0}^\ac=\res_\p({\frak{z}}_{\f,c_0}^\ac)$. 
\item[iii)] If the $\LL_{(k,r_0)}\widehat{\otimes}\,\ZZ_p[[\widetilde{\Gamma}_\ac]]$-module $\widetilde{H}^2(G_{K_{c_0},\Sigma},\TT_{\f}^\ac;\Delta(\p,\p^c))$ is torsion, 
then there exists at most one class ${\frak{z}}_{\f,c_0}^\ac\in H^1\left(K_{c_0,\Sigma},\TT_\f^\ac[1/p]\right)\widehat{\otimes}_{\mathcal{H}_{0}(\widetilde{\Gamma}_\ac)} \,\mathcal{H}_{v(\lambda)}(\widetilde{\Gamma}_\ac)_{\sW}$ for which we have $\res_\p({\frak{z}}_{\f,c_0}^\ac)={\pmb\zeta}_{\f,c_0}^\ac$. It is characterized by the interpolation property 
\begin{equation}\label{eqn_global_GHC_no_ac_interpolation}
{\frak{z}}_{\f,c_0}^\ac(\kappa)= {{\frak{z}}_{\f(\kappa),c_0}^\ac}
\end{equation}
for any $\kappa\in B^\circ(r_0)_{\rm cl}$ for which $\widetilde{H}^2(G_{K_{c_0},\Sigma},T_{\f(\kappa)}^\ac;\Delta(\p,\p^c))$ is torsion as an $\cO_L[[\widetilde{\Gamma}_\ac]]$-module.}
\end{theorem}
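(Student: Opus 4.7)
The plan is to produce the semi-local class first by applying the two-variable Perrin--Riou big exponential map of Appendix~\ref{sec:biglogalaonganticyclotower} to the two-variable anticyclotomic $p$-adic $L$-function $\cL_{\f,c_0}$ constructed in \S\ref{subsec_anticyclo_padic_L_in_families}, and then bootstrap this local object to a global class using the existing family of anticyclotomic classes $\{{\frak{z}}_{\f(\kappa),c_0}^\ac\}_{\kappa}$ furnished by Theorem~\ref{thm_kobayashi_ota_ac_interpolation}. For part (i), I would set
$${\pmb\zeta}^\ac_{\f,c_0} := \Omega^{\varepsilon,\p}_{\TT_\f,\kappa_U/2,c_0}\!\left(\eta_\f \otimes \cL_{\f,c_0}^\iota\right),$$
where $\Omega^{\varepsilon,\p}_{\TT_\f,\kappa_U/2,c_0}$ is the family version of Zhang's semi-local big exponential map from the Appendix and $\eta_\f$ is the $\Lambda_U$-adic class vector of Definition~\ref{defn_big_omega_eta}. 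Uniqueness of ${\pmb\zeta}^\ac_{\f,c_0}$ is automatic from its description. The interpolation property \eqref{Eqn_main_BigGHC_local_ac} is verified by specializing at $\kappa\in B^\circ(r_0)_{\rm cl}$: by the defining interpolation of the big exponential map in families, the specialization recovers $\Omega^{\varepsilon,\p}_{T_{\f(\kappa)^\lambda},\kappa/2,c_0}$ applied to $(\eta_{\f(\kappa)^\lambda}\otimes t^{\kappa/2-1})\otimes \cL_{\f(\kappa),c_0}^\iota$, which by Theorem~\ref{thm_GHC_rec_law_g} equals $\res_\p({\frak{z}}_{\f(\kappa),c_0}^\ac)$. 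The refined integral statement under \ref{item_Int_PR} is inherited directly from the integrality of the big exponential map established in the Appendix.

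For part (ii), the task is to exhibit a global cohomology class whose $\p$-localization is the integral local class ${\pmb\zeta}^\ac_{\f,c_0}$. For each classical $\kappa$, Theorem~\ref{thm_kobayashi_ota_ac_interpolation} provides a global class ${\frak{z}}_{\f(\kappa),c_0}^\ac$ with $\res_\p$ equal to ${\pmb\zeta}^\ac_{\f,c_0}(\kappa)$. I would patch these classes into a $\LL_{(k,r_0)}$-adic family by using that the full lattice $\TT_\f$ has vanishing $H^0$ on restriction to $G_{K_{c_0p^\infty}}$ (a consequence of \ref{item_BI} as in the proof of Theorem~\ref{thm_kobayashi_ota_ac_interpolation}), which rules out ambiguities in the lift; combined with the density statements of Lemma~\ref{lemma_amice_velu_in_families} and Corollary~\ref{cor_lemma_amice_velu_in_families}, this promotes the pointwise compatibility into an honest element of $H^1(G_{K_{c_0},\Sigma},\TT_\f^\ac)\,\widehat{\otimes}\,p^{c(\lambda)}\mathcal{H}_{v(\lambda)}^+(\widetilde{\Gamma}_\ac)_{\sW}$. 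The equality $\res_\p({\frak{z}}_{\f,c_0}^\ac)={\pmb\zeta}^\ac_{\f,c_0}$ is then checked by specialization at all classical $\kappa$ and appeal to the same density.

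For part (iii), suppose two candidates ${\frak{z}}_{\f,c_0}^\ac$ and ${\frak{z}}_{\f,c_0}^{\prime,\ac}$ share the same $\p$-localization ${\pmb\zeta}^\ac_{\f,c_0}$. Their difference defines a class in
$$\widetilde{H}^1\!\left(G_{K_{c_0},\Sigma},\TT_\f^\ac;\Delta(\p,\p^c)\right)\widehat{\otimes}_{\mathcal{H}_0(\widetilde{\Gamma}_\ac)}\mathcal{H}_{v(\lambda)}(\widetilde{\Gamma}_\ac)_{\sW}\,,$$
where the Selmer complex with local conditions $\Delta(\p,\p^c)$ encodes exactly classes with vanishing $\p$-restriction and unrestricted $\p^c$-restriction. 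The exact triangle defining this complex, combined with the torsion hypothesis on $\widetilde{H}^2$, forces $\widetilde{H}^1$ to be torsion over $\LL_{(k,r_0)}\,\widehat{\otimes}\,\ZZ_p[[\widetilde{\Gamma}_\ac]]$; tensoring with the distribution algebra $\mathcal{H}_{v(\lambda)}(\widetilde{\Gamma}_\ac)$ then kills all such torsion, yielding uniqueness. To deduce the interpolation \eqref{eqn_global_GHC_no_ac_interpolation} at a classical $\kappa$ satisfying the weight-$\kappa$ torsion hypothesis, apply the same argument to the difference ${\frak{z}}_{\f,c_0}^\ac(\kappa) - {\frak{z}}_{\f(\kappa),c_0}^\ac$: both have $\p$-localization equal to ${\pmb\zeta}^\ac_{\f,c_0}(\kappa)$ by part (i), so the difference lies in the corresponding weight-$\kappa$ Selmer complex, is torsion by hypothesis, and dies after scalar extension to $\mathcal{H}_{v(\lambda)}$.

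The main obstacle is part (ii): constructing a genuine $\LL_{(k,r_0)}\widehat{\otimes}\LL(\widetilde{\Gamma}_\ac)$-adic global class from the pointwise-compatible family $\{{\frak{z}}_{\f(\kappa),c_0}^\ac\}_\kappa$. The delicate points are (a) the patching requires the integrality provided by \ref{item_Int_PR} in order to stay inside the bounded distribution module $p^{c(\lambda)}\mathcal{H}_{v(\lambda)}^+$ uniformly in $\kappa$, and (b) one must rule out obstructions coming from the failure of exactness of specialization on cohomology, which is controlled via the \textbf{(BI)}-driven vanishing of $H^0$ together with the density arguments of Lemma~\ref{lemma_amice_velu_in_families}. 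Without \ref{item_Int_PR}, one is forced to construct only a class valued in the larger non-integral distribution module, which corresponds to the weaker statement of part (i).
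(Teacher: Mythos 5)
Your construction in part (i) is the paper's construction (the image of $\eta_\f\otimes\cL^\iota_{\f,c_0}$ under the family semi-local big exponential), and the verification of the interpolation formula via Theorem~\ref{thm_GHC_rec_law_g} is likewise as in the paper. However, the claim that ``uniqueness is automatic from its description'' misses the point of the statement: the theorem asserts that \eqref{Eqn_main_BigGHC_local_ac} \emph{characterizes} the class, which requires showing that any two classes in $H^1(K_{c_0,\p},\TT_\f^\ac[1/p])\widehat\otimes\mathcal{H}_{v(\lambda)}(\widetilde\Gamma_\ac)_\sW$ with the same specializations at all $\kappa\in B^\circ(r_0)_{\rm cl}$ coincide. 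This is a density statement, and it is precisely the content of Corollary~\ref{cor_lemma_amice_velu_in_families} applied to the finitely generated torsion-free module $H^1(K_{c_0,\p},\TT_\f^\ac)$; you need to invoke it explicitly.

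Your proposed route to part (ii) has a genuine gap. You want to ``patch'' the pointwise global classes $\{{\frak z}^\ac_{\f(\kappa),c_0}\}_\kappa$ into a $\LL_{(k,r_0)}$-adic family using vanishing of $H^0$ and density. But $H^0$-vanishing and density are \emph{rigidity} tools (they say a family class is determined by its specializations); they do not manufacture a family class out of a compatible system of pointwise classes. Gluing requires controlling the failure of the specialization maps on $H^1$ to surject, which is exactly what is not supplied. The paper's argument is qualitatively different and more direct: it considers the connecting map $\delta\colon H^1(K_{c_0,\p},\TT_\f^\ac)\to\widetilde{H}^2(G_{K_{c_0},\Sigma},\TT_\f^\ac;\Delta(\p,\p^c))$ arising from the Selmer complex of Definition~\ref{defn_selmer_complex}, shows $\pi_\kappa\circ\delta({\pmb\zeta}^\ac_{\f,c_0})=0$ for each classical $\kappa$ via the commutative diagram and the fact that ${\pmb\zeta}^\ac_{\f,c_0}(\kappa)$ already comes from the global class ${\frak z}^\ac_{\f(\kappa),c_0}$, and then concludes by density (Corollary~\ref{cor_lemma_amice_velu_in_families}) that $\delta({\pmb\zeta}^\ac_{\f,c_0})=0$, i.e.\ ${\pmb\zeta}^\ac_{\f,c_0}\in\ker(\delta)=\mathrm{im}(\res_\p)$. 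You should replace your patching idea with this obstruction-vanishing argument.

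Your part (iii) contains an error. You argue that the difference of two candidate global classes lies in $\widetilde H^1(G_{K_{c_0},\Sigma},\TT_\f^\ac;\Delta(\p,\p^c))$ tensored over $\mathcal H_0(\widetilde\Gamma_\ac)$ with $\mathcal H_{v(\lambda)}(\widetilde\Gamma_\ac)_\sW$, that the torsion hypothesis forces $\widetilde H^1$ to be torsion, and that tensoring with $\mathcal H_{v(\lambda)}$ kills all torsion. This last step is false: $\mathcal H_{v(\lambda)}(\widetilde\Gamma_\ac)$ is not a localization of $\LL(\widetilde\Gamma_\ac)$, and $\LL$-torsion modules can survive the base change (e.g.\ $\LL(\widetilde\Gamma_\ac)/(\gamma-1)\otimes_{\LL}\mathcal H_{v(\lambda)}\ne 0$, since $\gamma-1$ is not a unit in $\mathcal H_{v(\lambda)}$). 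The correct argument, which the paper uses (see also the discussion in Remark~\ref{rem_H2vanishing_will_be_verified}), is that $\widetilde H^2$ torsion implies $\widetilde H^1=0$ outright via global duality: $\widetilde H^1$ is torsion-free (as the residual representation is irreducible), and the Euler characteristic argument forces it to be torsion, hence zero. The same point needs to be applied at each classical $\kappa$ to get the interpolation identity \eqref{eqn_global_GHC_no_ac_interpolation}; again, it is the \emph{vanishing} of $\widetilde H^1(T^\ac_{\f(\kappa)};\Delta(\p,\p^c))$, not torsion followed by base change, that ensures uniqueness of the lift.
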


\begin{proof}{
\item[i)] Let us set 
$${\pmb\zeta}^\ac_{\f,c_0}:={\EXP}_{\f,c_0}\left(\eta_{\f}\otimes{{\mathcal{L}_{\f,c_0}^{\iota}}}\right)\in H^1\left(K_{c_0,\p},\TT_\f^\ac[1/p]\right)\widehat{\otimes}_{\mathcal{H}_{0}(\widetilde{\Gamma}_\ac)} \,\mathcal{H}_{v(\lambda)}(\widetilde{\Gamma}_\ac)_{\sW}$$
where $\EXP_{\f,c_0}$ is defined as in \S\ref{subsec_semilocalPR} and 
$${\mathcal{L}_{\f,c_0}}:=\sum_{\widehat\rho\in \widehat{\Gal(K_{c_0}/K)}}e_{\widehat{\rho}}\cdot\cL_{\f,\psi\rho}\in \ZZ_p[{\Gal(K_{c_0}/K)}]\otimes_{\ZZ_p}\sA^\circ\widehat{\otimes_{}}\sW[[\widetilde{\Gamma}_\ac]]\,.
$$
Note that if the assumption \ref{item_Int_PR} holds, then 
$${{\pmb\zeta}^\ac_{\f,c_0}}\in H^1\left(K_{c_0,\p},\TT_\f^\ac\right)\widehat{\otimes}_{\ZZ_p[[\widetilde{\Gamma}_\ac]]} \,p^{c(\lambda)}\mathcal{H}_{v(\lambda)}^+(\widetilde{\Gamma}_\ac)_{\sW}\,.$$
By the interpolative properties of the big exponential map ${\EXP}_{\f,c_0}$ and $\mathcal{L}_{\f,c_0}$, it follows by Theorem~\ref{thm_GHC_rec_law_g} that ${{\pmb\zeta}^\ac_{\f,c_0}}$ defined in this manner verifies \eqref{Eqn_main_BigGHC_local_ac}. Its uniqueness follows once we check that 
\begin{equation}\label{eqn_required_vanishing_4_3}
\bigcap_{\kappa \in B^\circ(r_0)_{\rm cl}}\ker\left(H^1\left(K_{c_0,\p},\TT_\f^\ac[1/p]\right)\widehat{\otimes}_{\mathcal{H}_{0}(\widetilde{\Gamma}_\ac)} \,\mathcal{H}_{v(\lambda)}(\widetilde{\Gamma}_\ac)_{\sW}\stackrel{\pi_\kappa}{\lra} H^1(K_{c_0,\p},T_{\f(\kappa)}^\ac[1/p])_{\sW}\right)=0\,.
\end{equation}
Notice that 
\begin{align*}
\ker\Big(H^1\left(K_{c_0,\p},\TT_\f^\ac[1/p]\right)\widehat{\otimes}_{\mathcal{H}_{0}(\widetilde{\Gamma}_\ac)}  &\mathcal{H}_{v(\lambda)}(\widetilde{\Gamma}_\ac)_{\sW}\stackrel{\pi_\kappa}{\lra} H^1(K_{c_0,\p},T_{\f(\kappa)}^\ac)_{\sW}\Big)\\
&=(P_\kappa)\,H^1(K_{c_0,\p},\TT_\f^\ac[1/p])\widehat{\otimes}_{\mathcal{H}_{0}(\widetilde{\Gamma}_\ac)}  \mathcal{H}_{v(\lambda)}(\widetilde{\Gamma}_\ac)_{\sW}\,.
\end{align*}
Since the $\LL_{k,r_0}\,\widehat{\otimes}\,\ZZ_p[[\widetilde{\Gamma}_\ac]]$-module $H^1\left(K_{c_0,\p},\TT_\f^\ac\right)$ is finitely generated and torsion-free%self-remark: As such, by the structure theory, it embeds into a free module: first, it embeds into a reflexive module, and any reflexive module embeds into a free module: See https://stacks.math.columbia.edu/tag/0AUY , Lemma 15.23.6.
, the required vanishing in \eqref{eqn_required_vanishing_4_3} follows from Corollary~\ref{cor_lemma_amice_velu_in_families}.}

{
\item[ii)] Fix $\kappa\in B^\circ(r_0)_{\rm cl}$, let us consider the following commutative diagram with exact columns:
$$\xymatrix{H^1(G_{K_{c_0},\Sigma},\TT_\f^\ac)\widehat\otimes_{\ZZ_p[[\widetilde{\Gamma}_\ac]]}p^{c(\lambda)}\mathcal{H}_{v(\lambda)}^+(\widetilde{\Gamma}_\ac)_{\sW} \ar[d]^(.5){\res_\p}\ar[r]^(.48){\pi_\kappa}&H^1(G_{K_{c_0},\Sigma},T_{\f(\kappa)}^\ac)\widehat\otimes_{\ZZ_p[[\widetilde{\Gamma}_\ac]]}p^{c(\lambda)}\mathcal{H}_{v(\lambda)}^+(\widetilde{\Gamma}_\ac)_{\sW} \ar[d]^(.5){\res_\p}\\
H^1(K_{c_0,\p},\TT_\f^\ac)\widehat\otimes_{\ZZ_p[[\widetilde{\Gamma}_\ac]]}p^{c(\lambda)}\mathcal{H}_{v(\lambda)}^+(\widetilde{\Gamma}_\ac)_{\sW}\ar[d]^(.5){\delta}\ar[r]& H^1(K_{c_0,\p},T_{\f(\kappa)}^\ac)\ar[d]^(.5){\delta(\kappa)}\widehat\otimes_{\ZZ_p[[\widetilde{\Gamma}_\ac]]}p^{c(\lambda)}\mathcal{H}_{v(\lambda)}^+(\widetilde{\Gamma}_\ac)_{\sW} \\
\widetilde{H}^2(G_{K_{c_0},\Sigma},\TT_\f^\ac;\Delta(\p,\p^c))\widehat\otimes_{\ZZ_p[[\widetilde{\Gamma}_\ac]]}p^{c(\lambda)}\mathcal{H}_{v(\lambda)}^+(\widetilde{\Gamma}_\ac)_{\sW} \ar[r]_(.49){\pi_\kappa}&\widetilde{H}^2(G_{K_{c_0},\Sigma},T_{\f(\kappa)}^\ac;\Delta(\p,\p^c))\widehat\otimes_{\ZZ_p[[\widetilde{\Gamma}_\ac]]}p^{c(\lambda)}\mathcal{H}_{v(\lambda)}^+(\widetilde{\Gamma}_\ac)_{\sW}
}$$
where $\widetilde{H}^2(G_{K_{c_0},\Sigma},\TT_\f^\ac;\Delta(\p,\p^c))$ and $ \widetilde{H}^2(G_{K_{c_0},\Sigma},T_{\f(\kappa)}^\ac;\Delta(\p,\p^c))$ are the finitely generated $\LL_{(k,r_0)}\,\widehat{\otimes}\,\ZZ_p[[\widetilde{\Gamma}_\ac]]$-modules given as in Definition~\ref{defn_selmer_complex}. Notice that we have
$$\pi_\kappa\circ\delta\left({\pmb\zeta}_{\f,c_0}^\ac\right)=\delta(\kappa)\circ \res_\p({\bf z}_{\f(\kappa)^\circ,c_0,\bblambda(\kappa)})=0\,.$$
We conclude that
$$\delta\left({\pmb\zeta}_{\f,c_0}^\ac\right)\in \bigcap_{\kappa\in B^\circ(r_0)_{\rm cl}}\,\, P_\kappa\cdot \widetilde{H}^2(G_{K_{c_0},\Sigma},\TT_{\f}^\ac;\Delta(\p,\p^c))\widehat\otimes_{\ZZ_p[[\widetilde{\Gamma}_\ac]]}p^{c(\lambda)}\mathcal{H}_{v(\lambda)}^+(\widetilde{\Gamma}_\ac)_{\sW}=\{0\}$$
where the vanishing of the intersection follows from Corollary~\ref{cor_lemma_amice_velu_in_families}, thanks to the fact that the $\LL_{(k,r_0)}\,\widehat{\otimes}\,\ZZ_p[[\widetilde{\Gamma}_\ac]]$-module $\widetilde{H}^2(G_{K_{c_0},\Sigma},\TT_{\f}^\ac;\Delta(\p,\p^c))$ is finitely generated. This concludes the proof that 
$${\pmb\zeta}_{\f,c_0}^\ac\in \ker(\delta)={\rm im}(\res_\p)\,$$
so that we have ${\pmb\zeta}_{\f,c_0}=\res_\p({\frak{z}}_{\f,c_0}^\ac)$ for some ${\frak{z}}_{\f,c_0}^\ac\in H^1(G_{K_{c_0},\Sigma},\TT_\f^\ac)\widehat\otimes_{\ZZ_p[[\widetilde{\Gamma}_\ac]]}p^{c(\lambda)}\mathcal{H}_{v(\lambda)}^+(\widetilde{\Gamma}_\ac)_{\sW}$, as required. 
\item[iii)] We need  to verify that $\widetilde{H}^1(G_{K_{c_0},\Sigma},\TT_{\f}^\ac;\Delta(\p,\p^c))=0$ under the assumption that $\widetilde{H}^2(G_{K_{c_0},\Sigma},\TT_{\f}^\ac;\Delta(\p,\p^c))$ is $\LL_{(k,r_0)}\,\widehat{\otimes}\,\ZZ_p[[\widetilde{\Gamma}_\ac]]$-torsion. This is immediate by global duality. For all $\kappa\in B^\circ(r_0)_{\rm cl}$ such that the $\cO_L[[\widetilde{\Gamma}_\ac]]$-module $\widetilde{H}^2(G_{K_{c_0},\Sigma},T_{\f(\kappa)}^\ac;\Delta(\p,\p^c))$ is torsion, we infer by global duality that $\widetilde{H}^1(G_{K_{c_0},\Sigma},T_{\f(\kappa)}^\ac;\Delta(\p,\p^c))=0$. Since we have an exact sequence 
$$0\lra \widetilde{H}^1(G_{K_{c_0},\Sigma},T_{\f(\kappa)}^\ac;\Delta(\p,\p^c))\lra H^1(G_{K_{c_0},\Sigma},T_{\f(\kappa)}^\ac)\stackrel{\res_\p}{\lra} H^1(K_\p,T_{\f(\kappa)}^\ac)$$
by the defining property of the Selmer complex $\widetilde{{\bf R}\Gamma}_{\rm f}(G_{K_{c_0},\Sigma},T_{\f(\kappa)}^\ac;\Delta(\p,\p^c))$, it follows that for all such $\kappa$, there exists at most one class in $H^1(G_{K_{c_0},\Sigma},T_{\f(\kappa)}^\ac)\,\widehat\otimes_{\ZZ_p[[\widetilde{\Gamma}_\ac]]}\,p^{c(\lambda)}\mathcal{H}_{v(\lambda)}^+(\widetilde{\Gamma}_\ac)_{\sW}$ that maps under $\res_\p$ to the element 
$$\Omega^{\varepsilon,{\p}}_{T_{\f(\kappa)}, \frac{\kappa}{2}}\left(\eta_{\f}(\kappa) \otimes {\cL_{\f(\kappa)^\circ}^\iota}\right)\in H^1(K_\p,T_{\f(\kappa)}^\ac)\,\widehat\otimes_{\ZZ_p[[\widetilde{\Gamma}_\ac]]}\,p^{c(\lambda)}\mathcal{H}_{v(\lambda)}^+(\widetilde{\Gamma}_\ac)_{\sW}.$$ 
Since 
{${{\frak{z}}_{\f(\kappa),c_0}^\ac}$ is already one such class thanks to Theorem~\ref{thm_GHC_rec_law_g}, the proof of the interpolative property \eqref{eqn_global_GHC_no_ac_interpolation} follows.}}
\end{proof}

{
\begin{remark}
\label{rem_JLZ}
If one extends the reciprocity law \cite[Theorem 8.2.4]{JLZ} of Jetchev--Loeffler--Zerbes to the $p$-adic $L$-function $\sL_{\p,\psi}(\f)$ to general $\psi$ (modulo space concerns, this appears to be a straightforward formal task),  then Remark~\ref{rem_compare_BDP_of_JLZ} together with the ``uniqueness'' assertion in Theorem~\ref{thm_main_GHCinterpolate}(i) will allow us to conclude, without assuming \ref{item_Int_PR}, that the semi-local class ${\pmb\zeta}^\ac_{\f,c_0}$ arises as the image of a global class\footnote{ As we will see in the proof of Theorem~\ref{thm_main_GHC_in_families_with_ac}(i), this global class is uniquely determined with this property.
} under the map $\res_\p$. 

We also remind the reader that \ref{item_Int_PR} holds true when $\f$ has slope zero.
\end{remark}}

{\begin{remark}
\label{rem_H2vanishing_will_be_verified}
As we have noted in the proof of Theorem~\ref{thm_main_GHCinterpolate}{(iii)},  the hypothesis that the $\LL_{(k,r_0)}\widehat{\otimes}\,\ZZ_p[[\widetilde{\Gamma}_\ac]]$-module $\widetilde{H}^2(G_{K_{c_0},\Sigma},\TT_{\f}^\ac;\Delta(\p,\p^c))$ be torsion is equivalent to the vanishing $\widetilde{H}^1(G_{K_{c_0},\Sigma},\TT_{\f}^\ac;\Delta(\p,\p^c))=0$, thanks to global duality. In Theorem~\ref{thm_main_GHC_in_families_with_ac}(i), we verify (relying on results of Hsieh and Kobayashi) that this required vanishing statement is always true. As a result, the hypothesis that $\widetilde{H}^2(G_{K_{c_0},\Sigma},\TT_{\f}^\ac;\Delta(\p,\p^c))$ is torsion can be dropped from the statement of Theorem~\ref{thm_main_GHCinterpolate}{(iii)}, as we do so in the statement of Theorem~\ref{thm_main_GHC_in_families_with_ac}(i).
\end{remark}}

When $r_0$ is sufficiently small, the main results of \cite{liu-CMH} equip us with
\begin{itemize}
\item a $(\vp,\Gamma)$-module $\DD(\TT_\f)$ over the relative Robba ring $\RR_{\sA}$, where $\sA=\sA(r_0)$ and $\Gamma=\Gal(\Qp(\mu_{p^\infty})/\Qp)$;
\item a saturated triangulation of $(\vp,\Gamma)$-modules 
\begin{equation}
\label{eq:filtration}
0\lra \sF^+\DD(\TT_\f)\lra \DD(\TT_\f)\lra \sF^-\DD(\TT_\f)\lra 0.
\end{equation}
\end{itemize}

\begin{defn}\label{defn_pottharst_selmer_on_B_r}
For any $r<r_0$, let us define $\TT_{\f}\vert_{B(r)}:=\TT_{\f}\otimes_{\LL_{k,r_0}} \mathscr{A}(r)$, which is a  free $\mathscr{A}(r)$-module of rank $2$. We define the Pottharst Selmer complex $$\widetilde{{\bf R}\Gamma}_{\rm f}(G_{K_{c_0},\Sigma},\TT_{\f}\vert_{B(r)};\Delta_{\bblambda})\in D_{\rm perf}^{[0,3]} (_{\mathscr{A}(r)}{\rm Mod})$$
with the local conditions $\Delta_{\bblambda}$ which are unramified at primes coprime to $p$ $($given as in \cite[Example 1.18]{jayanalyticfamilies}$)$ and given by strict ordinary conditions $($in the sense of \S3B in op. cit.$)$ arising from the triangulation \eqref{eq:filtration}. We denote its cohomology by $\widetilde{H}^\bullet (G_{K_{c_0},\Sigma},\TT_{\f}\vert_{B(r)};\Delta_{\bblambda})$.

 %$\widetilde{H}^1(K,\TT_{\f}\vert_{B(r)};\Delta_{\bblambda})$
\end{defn}
\begin{theorem}
\label{thm_main_GHC_in_families_with_ac}
Let $c_0$ be a positive integer coprime to $pN$. 
\item[i)] { There exists a unique class ${\frak{z}}_{\f,c_0}^\ac\in H^1(G_{K_{c_0},\Sigma},\TT_\f^\ac[1/p])\,\widehat{\otimes}_{\mathcal{H}_{0}(\widetilde{\Gamma}_\ac)}\mathcal{H}_{v(\lambda)}(\widetilde{\Gamma}_\ac)_{\sW}$ such that
$${\frak{z}}_{\f,c_0}^\ac(\kappa)={\frak{z}}_{\f(\kappa),c_0}^\ac\quad, \qquad  \forall \kappa\in B^\circ(r_0)_{\rm cl}\,. $$
Moreover, if \ref{item_Int_PR} holds true, then ${\frak{z}}_{\f,c_0}^\ac\in H^1(G_{K_{c_0},\Sigma},\TT_\f^\ac)\,\widehat{\otimes}_{\ZZ_p[[\widetilde{\Gamma}_\ac]]}\,p^{c(\lambda)}\mathcal{H}_{v(\lambda)}^+(\widetilde{\Gamma}_\ac)_{\sW}$.}
\item[ii)] Let us set ${\frak{z}}_{\f,c_0}:=\mathds{1}({\frak{z}}_{\f,c_0}^\ac)\in H^1(G_{K_{c_0},\Sigma},\TT_\f)\otimes_{}{\sW}$. For any $r<r_0$, we have
$${\frak{z}}_{\f,c_0}\vert_{B(r)} \in \widetilde{H}^1(G_{K_{c_0},\Sigma},\TT_{\f}\vert_{B(r)};\Delta_{\bblambda})\otimes_{}{\sW[1/p]}\,.$$
\end{theorem}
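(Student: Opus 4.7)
The plan is to deduce both parts from Theorem~\ref{thm_main_GHCinterpolate}, which already provides the key building blocks. For the existence in part (i), under \ref{item_Int_PR} the class is produced directly by Theorem~\ref{thm_main_GHCinterpolate}(ii); without \ref{item_Int_PR}, one instead invokes the strategy outlined in Remark~\ref{rem_JLZ}, using (a version of) the Jetchev--Loeffler--Zerbes reciprocity law for $\sL_{\p,\psi}(\f)$ to manufacture a global class whose restriction at $\p$ matches the semi-local class ${\pmb\zeta}^\ac_{\f,c_0}$ of Theorem~\ref{thm_main_GHCinterpolate}(i). In both cases the interpolation identity ${\frak{z}}_{\f,c_0}^\ac(\kappa)={\frak{z}}_{\f(\kappa),c_0}^\ac$ is then forced, at those classical $\kappa$ where uniqueness holds, by Theorem~\ref{thm_kobayashi_ota_ac_interpolation} together with Theorem~\ref{thm_GHC_rec_law_g}.

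The technical heart of part (i) is therefore to remove the torsionness hypothesis on $\widetilde{H}^2(G_{K_{c_0},\Sigma},\TT_\f^\ac;\Delta(\p,\p^c))$ that was imposed in Theorem~\ref{thm_main_GHCinterpolate}(iii), so that uniqueness applies universally and the interpolation property propagates to all of $B^\circ(r_0)_{\rm cl}$. By the Poitou--Tate global duality built into the Nekov\'a\v{r}--Pottharst formalism, this torsionness is equivalent to the vanishing $\widetilde{H}^1(G_{K_{c_0},\Sigma},\TT_\f^\ac;\Delta(\p,\p^c))=0$. I would prove the latter by a standard control argument: exhibit one classical weight $\kappa_0\in B^\circ(r_0)_{\rm cl}$ at which $\widetilde{H}^1(G_{K_{c_0},\Sigma},T_{\f(\kappa_0)}^\ac;\Delta(\p,\p^c))$ vanishes, then transfer the vanishing to the family via the base-change compatibility of the Selmer complex (shrinking $r_0$ if necessary so that the specialization map is well-defined). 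The classical-fibre vanishing is available thanks to the strong Heegner hypothesis, the big image condition \ref{item_BI}, and the nontriviality of the Kobayashi class $\bz_{g,c_0,\lambda_g}^\ac$ (Theorem~\ref{thm_kobayashi_ota_ac_interpolation}), which together with Hsieh's arguments force the one-sided divisibility in the anticyclotomic main conjecture strong enough to conclude the $\widetilde H^1$ vanishing.

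For part (ii), the containment ${\frak{z}}_{\f,c_0}\vert_{B(r)}\in \widetilde{H}^1(G_{K_{c_0},\Sigma},\TT_\f\vert_{B(r)};\Delta_{\bblambda})$ is essentially a reformulation of Theorem~\ref{thm_main_GHCinterpolate}(i) for the rigid analytic family over $B(r)$. Outside $p$ the class is unramified by construction (it is built from Heegner data at tame level $c_0N$). At $\p^c$ the strict ordinary local condition is the whole cohomology, so there is nothing to check. The only substantive check is at $\p$: one must show that $\res_\p({\frak{z}}_{\f,c_0}\vert_{B(r)})$ lies in $H^1(K_{c_0,\p},\sF^+\DD(\TT_\f\vert_{B(r)}))$. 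This is immediate after specializing the anticyclotomic variable at $\mathds{1}$ in the identity ${\pmb\zeta}^\ac_{\f,c_0}=\EXP_{\f,c_0}(\eta_\f\otimes \mathcal{L}_{\f,c_0}^\iota)$ of Theorem~\ref{thm_main_GHCinterpolate}(i), because the image of the Perrin-Riou big exponential map in Appendix~\ref{sec:biglogalaonganticyclotower} factors through the $\sF^+$ part of the triangulation~\eqref{eq:filtration} by construction; the further restriction from $r_0$ to $r<r_0$ is harmless since it corresponds to base change along the open immersion $B(r)\hookrightarrow B(r_0)$.

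The principal obstacle in this plan is the Selmer-complex vanishing $\widetilde{H}^1(G_{K_{c_0},\Sigma},\TT_\f^\ac;\Delta(\p,\p^c))=0$, since it requires locating a classical specialization where the anticyclotomic main conjecture inputs (in the positive-slope crystalline setting) are sharp enough to yield the desired vanishing, and then executing a clean Pottharst-style base-change argument to transport this vanishing to the whole Coleman family. Everything else in the argument is a formal consequence of Theorem~\ref{thm_main_GHCinterpolate} combined with the local image description of $\EXP_{\f,c_0}$.
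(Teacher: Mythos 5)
Your strategy for part (i) is close to the paper's in outline (reduce to $\widetilde H^1(G_{K_{c_0},\Sigma},\TT_\f^\ac;\Delta(\p,\p^c))=0$, handle the fibre-level Selmer vanishing via Hsieh's nonvanishing, Kobayashi's theorem and the Mazur--Rubin core-rank formalism), though the paper verifies vanishing at every specialization $(\kappa,\phi_0)$ rather than at a single weight $\kappa_0$ followed by Nakayama. Your one-fibre version can work, but only if $P_{\kappa_0}$ lies in the maximal ideal of $\LL_{(k,r_0)}$, which essentially forces $\kappa_0=k$; and the fibre-level statement $\widetilde H^1(G_{K_{c_0},\Sigma},T_{\f(\kappa_0)}^\ac;\Delta(\p,\p^c))=0$ over the anticyclotomic tower itself requires the same density-over-$\phi_0$ apparatus the paper deploys. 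The phrasing via ``one-sided divisibility in the anticyclotomic main conjecture'' is imprecise -- the paper uses the main theorem of \cite{kobayashiGHC} (Bloch--Kato Selmer vanishing at twists $\phi$ where the BDP $L$-value is nonzero), not the main conjecture -- but your ingredients are the right ones.

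There is a genuine gap in your argument for part (ii), and it stems from conflating the two different Selmer complexes in the paper. The local condition $\Delta_\bblambda$ of Definition~\ref{defn_pottharst_selmer_on_B_r} is the Pottharst \emph{strict ordinary} condition arising from the triangulation \eqref{eq:filtration}, imposed at \emph{both} $\p$ and $\p^c$; it is not the Greenberg-type $\Delta(\p,\p^c)$ condition of Definition~\ref{defn_selmer_complex} (which is relaxed at $\p^c$). So your claim ``at $\p^c$ the strict ordinary local condition is the whole cohomology, so there is nothing to check'' is false. Your remaining argument, that the $\sF^+$-membership at $\p$ is ``immediate'' because $\EXP_{\f,c_0}$ factors through $\HIw$ of $\sF^+\DD(\TT_\f)$, only controls the $\p$-local restriction -- by construction $\EXP_{\f,c_0}$ is the semi-local map over primes above $\p$ alone (see \S\ref{subsec_semilocalPR}), so it gives no information about $\res_{\p^c}$. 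The paper instead pairs $\res_p^{/+}(\frak{z}_{\f,c_0}\vert_{B(r)})$ against $\omega_\f$ via Nakamura's dual exponential, summing over \emph{all} primes above $p$, obtains zero at classical points from the Bloch--Kato membership of the genuine Heegner classes, and then deduces vanishing of $\res_p^{/+}$ by proving injectivity of the pairing (the flatness claim $(\dagger)$, established via KPX's Euler characteristic formula, Colmez's $H^0$-criterion and a ${\rm Tor}_1$ computation after shrinking $r$). That injectivity lemma is a substantive step you do not address, and without it -- or an independent $\p^c$-local control -- your argument does not establish the conclusion of part (ii).
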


{Before we prove this theorem, we first explain how to descend the coefficients of the universal Heegner cycle ${\frak{z}}_{\f,c_0}^\ac$  from $\sW[1/p]$ to $L$:
\begin{proposition}
\label{prop_main_GHC_in_families_with_ac_descended_to_L}
Let $c_0$ be a positive integer coprime to $pN$. Let ${\frak{z}}_{\f,c_0}^\ac$ and ${\frak{z}}_{\f,c_0}\vert_{B(r)}$ be as in Theorem~\ref{thm_main_GHC_in_families_with_ac}.
\item[i)] { We have ${\frak{z}}_{\f,c_0}^\ac\in H^1(G_{K_{c_0},\Sigma},\TT_\f^\ac[1/p])\,\widehat{\otimes}_{\mathcal{H}_{0}(\widetilde{\Gamma}_\ac)}\mathcal{H}_{v(\lambda)}(\widetilde{\Gamma}_\ac)$. Moreover, if \ref{item_Int_PR} holds true, then we have ${\frak{z}}_{\f,c_0}^\ac\in H^1(G_{K_{c_0},\Sigma},\TT_\f^\ac)\,\widehat{\otimes}_{\ZZ_p[[\widetilde{\Gamma}_\ac]]}\,p^{c(\lambda)}\mathcal{H}_{v(\lambda)}^+(\widetilde{\Gamma}_\ac)$.}

\item[ii)] ${\frak{z}}_{\f,c_0}\vert_{B(r)} \in \widetilde{H}^1(G_{K_{c_0},\Sigma},\TT_{\f}\vert_{B(r)};\Delta_{\bblambda}).$
\end{proposition}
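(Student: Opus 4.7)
The plan is a straightforward Galois descent along $\sW[1/p]/L$. Let $\sigma$ be a topological generator of $\Gal(\sW/\cO_L)\cong \widehat{\ZZ}$. Since $\sW[1/p]/L$ is faithfully flat and ind-Galois, one has $(V\otimes_{L}\sW[1/p])^{\sigma}=V$ for any $L$-module $V$, and the analogous equality persists after completed tensor products with the distribution algebras $\cH_{v(\lambda)}(\widetilde{\Gamma}_\ac)$ and $\cH_{v(\lambda)}^+(\widetilde{\Gamma}_\ac)$, as well as for the Galois cohomology and Selmer groups that appear in Theorem~\ref{thm_main_GHC_in_families_with_ac}, since these are obtained from $L$-linear data by base change.

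First I observe that for every classical $\kappa\in B^\circ(r_0)_{\rm cl}$, the class ${\frak{z}}_{\f(\kappa),c_0}^\ac$ of Definition~\ref{defn_pstabilizedGHC_levelNp} (and Theorem~\ref{thm_kobayashi_ota_ac_interpolation}) already lies in $H^1(G_{K_{c_0},\Sigma},T_{\f(\kappa)}^\ac)\,\widehat{\otimes}_{\LL(\widetilde{\Gamma}_\ac)}\,\cH_{v(\lambda)}^+(\widetilde{\Gamma}_\ac)$ with $\cO_L$-coefficients (rather than $\sW$-coefficients), since Kobayashi's construction is intrinsic to the $p$-adic Galois cohomology of Deligne's $L$-rational representation and the Perrin-Riou twists used in its proof are defined over $L$. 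In particular, each ${\frak{z}}_{\f(\kappa),c_0}^\ac$ is $\sigma$-fixed. Next I compare ${\frak{z}}_{\f,c_0}^\ac$ with its Frobenius translate $\sigma({\frak{z}}_{\f,c_0}^\ac)$: both live in the $\sW[1/p]$-module $H^1(G_{K_{c_0},\Sigma},\TT_\f^\ac[1/p])\,\widehat{\otimes}_{\cH_{0}(\widetilde{\Gamma}_\ac)}\cH_{v(\lambda)}(\widetilde{\Gamma}_\ac)_{\sW}$, and the specialization maps $\pi_\kappa$ are $\sigma$-equivariant because they are base changes of the $L$-defined maps $\pi_\kappa\colon \LL_{(k,r_0)}\to\cO_L$. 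Therefore $\sigma({\frak{z}}_{\f,c_0}^\ac)(\kappa)=\sigma({\frak{z}}_{\f(\kappa),c_0}^\ac)={\frak{z}}_{\f(\kappa),c_0}^\ac$ for every classical $\kappa$. The uniqueness clause in Theorem~\ref{thm_main_GHC_in_families_with_ac}(i) --- which rests only on Corollary~\ref{cor_lemma_amice_velu_in_families} --- then forces $\sigma({\frak{z}}_{\f,c_0}^\ac)={\frak{z}}_{\f,c_0}^\ac$, and faithfully flat descent yields the first assertion of (i). Under \ref{item_Int_PR}, the same $\sigma$-invariance applied to the integral class, combined with the fact that the $\sigma$-fixed part of $p^{c(\lambda)}\cH^+_{v(\lambda)}(\widetilde{\Gamma}_\ac)_{\sW}$ is $p^{c(\lambda)}\cH^+_{v(\lambda)}(\widetilde{\Gamma}_\ac)$, delivers the integral refinement.

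Part (ii) now follows mechanically. The Pottharst Selmer complex $\widetilde{{\bf R}\Gamma}_{\rm f}(G_{K_{c_0},\Sigma},\TT_\f\vert_{B(r)};\Delta_{\bblambda})$ is built from $(\vp,\Gamma)$-module data over the relative Robba ring $\RR_{\sA(r)}$, all of which is defined over $L$, so its formation commutes with the flat base change from $L$ to $\sW[1/p]$. The $\sigma$-equivariance established above, together with faithfully flat descent, then shows that ${\frak{z}}_{\f,c_0}\vert_{B(r)}$ lies in the Pottharst Selmer group $\widetilde{H}^1(G_{K_{c_0},\Sigma},\TT_\f\vert_{B(r)};\Delta_{\bblambda})$ with $L$-coefficients. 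The main technical point to verify is this compatibility between the Selmer complex and the base change along $L\hookrightarrow \sW[1/p]$; this is standard for strict-ordinary local conditions arising from a triangulation already defined over $\mathscr{A}(r)$, and is the only step that requires any real care.
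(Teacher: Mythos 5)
Your proof follows essentially the same route as the paper's: both establish that the universal class is Frobenius-invariant over $\sW[1/p]/L$ by observing that its specializations ${\frak{z}}_{\f(\kappa),c_0}^\ac$ at classical $\kappa$ are already $L$-rational, then conclude using the vanishing from Corollary~\ref{cor_lemma_amice_velu_in_families}. The paper phrases the final step directly — showing $(1-\tau){\frak{z}}_{\f,c_0}^\ac$ lies in $\bigcap_\kappa P_\kappa \cdot H^1(\cdots)=0$ for $\tau$ the Frobenius in $\Gal(L\QQ_p^\ur/\QQ_p)$, using that $H^1(G_{K_{c_0},\Sigma},\TT_\f^\ac)$ is finitely generated and torsion-free — which is exactly what your appeal to the ``uniqueness clause'' unwinds to, while sidestepping the continuous Galois-descent formalities you set up at the start.
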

\begin{proof}%[Proof of Proposition~\ref{prop_main_GHC_in_families_with_ac_descended_to_L}]
Let $\tau \in \Gal(L\QQ_p^\ur/\QQ_p)$ denote the Frobenius element. Proposition~\ref{prop_main_GHC_in_families_with_ac_descended_to_L} follows once we verify that 
$$(1-\tau){\frak{z}}_{\f,c_0}=0\,.$$ 
Since we have 
$${ {\frak{z}}_{\f(\kappa),c_0}^\ac\in H^1(G_{K_{c_0},\Sigma},T_{\f(\kappa)}^{\ac}[1/p])\widehat{\otimes}_{\mathcal{H}_{0}(\widetilde{\Gamma}_\ac)}\mathcal{H}_{v(\lambda)}(\widetilde{\Gamma}_\ac),}$$ 
it follows from Theorem~\ref{thm_main_GHC_in_families_with_ac}(ii) that
$$(1-\tau){\frak{z}}_{\f,c_0}(\kappa)=0$$
for every $\kappa\in  B^\circ(r_0)_{\rm cl}$. This shows that 
$${ (1-\tau){\frak{z}}_{\f,c_0} \in \bigcap_{\kappa\in B^\circ(r_0)_{\rm cl}}\,\, P_\kappa\cdot H^1(G_{K_{c_0},\Sigma},\TT_\f^\ac[1/p])\,\,\widehat{\otimes}_{\mathcal{H}_{0}(\widetilde{\Gamma}_\ac)}\mathcal{H}_{v(\lambda)}(\widetilde{\Gamma}_\ac)_{\sW}=0}$$
where the vanishing of the intersection follows from Corollary~\ref{cor_lemma_amice_velu_in_families}, thanks to the fact that the $\LL_{(k,r_0)}\,\widehat{\otimes}\,\ZZ_p[[\widetilde{\Gamma}_\ac]]$-module  ${H}^1(G_{K_{c_0},\Sigma},\TT_{\f}^\ac)$ is finitely generated and torsion-free. %self-remark: As such, by the structure theory, it embeds into a free module: first, it embeds into a reflexive module, and any reflexive module embeds into a free module: See https://stacks.math.columbia.edu/tag/0AUY , Lemma 15.23.6.
\end{proof}}
We introduce a set of Hecke characters which will be useful in the proof of Theorem~\ref{thm_main_GHC_in_families_with_ac}.
\begin{defn}
\label{defn_non_vanish_locus_of_BDP}
For any $\kappa\in B^\circ(r_0)_{\rm cl}$, let us write $\mathscr{N}_{\kappa}$ as the set of ring class characters $\chi$ of $p$-power conductor for which we have $\mathcal{L}_{\f(\kappa)^\circ}(\chi)\neq 0$. More generally, for a positive integer $c_0$,
let us also define the subset $\mathscr{N}_{\kappa}^{(c_0)}$ of $\mathscr{N}_{\kappa}$ as the set of ring class characters $\chi$ of $p$-power conductor for which we have $\mathcal{L}_{\f(\kappa)^\circ,c_0}(\chi\eta)\neq 0$ for each ring class character $\eta$ of conductor $c_0$. 
\end{defn}

It follows from \cite[Theorem C]{hsiehnonvanishing} that all but finitely many ring class characters $\chi$ of $p$-power conductor belongs to $\mathscr{N}_{\kappa}^{(c_0)}$. We recall the family $\Psi$ of anticyclotomic Hecke characters we have introduced in Section~\ref{subsec_anticyclo_padic_L_in_families} above, as well as the anticyclotomic Hecke character $\Psi_\kappa$ with infinity type $(\frac{\kappa}{2},-\frac{\kappa}{2})$ denotes its specialization in weight $\kappa$.
\begin{defn}
For any $\kappa\in B^\circ(r_0)_{\rm cl}$, we set $\mathscr{S}_{\kappa}:=\{\chi\Psi_\kappa: \chi\in \mathscr{N}_\kappa\}$\,. Also for each positive integer $c_0$, we define the subset $\mathscr{S}_{\kappa}^{(c_0)}:=\{\chi\Psi_\kappa: \chi\in \mathscr{N}_\kappa^{(c_0)}\}\subset \mathscr{S}_{\kappa}$.
\end{defn}

\begin{proof}[Proof of Theorem~\ref{thm_main_GHC_in_families_with_ac}]
\item[i)] This portion follows from Theorem~\ref{thm_main_GHCinterpolate}{(iii) and Remark~\ref{rem_JLZ}} once we verify that $\widetilde{H}^1(G_{K_{c_0},\Sigma},\TT_{\f}^\ac;\Delta(\p,\p^c))=0$. It suffices to show that 
\begin{equation}
\label{equation_desired_vanishing_opposite_Greenberg}
\widetilde{H}^1(G_{K_{c_0},\Sigma},T_{\f(\kappa)}\otimes\widehat{\phi}_0;\Delta(\p,\p^c))=0
\end{equation}
for every $\kappa\in B^\circ(r_0)_{\rm cl}$ and for $\phi_0\in \mathscr{S}_\kappa^{(c_0)}$. %This is what we shall verify next. 
To that end, we fix an arbitrary $\kappa \in B^\circ(r_0)_{\rm cl}$ and a ring class character $\eta$ of conductor $c_0$. Let us set $\phi:=\phi_0\eta$, $T_{\phi}:=T_{\f(\kappa)}\otimes\widehat{\phi}$ and $A_\phi:=T_{\phi}\otimes\QQ_p/\ZZ_p$ to ease notation. We will prove below that 
\begin{equation}
\label{equation_desired_vanishing_opposite_Greenberg_1}
\widetilde{H}^1(G_{K,\Sigma},T_{\f(\kappa)}\otimes\widehat{\phi};\Delta(\p,\p^c))=0.
\end{equation}
for all $\phi\in \mathscr{S}_\kappa^{(\eta)}:=\{\eta\phi_0:\phi_\in \mathscr{S}_\kappa^{(c_0)}\}$. As $\eta$ varies in \eqref{equation_desired_vanishing_opposite_Greenberg_1}, the equation \eqref{equation_desired_vanishing_opposite_Greenberg} follows.

For each anticyclotomic Hecke character $\phi$ of infinity type $(\frac{\kappa}{2},-\frac{\kappa}{2})$, let us consider the canonical Selmer structure $\FFF_{\rm can}$ on $T_{\phi}$, given as in \cite[Definition 3.2.1]{mr02} (which requires no conditions at primes above $p$ and $p$-saturation of unramified conditions at primes away from $p$). Let us also write $\FFF_{({\p},{\p^c})}$ for the Selmer structure on $T_{\phi}$, which one obtains by replacing the local conditions at $\p$ by the strict local conditions and relaxing the local conditions at $\p^c$. By the defining property of $\widetilde{H}^1(G_{K,\Sigma},T_{\phi};\Delta(\p,\p^c))$, we have an injection
$$\widetilde{H}^1(G_{K,\Sigma},T_{\phi};\Delta(\p,\p^c))\hookrightarrow H^1_{\FFF_{({\p},{\p^c})}}(K,T_{\phi})$$ 
into the Selmer group attached to $\FFF_{({\p},{\p^c})}$, with finite cokernel (which is annihilated by the Tamagawa numbers for the Galois representation $T_{\phi}$ away from $p$). It therefore suffices to prove that $H^1_{\FFF_{({\p},{\p^c})}}(K,T_{\phi})=0$  for all $\phi \in \mathscr{S}_\kappa^{(\eta)}$.

Let $\FFF_{({\p},{\p^c})}^*$ denote the dual Selmer structure on 
$$T_{\phi}^{\vee}(1)=T_{\f(\kappa)}^{\vee}(1)\otimes\widehat{\psi}^{-1}=A_{\phi^{-1}}$$
where $(-)^{\vee}:={\rm Hom}(-,\QQ_p/\ZZ_p)$ is the Pontryagin duality functor and the second equality follows thanks to the self-duality of $T_{\f(\kappa)}$.

It follows from \cite[Theorem 5.2.15]{mr02} that we have $\chi(\FFF_{\rm can},T_{\phi})=2$ for the core Selmer rank of the Selmer structure $\FFF_{\rm can}$ on $T_{\phi}$. Combining this fact with \cite[Proposition 1.6]{wiles}, one deduces that 
$$\chi(\FFF_{(\p,\p^c)},T_{\phi})=0=\chi(\FFF_{(\p,\p^c)}^*,A_{\phi^{-1}}).$$ 
By \cite[Corollary 5.2.6]{mr02}, it therefore suffices to prove that $H^1_{\FFF_{({\p},{\p^c})}^*}(K,A_{\phi^{-1}})$ has finite cardinality for all $\phi \in \mathscr{S}_\kappa^{(\eta)}$, which we carry next. Let us write $L_\phi$ for the extension of $L$ generated by the values of $\widehat{\phi}$ and fix a uniformizer $\varpi_\phi\in L_\phi$ and notice that 
$$H^1_{\FFF_{({\p},{\p^c})}^*}(K,A_{\phi^{-1}})[\varpi_\phi^s]=H^1_{\FFF_{({\p},{\p^c})}^*}(K,A_{\phi^{-1}}[\varpi_\phi^s])=H^1_{\FFF_{({\p},{\p^c})}^*}(K,T_{\phi^{-1}}/\varpi^s_\psi T_{\phi^{-1}})$$
for each positive integer $s$, where the first equality is  \cite[Lemma 3.5.3]{mr02}. In order to prove that $H^1_{\FFF_{({\p},{\p^c})}^*}(K,A_{\phi^{-1}})$ has finite cardinality for all $\phi \in \mathscr{S}_\kappa^{(\eta)}$, it therefore suffices to show that 
 $${\rm length}_{\ZZ_p}\left(H^1_{\FFF_{({\p},{\p^c})}^*}(K,T_{\phi^{-1}}/\varpi^s_\psi T_{\phi^{-1}})\right)=O(1)$$
is bounded independently of $s$. 

Let us now consider the Selmer structure $\FFF_{(\p^c,\p)}$ on $T_{\phi^{-1}}$, which is obtained by replacing the local conditions determined by $\FFF_{\rm can}$ on $T_{\phi^{-1}}$ by strict local conditions at $\p^c$ and relaxed local condition at $\p$. We shall compare
\begin{itemize}
\item the induced Selmer structure $\FFF_{(\p^c,\p)}$ on $T_{\phi^{-1}}/\varpi_\phi^sT_{\phi^{-1}}$ via the natural map 
$$\pi_{\phi,\Sigma}:T_{\phi^{-1}}\twoheadrightarrow T_{\phi^{-1}}/\varpi_\phi^sT_{\phi^{-1}},$$ 
\item and the Selmer structure $\FFF_{(\p^c,\p)}^*$ on $T_{\phi^{-1}}/\varpi_\phi^sT_{\phi^{-1}}$ induced via the map 
$$\iota_{\phi,\Sigma}: T_{\phi^{-1}}/\varpi_\phi^sT_{\phi^{-1}}\stackrel{\sim}{\lra} \varpi_\phi^{-s}T_{\phi^{-1}}/T_{\phi^{-1}}\hookrightarrow A_{\phi^{-1}}=T_{\phi}^\vee(1)\,.$$
\end{itemize}
For any prime $\lambda$ of $K$ not dividing $p$, it follows from \cite[Lemma I.3.8(i)]{rubin00} that 
$$H^1_{\FFF_{({\p},{\p^c})}^*}(K_\lambda,T_{\phi^{-1}}/\varpi_\phi^sT_{\phi^{-1}}) =H^1_{\FFF_{(\p^c,\p)}}(K_\lambda,T_{\phi^{-1}}/\varpi_\phi^sT_{\phi^{-1}})\,.$$
{\bf The prime $\p^c$:} We have
\begin{align*}
H^1_{\FFF_{({\p},{\p^c})}^*}(K_{\p^c},T_{\phi^{-1}}/\varpi_\phi^sT_{\phi^{-1}})&\stackrel{\rm def}{=}\iota_{\phi,\Sigma}^{-1}\left(H^1_{\FFF_{({\p},{\p^c})}^*}(K_{\p^c},A_{\phi^{-1}})\right)\\
&=\ker\left(H^1(K_{\p^c},T_{\phi^{-1}}/\varpi_\phi^sT_{\phi^{-1}}) \stackrel{\iota_{\phi,\Sigma}}{\lra} H^1(K_{\p^c},A_{\phi^{-1}})\right)\\
&={\rm coker}\left(H^0(K_{\p^c},A_{\phi^{-1}}) \stackrel{\varpi_{\phi}^s}{\lra} H^0(K_{\p^c},A_{\phi^{-1}})\right)%=0
\end{align*}
has size bounded independently of $s$ (since $H^0(K_{\p^c},T_{\phi^{-1}})=0$ and therefore, $H^0(K_{\p^c},A_{\phi^{-1}})$ has finite cardinality). Moreover,
%{since we assume $\overline{\rho}_f\vert_{G_{\QQ_p}}$ is absolutely irreducible} and 
\begin{align*}
H^1_{\FFF_{(\p^c,\p)}}(K_{\p^c},T_{\phi^{-1}}/\varpi_\phi^sT_{\phi^{-1}})&=\pi_{\phi,\Sigma}\left(H^1_{\FFF_{(\p^c,\p)}}(K_{\p^c},T_{\phi^{-1}})\right)\stackrel{\rm def}{=}0\,.\end{align*}
{\bf The prime $\p$:} We have
\begin{align*}
H^1_{\FFF_{({\p},{\p^c})}^*}(K_{\p},T_{\phi^{-1}}/\varpi_\phi^sT_{\phi^{-1}})&\stackrel{\rm def}{=}\iota_{\phi,\Sigma}^{-1}\left(H^1_{\FFF_{({\p},{\p^c})}^*}(K_{\p},A_{\phi^{-1}})\right)\\
&\stackrel{\rm def}{=}\iota_{\phi,\Sigma}^{-1}\left(H^1(K_{\p},A_{\phi^{-1}})\right)\\
&=H^1(K_{\p},T_{\phi^{-1}}/\varpi_\phi^sT_{\phi^{-1}})
\end{align*}
and 
\begin{align*}
H^1_{\FFF_{(\p^c,\p)}}(K_{\p},T_{\phi^{-1}}/\varpi_\phi^sT_{\phi^{-1}})&\stackrel{\rm def}{=}\pi_{\phi,\Sigma}\left(H^1_{\FFF_{(\p^c,\p)}}(K_{\p},T_{\phi^{-1}})\right)\\
&\stackrel{\rm def}{=}\pi_{\phi,\Sigma}\left(H^1(K_{\p},T_{\phi^{-1}})\right)\,.\\
%&=H^1(K_{\p},T_{\phi^{-1}}/\varpi_\phi^sT_{\phi^{-1}})
\end{align*}
%where the final equality is because $H^2(K_\p,T_{\phi^{-1}}/\varpi_\phi T_{\phi^{-1}})=H^0(K_\p,T_{\phi^{-1}}/\varpi_\phi T_{\phi^{-1}})=0$ ({since we assume $\overline{\rho}_f\vert_{G_{\QQ_p}}$ is absolutely irreducible}). 
In summary, we have an exact sequence 
$$ 0\lra H^1_{\FFF_{(\p^c,\p)}}(K,T_{\phi^{-1}}/\varpi_\phi^sT_{\phi^{-1}})\lra H^1_{\FFF_{({\p},{\p^c})}^*}(K,T_{\phi^{-1}}/\varpi_\phi^sT_{\phi^{-1}})\lra \mathscr{C}$$
where 
\begin{align*}
\mathscr{C}&=\frac{H^1(K_{\p},T_{\phi^{-1}}/\varpi_\phi^sT_{\phi^{-1}})}{\pi_{\phi,\Sigma}\left(H^1(K_{\p},T_{\phi^{-1}})\right)}\oplus {\rm coker}\left(H^0(K_{\p^c},A_{\phi^{-1}}) \stackrel{\varpi_{\phi}^s}{\lra} H^0(K_{\p^c},A_{\phi^{-1}})\right)\\
&\hookrightarrow H^2(K_\p,T_{\phi}^{-1})[\varpi_{\phi}^s]\oplus {\rm coker}\left(H^0(K_{\p^c},A_{\phi^{-1}}) \stackrel{\varpi_{\phi}^s}{\lra} H^0(K_{\p^c},A_{\phi^{-1}})\right).
\end{align*}
has cardinality bounded independently of $s$. So, we are reduced to proving that 
 $${\rm length}_{\ZZ_p}\left(H^1_{\FFF_{(\p^c,\p)}}(K,T_{\phi^{-1}}/\varpi^s_\psi T_{\phi^{-1}})\right)=O(1)$$
for all $\phi \in \mathscr{S}_\kappa^{(\eta)}$. Since  $H^0(K,A_{\phi^{-1}})=0$, this is equivalent to showing that 
\begin{equation}
\label{eqn_desired_vanishing_Greenberg}
H^1_{\FFF_{(\p^c,\p)}}(K,T_{\phi^{-1}})=0 
\end{equation}
for all $\phi \in \mathscr{S}_\kappa^{(\eta)}$ by \cite[Lemma 3.7.1]{mr02}. The equation \eqref{eqn_desired_vanishing_Greenberg} follows from the main theorem of \cite{kobayashiGHC} since $H^1_{\FFF_{(\p^c,\p)}}(K,T_{\phi^{-1}})$ is precisely the Bloch-Kato Selmer group associated to $T_{\phi^{-1}}$ (as explained in the proof of \cite[Theorem~6.2]{CastellaHsiehGHC}) and   $\mathcal{L}_{\f(\kappa),c_0}(\chi\eta)\neq 0$ for all $\phi \in  \mathscr{S}_\kappa^{(\eta)}$.

\item[ii)] For each crystalline point $\kappa\in B(r)$, consider the commutative diagram
$$\xymatrix{{H}^1(G_{K_{c_0},\Sigma},\TT_{\f}\vert_{B(r)})\ar[r]^(.47){\res_p^{/+}}\ar[d]_{\pi_\kappa}&H^1(K_{c_0,p},\sF^-\DD(\TT_\f\vert_{B(r)}))\ar[d]^{\pi_\kappa}\ar[rr]^(.5){\langle \EXP^*(-)\,,\,\omega_{\f} \rangle}&&K_{c_0}\otimes_K\left(\mathscr{A}(r)\oplus \mathscr{A}(r)\ar[d]^{\pi_\kappa^{\oplus2}}\right)\\
H^1(G_{K_{c_0},\Sigma},V_{\f(\kappa)})\ar[r]_{\res_p^{/\rm f}}&H^1_s(K_{c_0,p},V_{\f(\kappa)})\ar[rr]_(.5){\langle \exp^*(-)\,,\,\omega_{\f(\kappa)} \rangle}^(.52){\sim}&& K_{c_0}\otimes_K\left(E\oplus E\right),
}$$
where {$\EXP^*=\bigoplus_{\p_i|p} \EXP_{\p_i}^*$ is the direct sum of Nakamura's dual exponential map associated to the (dual of the) $(\varphi,\Gamma)$-module $\sF^-\DD(\TT_\f\vert_{B(r)})$ of rank one, given as  in \cite[P.360 (just after equation (23))]{nakamuraepsilon} as $\p_i$ run over primes of $K_{c_0}$ above $p$, whereas $\exp^*$ is  the Bloch--Kato dual exponential map as in \cite[\S II.1]{kato93} and} the tensor products on the right are over the embeddings $K\stackrel{\iota_p}{\hookrightarrow}\QQ_p\subset E$. This diagram tells us that 
$$\pi_\kappa^{\oplus 2}\left( \langle \EXP^*({\frak{z}}_{\f,c_0}\vert_{B(r)})\,,\,\omega_{\f} \rangle\right)=\langle \exp^*({{\frak{z}}_{\f(\kappa),c_0}})\,,\,\omega_{\f(\kappa)} \rangle=0\,.$$ 
This shows, by the density of such $\kappa$ in $B(r)$, that
$$\langle \EXP^*({\frak{z}}_{\f,c_0}\vert_{B(r)})\,,\,\omega_{\f} \rangle=0\,.$$
{The assertion in (ii) is equivalent to the requirement that 
${\frak{z}}_{\f,c_0}\vert_{B(r)} \in \ker(\res_p^{/+})$. To verify that, we need to check that the map $\langle \EXP^*(-)\,,\,\omega_{\f} \rangle$ is injective. } 

{For any ring class character $\chi$ with conductor dividing $c_0$, we let $\langle \EXP^*(-)\,,\,\omega_{\f} \rangle^{(\chi)}$ denote the $\chi$-isotypic component of this map. To prove that the map $\langle \EXP^*(-)\,,\,\omega_{\f} \rangle$ is injective, it suffices to show that $\langle \EXP^*(-)\,,\,\omega_{\f} \rangle^{(\chi)}$ is injective for every $\chi$ as above. Thanks to the right half of the commutative diagram above, the map $\langle \EXP^*(-)\,,\,\omega_{\f} \rangle^{(\chi)}$ is non-zero. To verify that it is injective, it remains to check that 
\begin{itemize}
    \item[$(\dagger)$] the $\sA$-module $H^1(\QQ_p,\sF^-\DD(\TT_\f\vert_{B(r)}\otimes\chi))$ is flat with rank $1$.
\end{itemize}
Here, we treat $\chi$ as a character of $G_{\QQ_p}$ via Class Field Theory and via any one of the embeddings $K\hookrightarrow \QQ_p$, which induce $G_{\QQ_p}\hookrightarrow G_K$. Let us put $D:=\sF^-\DD(\TT_\f\vert_{B(r)}\otimes\chi)$ to ease our notation. Observe that $D=\cR_{\sA}(\delta)$ is a $(\varphi,\Gamma)$-module of rank one, where the character $\delta:\QQ_p^\times \to \sA^\times$ is given as
$$\delta(p)=A_p(\f)^{-1}\chi(p)\,,$$
$$\delta(u)=u^{1-\frac{\kappa_U}{2}}, \hbox{ for } u \in \ZZ_p^\times\,.$$
With this description, it is easy to see that $H^0(\QQ_p,D)=0=H^2(\QQ_p,D)$, and it follows from the Euler characteristic formula in \cite[Theorem 4.4.5]{KPX} that the $\sA$-module $H^1(\QQ_p,D)$ has rank one. It remains to prove its flatness.}

{Let $C^{\bullet}_{\varphi,\gamma}(D)$ denote the Fontaine--Herr complex associated to $D$, as in \cite[\S2.3]{KPX}, so that $H^{i}(\QQ_p,D)=H^i(C^{\bullet}_{\varphi,\gamma}(D))$ for $i=0,1,2$. It follows from Theorem 4.4.5(i) in op. cit. that $C^{\bullet}_{\varphi,\gamma}(D)$ can be represented by a complex
$$[0\lra P_0\lra P_1\lra P_2\lra 0]$$
of projective $\sA$-modules. Since $H^0(\QQ_p,D)=0=H^2(\QQ_p,D)$, it follows that the map $P_0\to P_1$ is injective and $P_1\to P_2$ is surjective. Since $P_1$ and $P_2$ are both projective $\sA$-modules, it follows that $\ker(P_1\to P_2)$ is a projective $\sA$-direct summand of $P_1$. To conclude the proof that $H^1(\QQ_p,D)=\ker(P_1\to P_2)/{\rm im}(P_0\to P_1)$ is flat, it suffices to show that $X:={\rm coker}(P_0\to P_1)$ is flat. In other words, we are reduced to proving that 
\begin{equation}
\label{eqn_tor_X_vanishes}
    {\rm Tor}_1^{\sA}(X,\sA/\frak{m})\stackrel{?}{=}0
\end{equation}
for every $\frak{m} \in {\rm Max}(A)$. }

{Let $x=x_{\frak m}:\sA \to K_x\subset \overline{\QQ}_p$ denote the ring homomorphism that corresponds to $\frak{m}$. Let us put $D_x$ for the specialization of $D$ to $x$. Then $D_x=\cR_{K_x}(\delta_x)$ is a $(\varphi,\Gamma)$-module of rank one associated to the character $\delta_x:\QQ_p^\times \to K_x^\times$ given by
$$\delta_x(p)=A_p(\f)(x)^{-1}\chi(p)\,,$$
$$\delta_x(u)=u^{1-\frac{\kappa_U(x)}{2}}, \hbox{ for } u \in \ZZ_p^\times\,.$$
It follows from \cite[Theorem 0.2]{colmez2008} and the Euler characteristic formula for $D_x$ that $H^0(\QQ_p,D_x)=0$ so long as $A_p(\f)(x)\chi^{-1}(p)$ is not a power of $p$. We can ensure that this condition holds for every $\frak{m}_x\in {\rm Max}(\sA)$ on shrinking $r$ and assume without loss of generality that $H^0(\QQ_p,D_x)=0$. Moreover, 
\begin{align*}
    0=H^0(\QQ_p,D_x)=H^0(C^{\bullet}_{\varphi,\gamma}(D_x))&\stackrel{(\ast)}{=}H^0\left([0\lra P_0\lra P_1\lra P_2\lra 0]\stackrel{\mathbb{L}}{\otimes}_{\sA}\sA/{\frak{m}}\right)\\
    &=H^0\left([0\lra P_0\lra P_1\lra X\lra 0]\stackrel{\mathbb{L}}{\otimes}_{\sA}\sA/{\frak{m}}\right)\\
    &={\rm Tor}_1^{\sA}(X,\sA/\frak{m}),
\end{align*}
where the equality $(\ast)$ follows from \cite[Theorem 2.8]{jayanalyticfamilies}. This confirms the validity of \eqref{eqn_tor_X_vanishes} and concludes our proof that $(\dagger)$ holds true.}
\end{proof}

\begin{remark}
\label{rem_Castella_Hsieh}
An Iwasawa theoretic vanishing statement closely related to \eqref{eqn_desired_vanishing_Greenberg} was proved in  \cite[Theorem 4.14(ii)]{BFSuper}. Note however that this vanishing statement does not cover our case of interest in the current article since both factors in the Rankin--Selberg products considered in op. cit. are required to be $p$-distinguished. This additional hypotheses allows us in op. cit. to construct a full-fledged Beilinson--Flach Euler systems for $g_{/K}\otimes\theta$ over the imaginary quadratic field $K$, where $g_{/K}$ denotes the base change of the elliptic modular form $g$ to $K$ and $\theta$ for a $p$-distinguished Hecke character of $K$.  This condition was weakened in \cite{BuyukbodukLei2020_02}.
\end{remark}

\subsection{Relation to classical Heegner cycles}
Let $\f$ be the Coleman family we have fixed before. 
{For any $r<r_0$, recall the class ${\frak{z}}_{\f,c_0}\vert_{B(r)} \in \widetilde{H}^1(G_{K_{c_0},\Sigma},\TT_{\f}\vert_{B(r)};\Delta_{\bblambda})$ %\otimes_{}{\sW[1/p]}$$
we have introduced as part of the statement of Theorem~\ref{thm_main_GHC_in_families_with_ac}. We also put 
${\frak{z}}_{\f}\vert_{B(r)}:={\rm cor}_{K_1/K}\left({\frak{z}}_{\f,1}\vert_{B(r)}\right)\in \widetilde{H}^1(G_{K,\Sigma},\TT_{\f}\vert_{B(r)};\Delta_{\bblambda})$. Recall also the classical Heegner classes $z_{\f(\kappa)^\circ}\in H^1(K,V_{\f(\kappa)^\circ}^*(1))$ given as in Definition~\ref{defn_classical_heegner}. We record below the precise relation between the specializations of the ``universal Heegner class'' $\frak{z}_{\f}$ and the classical Heegner classes. This relation plays a crucial role in \cite{BPSI}.
\begin{proposition}
\label{prop_compareGHCtononpstabilizedHeegCycle}
For every positive integer $c_0$ coprime to $pN$, $r<r_0$ and $\kappa\in B(r)_{\rm cl}$, we have:
\item[i)]${\frak{z}}_{\f}\vert_{B(r)}(\kappa)=\left(1-\dfrac{p^{\frac{\kappa}{2}-1}}{\bblambda(\kappa)}\right)^2\cdot\dfrac{u_K^{-1}}{(2\sqrt{-D_K})^{\frac{\kappa}{2}-1}}\cdot\dfrac{W_{Np}\circ({\rm pr}^{\bblambda(\kappa)})^*(z_{\f(\kappa)^\circ})}{\bblambda(\kappa)\,\lambda_N(\f(\kappa)^\circ)\cE(\f(\kappa))\cE^*(\f(\kappa))}\,.$
\item[ii)]$({\rm pr^{\bblambda(\kappa)}})_*\left({\frak{z}}_{\f}\vert_{B(r)}(\kappa)\right)=\left(1-\dfrac{p^{\frac{\kappa}{2}-1}}{\bblambda(\kappa)}\right)^2\cdot\dfrac{u_K^{-1}}{(2\sqrt{-D_K})^{\frac{\kappa}{2}-1}}\cdot{\lambda_N(\f(\kappa)^\circ)^{-1}W_{N}(z_{\f(\kappa)^\circ})}$\,.
\end{proposition}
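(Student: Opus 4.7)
The plan is to reduce both identities to their classical avatars already recorded in Lemma~\ref{lemma_characterize_pstab}, by specializing the universal Heegner class to the classical weight $\kappa$. The essential input is Theorem~\ref{thm_main_GHC_in_families_with_ac}(i), which asserts that $\frak{z}_{\f,1}^{\ac}$ specializes at $\kappa\in B(r)_{\mathrm{cl}}$ to the anticyclotomic generalized Heegner class $\frak{z}_{\f(\kappa),1}^{\ac}$ attached to the $p$-stabilized eigenform $\f(\kappa)=\f(\kappa)^{\circ,\bblambda(\kappa)}$ in the sense of Definition~\ref{defn_pstabilizedGHC_levelNp}.

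First I would unwind the definitions. Since $\frak{z}_{\f}\vert_{B(r)}=\mathrm{cor}_{K_{1}/K}(\frak{z}_{\f,1}\vert_{B(r)})$ and $\frak{z}_{\f,1}=\mathds{1}(\frak{z}_{\f,1}^{\ac})$, and since specialization to a classical weight commutes with both corestriction along the finite extension $K_{1}/K$ and evaluation of the anticyclotomic variable at the trivial character, combining these observations with Theorem~\ref{thm_main_GHC_in_families_with_ac}(i) yields
\[
\frak{z}_{\f}\vert_{B(r)}(\kappa)=\mathrm{cor}_{K_{1}/K}\bigl(\frak{z}_{\f(\kappa)^{\bblambda(\kappa)},1}^{\mathds{1}}\bigr)=\frak{z}_{\f(\kappa)^{\bblambda(\kappa)}}.
\]
With this identification secured, part (i) of the proposition is an immediate consequence of Lemma~\ref{lemma_characterize_pstab}(iii) applied to $g=\f(\kappa)^{\circ}$ with $\lambda_{g}=\bblambda(\kappa)$.

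For part (ii), I would apply the pushforward $(\mathrm{pr}^{\bblambda(\kappa)})_{*}$ to both sides of the formula in (i). By the defining property of $\frak{z}_{g^{\lambda}}$ in Definition~\ref{defn_pstabilizedGHC_levelNp}(i), the left-hand side becomes $\bz_{\f(\kappa)^{\circ},\bblambda(\kappa)}^{\mathds{1}}$, and Lemma~\ref{lemma_characterize_pstab}(ii) immediately identifies this with the right-hand side of (ii). Equivalently, the only delicate point is to verify that the composite $(\mathrm{pr}^{\bblambda(\kappa)})_{*}\circ W_{Np}\circ (\mathrm{pr}^{\bblambda(\kappa)})^{*}$ acts on the $\f(\kappa)^{\circ}$-isotypic part as $\bblambda(\kappa)\,\cE(\f(\kappa))\,\cE^{*}(\f(\kappa))\cdot W_{N}$; but this is precisely the calculation extracted from \cite[Proposition~10.1.1]{KLZ2} and already carried out in the proof of Lemma~\ref{lemma_characterize_pstab}(iii). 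Consequently, there is no genuine arithmetic obstacle: all of the work is absorbed into Theorem~\ref{thm_main_GHC_in_families_with_ac}(i) and into the classical Bertolini--Darmon--Prasanna comparison between generalized and classical Heegner cycles underlying Lemma~\ref{lemma_characterize_pstab}.
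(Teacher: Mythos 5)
Your proof is correct and follows essentially the same route as the paper's: part (i) is Theorem~\ref{thm_main_GHC_in_families_with_ac}(i) combined with Lemma~\ref{lemma_characterize_pstab}(iii), and part (ii) follows by pushing forward and invoking Definition~\ref{defn_pstabilizedGHC_levelNp} together with Lemma~\ref{lemma_characterize_pstab}(ii). Your more careful unwinding of the corestriction and anticyclotomic specialization steps makes the argument slightly more explicit than the paper's, but the logical content is identical.
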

\begin{proof}
The first identity follows on combining Theorem~\ref{thm_main_GHC_in_families_with_ac}(i) and Lemma~\ref{lemma_characterize_pstab}(iii). The second identity is Theorem~\ref{thm_main_GHC_in_families_with_ac}(i) combined with the defining property of the class $\frak{z}_{g^\lambda}$ (Definition~\ref{defn_pstabilizedGHC_levelNp}) and Lemma~\ref{lemma_characterize_pstab}(ii).
\end{proof}
$\,$}

%%%%%%%%%%%%%%%%%%%%%%%%%%%%%%%%%%%%%%%%%%%%%%%%%%%%%%%

\appendix

\section{Big logarithm map along the anticyclotomic tower}
\label{sec:biglogalaonganticyclotower}

{Let $\f$ be a Coleman family as in the introduction over the affinoid disc $B(r_0)$. As in \cite{LZ1}, we denote the wide-open disc of radius $r_0$ by $U$ and we have $\LL_U=\LL_{(k,r_0)}$. We recall that $\TT_\f:=p^{-\mathscr{C}}M_U^{\circ}(\f)^*(1-\kappa_U/2)$, so that $\TT_\f(\kappa_U/2-1)$ is a $\LL_U$-lattice inside the representation $M_U(\f)^*$ of \cite{LZ1} with coefficients in $\LL_U[\frac{1}{p}]$-module.}

{ The goal of this appendix is to outline the construction of the $\Lambda_U{[1/p]}$-adic Perrin-Riou map {over relative Lubin--Tate extensions} for the representation $\TT_\f^\ac$, using the theory of $(\vp,\Gamma)$-modules from \cite{liu-CMH} and \cite{nakamura2014Jussieu}. This will allow us to define the semi-local   $\Lambda_U{[1/p]}$-adic Perrin-Riou map utilized in the main body of the article. We note that the map we define here is essentially dual to the one constructed in \cite[Proposition~8.2.1]{JLZ}. However, for our purposes, we have to  study variation in tame levels, whereas such variation has not been considered in loc. cit. }

\begin{remark}
\label{rem_appendix_explain_C_again}
In this remark, we explain the choice of $\mathscr{C}$, which is related to the vector $\eta_\f^{\rm LZ}$ of \cite[Corollary~6.4.3]{LZ1}. We first review the choice of this vector. Over an affinoid disc $V=B(r)\subset U$, one has the Galois representation $M_V(\f)^*$, which is simply given as the base change: $M_V(\f)^*=M_U(\f)^*\otimes_{\LL_U[\frac{1}{p}]}\mathscr{A}(r)$. We note that $\mathscr{A}(r)$ given as in the introduction is the ring of rigid analytic functions on $V$ and corresponds to $\cO(V)$ in \cite{LZ1}. As such, it admits a natural map $\LL_U[\frac{1}{p}]\to \mathscr{A}(r)$ and the tensor product above is given with respect to this map. When $r$ is sufficiently small, $\left(\sF^+\DD(M_V(\f)^*)\right)^{\Gamma=1}$ is a free $\mathscr{A}(r)$ module of  rank one and $\eta_{\f}^{\rm LZ}\in \left(\sF^+\DD(M_V(\f)^*)\right)^{\Gamma=1}$ is an $\sA(r)$-basis vector. On restricting to the wide-open ball $B^\circ(r)$ about $k$ of radius $r$ $($which amounts to a base change $\mathscr{A}(r)\to \LL_{(k,r)}[1/p]$$)$ and $U$ with the smaller open disc $B^\circ(r)$, we see that there exists an integer $\mathscr{C}$ such that 
$$p^{\mathscr{C}}\eta_\f^{\rm LZ}{}_{\vert_{U}}\in \cD^\circ \left(\sF^+\DD(M_U^\circ(\f)^*)\right)^{\Gamma=1}\,.$$
Elsewhere in our article, we ignore this process to decide on a good choice of an open ball $U$, and simply work with this open ball $U$ and set $r_0=r$. As a result, we write $\eta_\f^{\rm LZ}$ in place $\eta_\f^{\rm LZ}{}_{\vert_{U}}$. This also applies with the choice of $\TT_\f$ as the $\LL_U$-lattice $p^{-\mathscr{C}}M_U^{\circ}(\f)^*(1-\kappa_U/2)$, where $U$ is again the smaller open disc $B^\circ(r)$, so that we have
$\eta_\f^{\rm LZ}\in (\sF^+\DD(\TT_\f({\kappa_U}/{2}-1)))^{\Gamma=1}$.
\end{remark}
Recall the free  $\sA^\circ(r_0)$-module $\cD^\circ(\sA^\circ(r_0)(1-{\kappa_U}/{2}))$ of rank one introduced in Remark/Definition~\ref{remdefn_big_omega_eta}. Recall also that $\LL_{U}=\LL_{(k,r_0)}=\cO[[\frac{\kappa_U-k}{e_0}]]$ with $p^{-\ord_p(e_0)}=r_0$, where the radius $r_0$ chosen through the discussion in Remark~\ref{rem_appendix_explain_C_again}, and there exists a natural map $\sA^\circ(r_0)\to \LL_U$. 
{\begin{defn} 
\label{defn_appendix_big_D_twisted}
We { define the Dieudonn\'e module attached to $\sF^+\DD(\TT_\f)$ as} 
\[
\mathbf{D}_U =(\sF^+\DD(\TT_\f({\kappa_U}/{2}-1)))^{\Gamma=1}\otimes_{\LL_{U}} \left(\cD^\circ(\sA^\circ(r_0)(1-{\kappa_U}/{2}))\otimes_{\sA^\circ(r_0)} \Lambda_{U}\right),
\]
which is a free $\Lambda_{U}$-module of rank 1 and interpolates $\Dcris(T_{\f(\kappa)})^{\vp=p^{-\frac{\kappa}{2}}a_p(\f(\kappa))}$ as $\kappa \in B^\circ(r_0)_{\rm cl}$ varies.
\end{defn}}

\subsection{Perrin-Riou's big exponential map over relative Lubin--Tate extensions}\label{sec:PRLT}

It is proved in  \cite{zhangLT} (and revisited in \cite[\S9]{kobayashiGHC}) that the construction of the Perrin-Riou map attached to the representation $T_{\f(\kappa)}$ given in \cite{perrinriou94} for a classical weight $\kappa\in B^\circ(r_0)_{\rm cl}$ can be generalized to relative Lubin--Tate extensions. We outline how to carry this out for $\TT_\f$.

 Let $F$ be a finite unramified extension of $\Qp$ with ring of integers $W$ and Frobenius $\sigma$. Let $\cG$ be a relative Lubin--Tate group over $H$ of height 1. The corresponding uniformizer of $W$ is denoted by $\pi$.

Let $T_\pi$ be the Tate module associated to $\cG$. Fix $\eG=(\varepsilon_n)_{n\ge0}\in T_\pi$ and put $\pi_n=\varepsilon_n^{\sigma^{-n}}$. Let  $F_n=F(\pi_n)$, $F_\infty=\cup_{n\ge1}F_n$, $G_\infty=\Gal(F_\infty/\Qp)$ and $\Gamma_\infty=\Gal(F_\infty/F)\cong\Zp^\times$. Let $T$ be a free $\Zp$-module equipped with a continuous crystalline $G_H$-action. Let $V=T\otimes_{\Zp}\Qp$. For an integer $n$, we write
\[
T\langle n\rangle=T\otimes_{\Zp}T_\pi^{\otimes n},
\]
and define $V\langle n\rangle$  similarly.

Let $\Lambda_W(\Gamma_\infty)$ be the Iwasawa algebra of $\Gamma_\infty$ over $W$. Via the Amice transform, we may   identify it with $W[[X]]^{\psi=0}$. Define $\cH_{h}(\Gamma_\infty)$ and $\cH_h^+(\Gamma_\infty)$ similarly to $\cH_h(\Gamma_\cyc)$ and $\cH_h^+(\Gamma_\cyc)$, respectively. Their tensor products with $W$ will be denoted by  $\cH_h(\Gamma_\cyc)_W$ and $\cH_h^+(\Gamma_\cyc)_W$, respectively. Define 
\begin{align*}
\widetilde{\Delta}:\Dcris(T)\otimes_{\Zp} W[[X]]^{\psi=0}&\lra \bigoplus_{r\in \ZZ}\Dcris(T\langle r\rangle)/(1-\vp) \\
g&\mapsto \left((1\otimes D^r)g|_{X=0}\right)_{r\in \ZZ}.
\end{align*}
It follows from the work of Zhang~\cite{zhangLT} that for $h,j\gg0$ (namely, $h\ge1$ such that $\Fil^{-h}\Dcris(T)=\Dcris(T)$; in other words, all the Hodge--Tate weights of $T$ are $\le h$, and $j\ge 1-h$), there is a family of $\Lambda_W(\Gamma_\infty)$-morphisms
\begin{equation}\label{eqn_PR_big_Exp_LT}
\Omega_{T\langle j\rangle,h}^{\eG}:\left(\Dcris(T\langle j\rangle)\otimes_{\Zp} W[[X]]^{\psi=0}\right)^{\widetilde{\Delta}=0}\lra H^1(F,T\langle j\rangle\otimes\Lambda(\Gamma_\infty)^\iota)\otimes_{\Lambda(\Gamma_\infty)}\cH_h(\Gamma_\infty)_W
\end{equation}
such that the following diagram commutes
\[
\xymatrix{
\left(\Dcris(T\langle j\rangle)\otimes_{\Zp}W[[X]]^{\psi=0}\right)^{\widetilde{\Delta}=0}\ar[d]^{\Xi_{h,n}^{(j)}}\ar[rr]^{\ \ \ \ \ \ \ \Omega_{T\langle j\rangle,h}^{\eG}\ \ \ \  \ }&&  H^1(F,T\langle j\rangle\otimes\Lambda(\Gamma_\infty)^\iota)\otimes_{\Lambda(\Gamma_\infty)}\cH_h(\Gamma_\infty)_W\ar[d]\\
\Dcris(T\langle j\rangle)\otimes_{\Zp} F_n\ar[rr]^{\exp_{F_n,V\langle j\rangle}}&&H^1(F_n,V\langle j\rangle)
}
\]
where $\Xi_{h,n}^{(j)}$ sends $g$ to 
\begin{equation}\label{eq:Xi}
(h+j-1)!\left.\left(\left(p^{-n}\vp^{-n}\otimes 1\right)G^{\sigma^{-n}}\right)\right|_{X=\pi_n}
\end{equation}
with $(1-\vp\otimes \Phi)G=g$  and the second vertical map is the natural projection. 

{
Given a $(\vp,\Gamma_\infty)$-module $D$ over the Robba ring $\cR_L$, let $\HIw(F_\infty,D)$ denote $\varprojlim H^1(F,D\widehat{\otimes}_L\tilde\Lambda_n^\iota)$  where $\tilde\Lambda_n^\iota$ is defined as in \cite[\S3.1]{nakamura2014Jussieu} with $\Gamma_K$ in loc. cit. replaced by $\Gamma_\infty$. Note that  $\HIw(F_\infty,D)$ is a module over $\cH(\Gamma_\infty):=\cup_{r>0}\cH_r(\Gamma_\infty)$
and it follows from \cite[Theorem~3.3]{nakamura2014Jussieu} that there is an isomorphism 
\[
\HIw(F_\infty,D)\cong D^{\psi=1}.
\]
  Let us write $\Dcris(D)$ for the Dieudonn\'e module as defined in Definition~2.5 in op. cit. We assume that the eigenvalues of $\vp$ on $\Dcris(D)$ are not powers of  $p$. When $D$ is crystalline with Hodge--Tate weights $\ge0$, Nakamura gave a generalization of  the Perrin-Riou map attached to $D$ over the $p$-cyclotomic extension  of a finite unramified extension of $\Qp$ (see Definition~3.19 of op. cit.). We may define a similar map over $F_\infty$:
\[
\Omega_{D,h}^{\eG}	=\nabla_{h-1}\circ\cdots\nabla_1\circ\nabla_0\circ (1-\vp\otimes\Phi)^{-1}:\left(\Dcris(D)\otimes_{\Zp} W[[X]]^{\psi=0}\right)^{\widetilde{\Delta}=0}\lra \HIw(F_\infty,D)\otimes_{\Zp}W,
\]where $h$ is an integer such that all the Hodge--Tate weights of $D$ are $\le h$ and the operators $\nabla_i$ are defined as in \cite[\S3.2]{nakamura2014Jussieu}.
Indeed, we may solve the equation  $(1-\vp\otimes \Phi)G=g$ via Lemma~3.18 of op. cit., which says that  under our assumption on the $\vp$-eigenvalues in $\Dcris(D)$, there is an isomorphism
\begin{equation}\label{eq:1-vp}
1-\vp:\left(\Brig\otimes \Dcris(D)\right)^{\psi=1}\stackrel{\sim}{\longrightarrow}\left(\Brig\otimes \Dcris(D)\right)^{\psi=0},
\end{equation}
where $\Brig$ is the set of power series in $F[[X]]$ that converge on the open unit disc and we may identify $W[[X]]^{\psi=0}$ as a subset of $(\Brig)^{\psi=0}\otimes_{\Zp}W$. 
Parallel to \cite[Proposition~8.2.1]{JLZ} (where the operator $1-\vp$ is used), we may define a $\Lambda_U[1/p]$-adic version of $\Omega_{D,h}^{\eG}	$.
}

\begin{theorem}
Let $\f$ be a Coleman family as in the introduction.   { Fix an integer $h\ge0$ that is greater than or equal to the slope of $\f$. }There exists a unique ${\Lambda_U}[1/p]\widehat{\otimes}_{\Zp}\Lambda_{W}(\Gamma_\infty)$-morphism
\[
\EXP_\f^{\eG}:{\mathbf{D}_U[1/p]\widehat{\otimes}_{\Zp}\Lambda_{W}(\Gamma_\infty)}\lra H^1(F,\TT_\f\widehat\otimes \Lambda(\Gamma_\infty)^\iota){\otimes}_{\Lambda(\Gamma_\infty)} { \cH_{h}(\Gamma_\infty)_W}
\]
interpolating the Perrin-Riou big exponential maps. More explicitly, if $\kappa\in B^\circ(r_0)_{\rm cl}$ is a classical weight, then we have
$$\EXP_\f^{\eG}(\kappa)=\Omega_{T_{\f(\kappa)},\frac{\kappa}{2}}^{\eG}\big|_{\left(\Dcris(T_{\f(\kappa)})^{\vp=p^{-\frac{\kappa}{2}}a_p(\f(\kappa))}\right)\otimes \Lambda_{W}(\Gamma_\infty)}$$  
under the weight-$\kappa$ specialization map on $\mathbf{D}_U[1/p]$.
\end{theorem}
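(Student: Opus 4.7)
The plan is to produce $\EXP_\f^{\eG}$ by carrying out Nakamura's construction of the big exponential map on the $(\vp,\Gamma_\infty)$-module attached to $\TT_\f$ in families, and then to verify the interpolation formula by specializing at classical points. By Liu's results \cite{liu-CMH}, after possibly shrinking $r_0$, we dispose of the $(\vp,\Gamma)$-module $\DD(\TT_\f)$ over the relative Robba ring $\cR_{\LL_U[1/p]}$, together with the saturated triangulation \eqref{eq:filtration}. Restricting the $\Gamma_\cyc$-action to $\Gamma_\infty\subset\Gamma_\cyc$ yields the analogous data attached to the relative Lubin--Tate tower $F_\infty/F$, and the $\Lambda_U$-adic Dieudonn\'e module $\mathbf{D}_U$ of Definition~\ref{defn_appendix_big_D_twisted} plays the role of the family analogue of $\Dcris(T_{\f(\kappa)})^{\vp=p^{-\kappa/2}a_p(\f(\kappa))}$; by construction, its specialization at any $\kappa\in B^\circ(r_0)_{\rm cl}$ recovers the latter.

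The key technical step is to solve the equation $(1-\vp\otimes\Phi)G=g$ in families, that is, to establish a $\Lambda_U[1/p]$-adic analogue of the isomorphism \eqref{eq:1-vp}. Once this is available, I would define
\[
\EXP_\f^{\eG}:=\nabla_{h-1}\circ\cdots\circ\nabla_1\circ\nabla_0\circ(1-\vp\otimes\Phi)^{-1},
\]
where the operators $\nabla_i$ of \cite[\S3.2]{nakamura2014Jussieu} are built from the $\Gamma_\infty$-action alone and hence extend verbatim to the family setting. Composing with the family version of Nakamura's identification $\HIw(F_\infty,\DD(\TT_\f))\cong \DD(\TT_\f)^{\psi=1}$ then produces the desired $\Lambda_U[1/p]\widehat{\otimes}_{\Zp}\Lambda_W(\Gamma_\infty)$-linear map landing in $H^1(F,\TT_\f\widehat{\otimes}\Lambda(\Gamma_\infty)^\iota)\otimes_{\Lambda(\Gamma_\infty)}\cH_h(\Gamma_\infty)_W$. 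For the interpolation property, I would specialize at a classical weight $\kappa\in B^\circ(r_0)_{\rm cl}$: each of $(1-\vp\otimes\Phi)^{-1}$, the $\nabla_i$, and the $\HIw$-identification is intrinsic to the $(\vp,\Gamma_\infty)$-module and hence commutes with specialization by Liu's base-change compatibility; comparing the resulting recipe with Zhang's \cite{zhangLT} definition of $\Omega_{T_{\f(\kappa)},\kappa/2}^{\eG}$ yields the claimed equality. Uniqueness then follows from Lemma~\ref{lemma_amice_velu_in_families} together with the density of $B^\circ(r_0)_{\rm cl}$ in $B^\circ(r_0)$ and torsion-freeness of the target after inverting $p$.

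The main obstacle is establishing the invertibility of $1-\vp\otimes\Phi$ at the level of families. At each classical point this is Berger--Nakamura (\cite[Lemma~3.18]{nakamura2014Jussieu}), provided that the eigenvalue of $\vp$ on $\mathbf{D}_U$ at that point is not a power of $p$, which follows from the Weil bound on $a_p(\f(\kappa))$. Promoting this pointwise statement to a $\Lambda_U[1/p]$-adic inverse, however, requires uniform control of the $\vp$-slopes across $B^\circ(r_0)$. Since the slope $v(\lambda)$ of $\f$ is fixed and $A_p(\f)\in\LL_U$ depends analytically on $\kappa_U$, a Banach-space/Neumann-series argument (after possibly further shrinking $r_0$) yields the required inverse operator, and the map $\EXP_\f^{\eG}$ then falls out of the construction outlined above.
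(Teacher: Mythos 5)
Your overall plan agrees with the paper's: build a Nakamura-style big exponential map $\nabla_{h-1}\circ\cdots\circ\nabla_0\circ(1-\vp\otimes\Phi)^{-1}$ for the family $(\vp,\Gamma_\infty)$-module and show by specialization that it interpolates Zhang's maps. But you are missing the structural reduction that actually makes the construction go through, and as a consequence several of your steps are either heavier than they need to be or are not justified.

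The paper first passes from $\DD(\TT_\f)$ to the rank-one unramified $(\vp,\Gamma)$-module $\DD^+_{\ur}:=\sF^+\DD(\TT_\f)[1/p](1-\kappa_U)=\cR_{\LL_U}\cdot e$, on which $\Gamma$ acts trivially and $\vp$ acts by multiplication by $a_p(\f)$. This reduction is essential for three reasons that your proposal either glosses over or omits. First, since $\vp$ acts by $a_p(\f)$, which is a nonconstant element of $\LL_U$, one has $\bigoplus_r \Dcris(\DD^+_\ur\langle r\rangle)/(1-\vp)=0$, so the $\widetilde\Delta=0$ restriction in the domain of the big exponential is automatically satisfied in families; your proposal never addresses the $\widetilde\Delta=0$ condition at all. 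Second, on this rank-one unramified module the operator $1-\vp$ is concrete, and the paper observes that the proof of \cite[Lemma 3.18]{nakamura2014Jussieu} generalizes verbatim to give the $\cR_{\Lambda_U}$-adic isomorphism \eqref{eq:1-vp}; your Banach-space/Neumann-series substitute is not worked out and, by itself, is not obviously sufficient (a Neumann series inverts $1-\vp$ when $\vp$ is a topologically nilpotent contraction, but on $\Brig\otimes\Dcris$ the relevant statement is that $1-\vp$ is bijective between the $\psi=1$ and $\psi=0$ eigenspaces, and this is not a straightforward contraction estimate — which is exactly why Nakamura's argument is the thing to generalize rather than replace). Third, working with the rank-one piece means the Dieudonn\'e module is literally $\Lambda_U[1/p]\cdot e$ and the twist back to $\mathbf{D}_U[1/p]=\Dcris(\DD^+_\ur)\otimes\Lambda_U(\kappa_U/2)$ is a genuine identification; your proposal treats $\mathbf{D}_U$ merely as ``the family analogue'' without establishing this.

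A second genuine gap: to land in $H^1(F,\TT_\f\widehat\otimes\Lambda(\Gamma_\infty)^\iota)\otimes_{\Lambda(\Gamma_\infty)}\cH_h(\Gamma_\infty)_W$ one must pass from $\HIw(F_\infty,\DD(\TT_\f))$ to $H^1(F,\TT_\f\widehat\otimes\cH(\Gamma_\infty)^\iota)$ and then descend from $\cH(\Gamma_\infty)$ to $\cH_h(\Gamma_\infty)$. The first step uses \cite[Theorem 5.13]{SV} together with the argument of \cite[Proposition II.3.1]{colmez98}, which in turn needs the vanishing $H^2(F,\TT_\f\widehat\otimes\LL(\Gamma_\infty)^\iota)=0$, and this is precisely where the hypothesis \ref{item_irr} enters the paper — it is, in fact, the only place it is used. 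Your proposal does not mention this vanishing or the hypothesis, so the identification $\HIw\cong H^1$ is unjustified as written. The final descent to $\cH_h$ is carried out in the paper by checking each classical specialization lands in $\cH_h(\Gamma_\infty)_W$ and concluding for the family; you would need to say this explicitly rather than assert it as a consequence of interpolation.
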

\begin{proof}
{
Recall from \cite[Theorem~6.3.2]{LZ1}
(combined with our discussion in Remark~\ref{rem_appendix_explain_C_again}) that  $\DD^+_{\ur}:=\sF^+\DD(\TT_{\f})[1/p](1-\kappa_U) $ is an unramified  rank-one $(\vp,\Gamma)$-module  over the relative Robba ring $\cR_{\LL_U}:=\cR_{\sA(r_0)}\otimes_{\sA(r_0)}\LL_U$. In more explicit terms, $\DD^+_\ur=\cR_{\LL_U}\cdot e$, where  $\Gamma$ acts on $e$ trivially, whereas the action of  $\vp$ on $e$ is given by the multiplication by $a_p(\f)$, where $a_p(\f)$ is the $U_p$-eigenvalue of $\f$ (since  the central character is trivial in our current setting).  Let $\Dcris(\DD^+_\ur)=\left(\DD^+_\ur\right)^{\Gamma=1}=\Lambda_U[1/p]\cdot e$ denote the Dieudonn\'e module of $\DD^+_\ur$. It interpolates $\Dcris(T_{\f(\kappa)})[1/p]^{\vp=a_p(\f(\kappa))}$ as $\kappa \in B^\circ(r_0)_{\rm cl}$ varies. Furthermore, we have \begin{equation}\label{eq:twist-Dieudonne}
\mathbf{D}_U[1/p] =\Dcris(\DD^+_{\ur})\otimes_{\Lambda_U} \Lambda_U(\kappa_U/2).
\end{equation}

The proof of \cite[Lemma~3.18]{nakamura2014Jussieu} can be generalized to give the $\cR_{\Lambda_U}$-adic version of \eqref{eq:1-vp}:
\[
1-\vp:\left(\Brig\otimes \Dcris(\DD_\ur^+)\right)^{\psi=1}\stackrel{\sim}{\longrightarrow}\left(\Brig\otimes \Dcris(\DD_\ur^+)\right)^{\psi=0}.
\]
This allows us to define
\[
\Omega_{\DD_\ur^+,h}^{\eG}	=\nabla_{h-1}\circ\cdots\nabla_1\circ\nabla_0\circ (1-\vp\otimes\Phi)^{-1}:\left(\Dcris(\DD_\ur^+)\otimes_{\Zp} W[[X]]^{\psi=0}\right)^{\widetilde{\Delta}=0}\lra \HIw(F_\infty,\DD_\ur^+)\otimes_{\Zp}W,
\]
after replacing $\Gamma$ by $\Gamma_\infty$ (which we can do since the action of $\Gamma$ on $e$ is trivial).
Since $\vp$ acts on $\bD_\ur^+$ via $a_p(\f)$, which is not a constant, we have
\[
\bigoplus_{r\in\ZZ}\Dcris(\DD_\ur^+\langle r\rangle)/(1-\vp)=0.
\]
In particular, this tells us that
\[
\left(\Dcris(\DD_\ur^+)\widehat{\otimes}_{\Zp} W[[X]]^{\psi=0}\right)^{\widetilde{\Delta}=0}=\Dcris(\DD_\ur^+)\widehat{\otimes}_{\Zp} W[[X]]^{\psi=0},
\]
giving us the map
\[
\Omega_{\DD_\ur^+,h}^{\eG}:\Dcris(\DD_\ur^+)\otimes_{\Zp} W[[X]]^{\psi=0}\lra \HIw(F_\infty,\DD_\ur^+)\otimes_{\Zp}W,
\]

  Similar to \cite[Theorem~8.2.8]{KLZ2}, we may  define, via the identification \eqref{eq:twist-Dieudonne},
\[
\Omega_{\DD_\ur^+\otimes_{ \LL_U}\Lambda_U(\kappa_U/2),h}^{\varepsilon_\cG}:\bD_U[1/p]\widehat{\otimes}_ {\Zp} W[[X]]^{\psi=0}\lra \HIw(F_\infty,\DD_\ur^+\otimes_{\LL_U}\Lambda_U(\kappa_U/2))\otimes_{\Zp}W,
\]
after tensoring the construction above with $\DD_\ur^+$ by $\Lambda_U(\kappa_U/2)$ . 

Consider the composition $\Xi$ given by
\begin{align*}
\HIw(F_\infty,\DD_\ur^+\otimes_{\LL_U}\Lambda_U(\kappa_U/2))\otimes_{\Zp}W\hookrightarrow &\, \HIw(F_\infty,\DD(\TT_\f))\otimes_{\Zp}W\\
\cong&\, H^1(F,\TT_\f\,\widehat\otimes\cH(\Gamma_\infty)^\iota)\otimes_{\Zp}W\\
\cong&\, H^1(F,\TT_\f\,\widehat\otimes\,\Lambda(\Gamma_\infty)^\iota)\otimes_{\Lambda(\Gamma_\infty)}\cH(\Gamma_\infty)_W,
\end{align*}
where the first inclusion is given by the inclusion of $(\vp,\Gamma_\infty)$-modules, the second isomorphism is given by \cite[Theorem~ 5.13]{SV} (after twisting by the Lubin--Tate character $\chi_{LT}$ attached to $\cG$ multiplied by the inverse of the cyclotomic character, which is an unramified character). { The final isomorphism can be proved arguing as in the proof of \cite[Proposition~II.3.1]{colmez98} relying crucially on the vanishing $H^2(F,\TT_\f\,\widehat{\otimes}\,\LL(\Gamma_\infty)^\iota)=0$, a fact that follows from our running hypothesis\footnote{ This is in fact the only point in the present work where we need the hypothesis \ref{item_irr}.} \ref{item_irr}}.
We can now define $\EXP_\f^{\varepsilon_\cG}$ by composing $\Omega_{\DD_\ur^+\otimes_{ \LL_U}\Lambda_U(\kappa_U/2),h}^{\varepsilon_\cG}$ with $\Xi$.

Note that for each  $\kappa\in B^\circ(r_0)_\mathrm{cl}$, the specialization $\EXP_\f^{\varepsilon_\cG}(\kappa)=\EXP_{\f(\kappa)}^{\varepsilon_\cG}$ takes values inside the module $H^1(F,T_{\f(\kappa)}\otimes\Lambda(\Gamma_\infty)^\iota)\otimes_{\Lambda(\Gamma_\infty)}\cH_h(\Gamma_\infty)_W$. This tells us that the image of $\EXP_\f^{\varepsilon_\cG}$ is contained in $H^1(F,\TT_\f\,\widehat\otimes\,\Lambda(\Gamma_\infty)^\iota)\otimes_{\Lambda(\Gamma_\infty)} { \cH_{h}(\Gamma_\infty)_W}$, as required. 
}
\end{proof}

{
Consider the following integrality condition on the Perrin-Riou map $\EXP_\f^{\eG}$: 
\begin{itemize}
\item[\mylabel{item_Int_PR}{{\bf (IntPR)}}] 
There exists an integer $c(h)$, which depends only on $h$, such that the image of {$\mathbf{D}_U\widehat{\otimes}_{\Zp}\Lambda_{W}(\Gamma_\infty)$ under $\EXP_\f^{\eG}$ is contained} in the module $ \HIw(F_\infty,\TT_\f)\widehat{\otimes}_{\Lambda_W(\Gamma_\infty)} p^{c(h)}\cH^+_{h}(\Gamma_\infty)_W$.
\end{itemize}

This condition holds true if $\f$ has slope-zero. In the general set up, it is work in progress of Ochiai.

}

\subsection{Semi-local Perrin-Riou maps}
\label{subsec_semilocalPR}

Let $L=K_{{c_0}p^\infty}$ be the ring class field of conductor $c_0p^\infty$ for some integer $c_0$ coprime to $p$. We write $H_{cp^\infty}$ for the corresponding ring class group. Let $\fP$ be a place of $L$ dividing $\p$ and $\widetilde\fP$ the place of $K[c]$ lying below $\fP$. Then, $K_{c_0\tilde\fP}/K_\p$ can be realized as a finite unramified extension of $\Qp$ and $L_\fP/K_{c_0,\tilde\fP}$ is an extension given by the  torsions of a relative Lubin--Tate extension of height one $\cG_{\fP}$. 

Let $\fP_1,\cdots, \fP_r$ be the primes of $L$ above $\p$. Without loss of generality, we suppose that $\fP_1$ is the prime corresponding to our fixed embeddings. Let $D_{\fP_i}$ be the decomposition group of ${\fP_i}$ and $\fP_i=\sigma_i(\fP_1)$, where $\sigma_i\in H_{c_0p^\infty}$. Then, $H_{c_0p^\infty}=\bigsqcup \sigma_iD_{\fP_1} $.  Let us write $W_i$ for the ring of integers of $K_{c_0,\tilde\fP_i}$, $\Delta_i=\Gal(K_{c_0,\tilde\fP_i}/K_\p)$, $\Gamma_i=\Gal(L_{\fP_i}/K_{c_0,\tilde\fP_i})\cong \widetilde\Gamma_\ac$ and $\cG_i=\cG_{\fP_i}$. We identify $D_{\fP_i}$ with the local Galois group $\Gal(L_{\fP_i}/K_\p)$. Given a modular form $g$ as in the main body of the article, we define the 
$\Lambda(H_{c_0p^\infty})$-morphism
\begin{align*}
\Omega_{T_g\langle j\rangle,h,c_0}^{(\cG_i)_i}:\Dcris(T_g\langle j\rangle)\otimes\Lambda(H_{c_0p^\infty})&\cong\bigoplus_{i=1}^r\Dcris(T_g\langle j\rangle)\otimes\sigma_i\Lambda(D_{\fP_i})\\
&\cong\bigoplus_{i=1}^r\Dcris(T_g\langle j\rangle)\otimes_{\Zp}\sigma_i\Lambda_{W_i}(\Gamma_i)\\
&\rightarrow\bigoplus_{i=1}^r \HIw(F_{\fP_i},T_g\langle j\rangle)\,{\otimes}\,p^{c(h)}\sigma_i\cH^+_h(\Gamma_i)_{W_i}\\
&\cong \bigoplus_{i=1}^rH^1(K_{c_0,\tilde\fP_i},T_g^\ac\langle j\rangle)\,\,{\otimes}\,p^{c(h)}\,\sigma_i\cH^+_h(\Gamma_i)_{W_i}\\
&\cong H^1(K_{c_0,\p},T_g^\ac\langle j\rangle)\,\otimes\, p^{c(h)}\cH^+_h(\tilde\Gamma_\ac)_{W_1}
\end{align*}
given by $\bigoplus_{i=1}^r\Omega_{T\langle j\rangle,h}^{\varepsilon_{\cG_i}}$. Here, we have identified $W_i$ with $W_1$ and
\[H^1(K_{c_0,\p},T_g^\ac\langle j\rangle):=\bigoplus_{i=1}^rH^1(K_{c_0,\tilde{\fP}_i},T_g^\ac\langle j\rangle).
\]
Similarly, we have the 
${\Lambda_U}\,\widehat\otimes_{\Zp}\Lambda(H_{c_0p^\infty})$-morphism
\[
{\EXP}_{\f,c_0}^{(\cG_i)_i}:\mathbf{D}_U\,\widehat\otimes_{\Zp}\Lambda(H_{c_0p^\infty})\cong\bigoplus_{i=1}^r\mathbf{D}_U\,\widehat\otimes_{\Zp}\sigma_i\Lambda(D_{\fP_i})\longrightarrow H^1(K_{c_0,\p},\TT_\f^\ac)\,\otimes\,\cH(\tilde\Gamma_\ac)_{W_1}
\]
given by $\bigoplus_{i=1}^r{\EXP}_{\f}^{\varepsilon_{\cG_i}}$.

\section{Generalized Heegner Cycles over wide open discs}\label{appB}
Our goal in this appendix is to give an unconditional proof of a weaker form of Theorem~\ref{thm_main_GHCinterpolate} (Theorem~\ref{thm_main_GHCinterpolate_appendix_B} below), which concerns the variation of Generalized Heegner Cycles over wide open discs in the anticyclotomic weight space $\mathcal{W}_\ac:={\rm Hom}_{\rm cts}(\widetilde{\Gamma}_\ac, \mathbb{G}_{\rm m})$. Until the end of the present section, we consider $\mathcal{W}_\ac$ as $(p-1)$ copies of the rigid analytic unit balls. 

In what follows, we will work with fixed Lubin--Tate groups $\cG_i$ and simply write ${\EXP}_{\f,c_0}$ in place of ${\EXP}_{\f,c_0}^{(\cG_i)_i}$ (in the notation of \S\ref{subsec_semilocalPR}). 

\subsection{Integrality over small wide open discs} 
\label{subsec_appendix_B_1}  Let $V\subset \mathcal{W}_\ac$ be any wide open disc of radius $r<1$ about the point of $\mathcal{W}_\ac(\QQ_p)$ that corresponds to the trivial character $\mathds{1}\in \mathcal{W}_\ac(\QQ_p)$. Let $\LL_V$ denote the ring of analytic functions on $V$ bounded by $1$. Note then that we have a natural inclusion of topological rings
$$\iota_V: \cH(\widetilde\Gamma_\ac) \twoheadrightarrow \cH(\Gamma_\ac)\hookrightarrow \LL_V[1/p]$$
 induced by restricting functions\footnote{Since $V$ has radius $<1$, by considering the coefficients of an element in $\cH(\Gamma_\ac)$ as a power series, it is clear that each element of $\cH(\Gamma_\ac)$ restricts to a bounded function on $V$.} on $\mathcal{W}_\ac$ to $V$. We write $\iota_V$ also to denote the map $\cH(\Gamma_\ac)\hookrightarrow \LL_V[1/p]$. Note in particular that $\iota_V\left(\Lambda(\widetilde \Gamma_\ac)\right)\subset \LL_V$.
 
For any $c_0$ as in \S\ref{subsec_semilocalPR}, let us put $\LL_V^{(c_0)}:=\Lambda(H_{c_0p^\infty})\otimes_{\Lambda(H_{p^\infty})}\LL_V$ and set $\LL_{V,W_1}:=\LL_{V}\otimes_{\ZZ_p}W_1$. We recall that $H_{c_0p^\infty}$ denotes the Galois group of the ring class extension of $K$ modulo $c_0p^\infty$ and in the special case when $c_0=1$, we have $H_{p^\infty}=\widetilde \Gamma_\ac$.  Note that we have 
 \begin{align*}
     \left(\mathbf{D}_U\widehat{\otimes}_{\Zp}\LL(H_{c_0p^\infty})\right)\otimes_{\Lambda(H_{p^\infty})}\LL_V&=  \mathbf{D}_U\widehat{\otimes}_{\Zp}\LL_V^{(c_0)}, \\
     \left(H^1(K_{c_0,\p},\TT_\f^\ac)\,\otimes\,\cH(\tilde\Gamma_\ac)_{W_1}\right)\otimes_{\Lambda(H_{p^\infty})}\LL_V&= H^1(K_{c_0,\p},\TT_\f^\ac)\,\otimes_{\Lambda(H_{p^\infty})}\, (\LL_{V,W_1})[1/p].
 \end{align*}
Therefore,  tensoring the morphism $\EXP_{\f,c_0}$ with $\Lambda_V$ gives a map of $\LL_U\widehat\otimes\LL_V^{(c_0)}$-modules
\begin{align*}
    \EXP_{\f,c_0}^{(V)}:  \mathbf{D}_U\widehat{\otimes}_{\Zp}\LL_V^{(c_0)} \rightarrow H^1(K_{c_0,\p},\TT_\f^\ac)\,\otimes_{\LL(H_{p^\infty})}\, (\LL_{V,W_1})[1/p] .
\end{align*}
 
 \begin{comment}
 We may then define a morphism of $\LL_U\widehat\otimes\LL_V^{(c_0)}$-modules
\begin{align*}
    \EXP_{\f,c_0}^{(V)}:  \mathbf{D}_U\widehat{\otimes}_{\Zp}\LL_V^{(c_0)} \xrightarrow{d\otimes (\lambda_0\otimes \lambda_V) \mapsto \lambda_V\cdot ({\rm id}\otimes \iota_V)\circ\EXP_{\f,c_0}(d\otimes\lambda_0)}  H^1(K_{c_0,\p},\TT_\f^\ac[1/p])\,\otimes_{\cH_0(\widetilde\Gamma_\ac)}\, (\LL_{V,W_1})[1/p] 
\end{align*}
where in the description of the first arrow, $d\in \mathbf{D}(U)$, $\lambda_0\in \Lambda(H_{c_0p^\infty})$ and $\lambda_V\in \LL_V$.
\end{comment}

\begin{lemma}
\label{lemma_appendix_B_almost_integrality}
In the setting and notation of \S\ref{subsec_appendix_B_1}, there exists $m=m(c_0,V)\in \ZZ$ so that 
$$p^m\cdot \EXP_{\f,c_0}^{(V)}\left(\mathbf{D}_U\,\widehat{\otimes}_{\Zp}\,\LL_V^{(c_0)}\right)\subset H^1(K_{c_0,\p},\TT_\f^\ac)\,\otimes_{\LL(\widetilde\Gamma_\ac)}\,  (\LL_{V,W_1})\,.$$

%\green{Just to be sure: $\LL(\widetilde\Gamma_\ac)$ is the same as $\LL(H_{p^\infty})$, right?}
\end{lemma}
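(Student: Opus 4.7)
The plan is to reduce the assertion to an elementary estimate on the restriction map $\iota_V\colon \cH_h(\widetilde\Gamma_\ac)\to\LL_V[1/p]$ alone, by exploiting the cyclicity of the domain of $\EXP_{\f,c_0}$. Concretely, since $\mathbf{D}_U$ is free of rank one over $\LL_U$ with basis $e$, and $\EXP_{\f,c_0}$ is $\LL_U\widehat\otimes_{\Zp}\LL(H_{c_0p^\infty})$-linear, the module $\mathbf{D}_U\,\widehat\otimes_{\Zp}\,\LL_V^{(c_0)}$ is generated by $e\otimes 1$ over $\LL_U\widehat\otimes\LL_V^{(c_0)}$. Consequently the image of $\EXP_{\f,c_0}^{(V)}$ is precisely the $\LL_U\widehat\otimes\LL_V^{(c_0)}$-submodule generated by the single element $\iota_V(\EXP_{\f,c_0}(e))$, and it suffices to exhibit an integer $m=m(c_0,V)$ for which $p^m\,\iota_V(\EXP_{\f,c_0}(e))$ lies in $H^1(K_{c_0,\p},\TT_\f^\ac)\otimes_{\LL(\widetilde\Gamma_\ac)}\LL_{V,W_1}$.

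Next, I would recall that by the construction in Section~\ref{sec:biglogalaonganticyclotower}, the image of $\EXP_{\f,c_0}$ is contained in $H^1(K_{c_0,\p},\TT_\f^\ac)\otimes_{\LL(\widetilde\Gamma_\ac)}\cH_h(\widetilde\Gamma_\ac)_{W_1}$ for a fixed order $h$ bounded in terms of the slope of $\f$. In particular the fixed element $\EXP_{\f,c_0}(e)$ is itself a distribution of order $h$, so by the equality $\cH_h=\cH_h^+[1/p]$ there is an integer $N=N(\f,c_0)$ with $p^N\EXP_{\f,c_0}(e)\in H^1\otimes_{\LL(\widetilde\Gamma_\ac)}\cH_h^+(\widetilde\Gamma_\ac)_{W_1}$.

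The main technical input — and the step I expect to be the heart of the argument — is the following estimate on the restriction map: there exists an integer $C=C(h,V)$, depending only on $h$ and the radius of $V$, such that $\iota_V(\cH_h^+(\widetilde\Gamma_\ac))\subseteq p^{-C}\LL_V$. To prove this, I would first reduce to the case of $\cH_h^+(\Gamma_\ac)$ via the finite group $\Delta_\ac$, then expand a typical element as $F=\sum_n c_n(\gamma_\ac-1)^n$ with $\ord_p(c_n)+h\ell(n)\geq 0$, and exploit the fact that on a wide-open disc $V$ of radius $r=p^{-s}<1$ the element $\gamma_\ac-1$ has $p$-adic valuation at least $s$. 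A short calculation shows that $\sup_{n\geq 0}(h\ell(n)-ns)$ is finite and bounded by an explicit constant depending only on $h$ and $s$, yielding the desired uniform bound on $\iota_V(F)$.

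Combining these three ingredients — the cyclicity of the domain, the bounded $v_h$-valuation of the fixed element $\EXP_{\f,c_0}(e)$, and the elementary restriction estimate — produces the integer $m:=N+C(h,V)$ with the required property. The remaining steps are purely formal book-keeping to verify compatibility with the tensor decompositions defining $\LL_V^{(c_0)}$ and $\LL_{V,W_1}$.
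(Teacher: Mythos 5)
Your argument is correct, but it proceeds by a genuinely different route from the paper's. The paper's proof is essentially a one-liner: it observes that the three natural containments
$$(\LL_U\widehat\otimes\LL_{V,W_1})[1/p]\subset \LL_U\widehat\otimes_{\ZZ_p}(\LL_{V,W_1}[1/p]) \subset \LL_U[1/p]\widehat\otimes(\LL_{V,W_1})[1/p]$$
are in fact equalities, and deduces this by dualizing the Banach-space decomposition \eqref{eqn_decompose_Banach} that was already established in the proof of Lemma~\ref{lemma_amice_velu_in_families} (specialized to $v(\lambda)=0$ and after fixing a homeomorphism $V\xrightarrow{\sim}\Gamma$; the authors also point to a discussion in Loeffler--Zerbes). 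In other words, the paper packages the ``uniform $p$-power denominator'' assertion into an abstract identity about completed tensor products of distribution spaces, so that once the domain of $\EXP^{(V)}_{\f,c_0}$ is recognized as cyclic over $\LL_U\widehat\otimes\LL_V^{(c_0)}$ the lemma follows formally.

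You instead bypass the tensor-product identity entirely and compute directly with the valuation $v_h$: first reduce to the single generator $e\otimes 1$ exactly as the paper does implicitly; then observe that $\EXP_{\f,c_0}(e\otimes 1)$ lies in the (uncompleted) module $H^1\otimes_{\Lambda(\widetilde\Gamma_\ac)}\cH_h(\widetilde\Gamma_\ac)_{W_1}$, hence is a finite sum with a bounded $v_h$-denominator; and finally show, by the elementary estimate $\sup_n\bigl(h\ell(n)-ns\bigr)<\infty$ on a disc of radius $p^{-s}<1$, that $\iota_V$ carries $\cH_h^+$ into $p^{-C}\LL_V$. Your route has the merit of being self-contained and of making the dependence of $m$ on the slope $h$ and the radius of $V$ explicit, whereas the paper's route is shorter precisely because it recycles machinery (the Banach-space decomposition and its dual) already present elsewhere. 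Two small points to tidy if you were to write this out in full: $\iota_V(\EXP_{\f,c_0}(e))$ should read $\EXP_{\f,c_0}^{(V)}(e\otimes 1)$ (the image under base change along $\iota_V$ of $\EXP_{\f,c_0}(e\otimes 1)$, not of $e$); and it is worth stating explicitly that $\LL_V^{(c_0)}\to\LL_{V,W_1}$ is an integral ring map, which is what allows the $\LL_U\widehat\otimes\LL_V^{(c_0)}$-submodule generated by the $p^m$-scaled generator to stay inside $H^1\otimes_{\Lambda(\widetilde\Gamma_\ac)}\LL_{V,W_1}$.
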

\begin{proof}
This follows from the definition of the map $\EXP_{\f,c_0}^{(V)}$ together with the observation that the natural containments
$$(\LL_U\widehat\otimes_{\QQ_p}\LL_{V,W_1})[1/p]\subset \LL_U\widehat\otimes_{\ZZ_p}(\LL_{V,W_1}[1/p]) \subset \LL_U[1/p]\widehat\otimes_{\QQ_p}(\LL_{V,W_1})[1/p]$$
are all equalities. Indeed, it suffices to check that the left-most ring equals the right most-ring and this follows from dualizing \eqref{eqn_decompose_Banach}, which we apply after picking $v(\lambda)$ to be $0$ and fixing a homoemorphism $V\xrightarrow{\sim}\Gamma$ (see also the discussion in \cite[P.2092]{LZ0}).
\end{proof}

\subsection{Main result over wide open discs} 
Suppose $c_0$ is a positive integer prime to $pN$ and let 
$${\pmb\zeta}^\ac_{\f,c_0}\in H^1\left(K_{c_0,\p},\TT_\f^\ac[1/p]\right){\otimes}_{\mathcal{H}_{0}(\widetilde{\Gamma}_\ac)} \,\mathcal{H}_{v(\lambda)}(\widetilde{\Gamma}_\ac)_{\sW}$$
denote the semi-local class given as in Theorem~\ref{thm_main_GHCinterpolate}(i). Employing our argument in the proof of Proposition~\ref{prop_main_GHC_in_families_with_ac_descended_to_L} with this semi-local class, we infer that 
$${\pmb\zeta}^\ac_{\f,c_0}\in H^1\left(K_{c_0,\p},\TT_\f^\ac[1/p]\right){\otimes}_{\mathcal{H}_{0}(\widetilde{\Gamma}_\ac)} \,\mathcal{H}_{v(\lambda)}(\widetilde{\Gamma}_\ac)\,.$$

\begin{defn}
\label{defn_app_B_14_06_2021}
For $V$ and $\iota_V$ given as in \S\ref{subsec_appendix_B_1}, we denote by
$${\pmb\zeta}^{(V)}_{\f,c_0}\in H^1\left(K_{c_0,\p},\TT_\f^\ac[1/p]\widehat{\otimes}_{\mathcal{H}_{0}(\widetilde{\Gamma}_\ac)}\LL_V[1/p]\right)$$
the image of 
$$({\rm id}\otimes \iota_V)\left({\pmb\zeta}^\ac_{\f,c_0}\right)\in H^1\left(K_{c_0,\p},\TT_\f^\ac[1/p]\right)\widehat{\otimes}_{\mathcal{H}_{0}(\widetilde{\Gamma}_\ac)}\LL_V[1/p]$$
under the natural morphism
\begin{equation}
    \label{eqn_App_B_1_15_05_2021_10_01}
    H^1\left(K_{c_0,\p},\TT_\f^\ac[1/p]\right)\widehat{\otimes}_{\mathcal{H}_{0}(\widetilde{\Gamma}_\ac)}\LL_V[1/p]\lra H^1\left(K_{c_0,\p},\TT_\f^\ac[1/p]\widehat{\otimes}_{\mathcal{H}_{0}(\widetilde{\Gamma}_\ac)}\LL_V[1/p]\right)\,.
\end{equation}
\end{defn}

\begin{lemma}
\label{lemma_appendix_B_almost_integrality_semi_local_classes}
For $m=m(c_0,V)$ as in Lemma~\ref{lemma_appendix_B_almost_integrality}, we have
$$p^m \cdot {\pmb\zeta}^{(V)}_{\f,c_0}\in H^1\left(K_{c_0,\p},\TT_\f^\ac\widehat{\otimes}_{\LL(\widetilde{\Gamma}_\ac)}\LL_V\right)\,.$$
\end{lemma}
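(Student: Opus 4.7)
The plan is to realize ${\pmb\zeta}^{(V)}_{\f,c_0}$ as the image of a single element of $\mathbf{D}_U\,\widehat{\otimes}_{\Zp}\,\LL_V^{(c_0)}$ under the map $\EXP_{\f,c_0}^{(V)}$, and then appeal to Lemma~\ref{lemma_appendix_B_almost_integrality}.

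First, recall from the proof of Theorem~\ref{thm_main_GHCinterpolate}(i) that
\[
{\pmb\zeta}^\ac_{\f,c_0}\;=\;\EXP_{\f,c_0}\!\left(\eta_{\f}\otimes \mathcal{L}_{\f,c_0}^{\iota}\right),
\]
where $\mathcal{L}_{\f,c_0}\in \sA^{\circ}\,\widehat{\otimes}\,\sW[[H_{c_0p^\infty}]]$. Applying $\mathrm{id}\otimes \iota_V$ on the anticyclotomic variable (and using the natural map $\sA^{\circ}\to \LL_U$), we obtain an element
\[
\mathcal{L}_{\f,c_0}^{(V),\iota}\;:=\;(\mathrm{id}\otimes \iota_V)\!\left(\mathcal{L}_{\f,c_0}^{\iota}\right)\;\in\;\LL_U\,\widehat{\otimes}_{\Zp}\,\LL_V^{(c_0)}\otimes_{\Zp}\sW,
\]
and hence $\eta_{\f}\otimes \mathcal{L}_{\f,c_0}^{(V),\iota}\in \mathbf{D}_U\,\widehat{\otimes}_{\Zp}\,\LL_V^{(c_0)}\otimes_{\Zp}\sW$.

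Next, I would verify the base-change compatibility: because $\EXP_{\f,c_0}^{(V)}$ is defined as the tensor product of $\EXP_{\f,c_0}$ with $\LL_V$ along $\iota_V$, the square
\[
\xymatrix@C=2.5em{
\mathbf{D}_U\,\widehat{\otimes}_{\Zp}\LL(H_{c_0p^\infty})\ar[r]^(.54){\EXP_{\f,c_0}}\ar[d]_{\mathrm{id}\otimes \iota_V}& H^1(K_{c_0,\p},\TT_\f^\ac)\otimes \cH(\widetilde\Gamma_\ac)_{W_1}\ar[d]\\
\mathbf{D}_U\,\widehat{\otimes}_{\Zp}\LL_V^{(c_0)}\ar[r]_(.5){\EXP_{\f,c_0}^{(V)}}& H^1\!\left(K_{c_0,\p},\TT_\f^\ac\,\widehat{\otimes}_{\LL(\widetilde\Gamma_\ac)}\LL_V\right)[1/p]
}
\]
commutes, where the right vertical arrow is the composition of $\mathrm{id}\otimes \iota_V$ with the natural morphism \eqref{eqn_App_B_1_15_05_2021_10_01}. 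Chasing $\eta_\f\otimes \mathcal{L}_{\f,c_0}^{\iota}$ around this square identifies ${\pmb\zeta}^{(V)}_{\f,c_0}$ (up to the coefficient map on the right) with $\EXP_{\f,c_0}^{(V)}\!\left(\eta_{\f}\otimes \mathcal{L}_{\f,c_0}^{(V),\iota}\right)$.

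Finally, applying Lemma~\ref{lemma_appendix_B_almost_integrality} with the integer $m=m(c_0,V)$, we conclude that
\[
p^m\cdot \EXP_{\f,c_0}^{(V)}\!\left(\eta_{\f}\otimes \mathcal{L}_{\f,c_0}^{(V),\iota}\right)\;\in\;H^1\!\left(K_{c_0,\p},\TT_\f^\ac\right)\otimes_{\LL(\widetilde\Gamma_\ac)}\LL_{V,W_1},
\]
which (after passing through \eqref{eqn_App_B_1_15_05_2021_10_01}) yields the stated integrality of $p^m\cdot {\pmb\zeta}^{(V)}_{\f,c_0}$. The main obstacle is bookkeeping: making precise the base-change/commutativity statement for the big exponential map along the map $\iota_V$ (and the compatibility with \eqref{eqn_App_B_1_15_05_2021_10_01}), since the map $\EXP_{\f,c_0}^{(V)}$ was defined precisely as this base change, so once the relevant completed tensor products are identified as in the proof of Lemma~\ref{lemma_appendix_B_almost_integrality}, the commutativity is immediate.
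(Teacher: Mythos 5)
Your proof is correct and follows essentially the same approach as the paper: identify $({\rm id}\otimes\iota_V)({\pmb\zeta}^\ac_{\f,c_0})$ as the image of $\eta_\f\otimes\mathcal{L}_{\f,c_0}^{\iota}$ under $\EXP_{\f,c_0}^{(V)}$, invoke Lemma~\ref{lemma_appendix_B_almost_integrality}, and pass through the restriction of the natural map \eqref{eqn_App_B_1_15_05_2021_10_01} to the integral lattices. Your rendition is slightly more explicit about the base-change/commutativity bookkeeping (which the paper treats as immediate from the definition of $\EXP_{\f,c_0}^{(V)}$), but the argument is the same.
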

\begin{proof}
In the notation of the proof of Theorem~\ref{thm_main_GHCinterpolate} and \S\ref{subsec_appendix_B_1}, note that we have
$$({\rm id}\otimes \iota_V)=\EXP_{\f,c_0}^{(V)}\left(\eta_{\f}\otimes{{\mathcal{L}_{\f,c_0}^{\iota}}}\right)$$
as per the definition of the class ${\pmb\zeta}^\ac_{\f,c_0}$ in the proof of Theorem~\ref{thm_main_GHCinterpolate}. It follows from Lemma~\ref{lemma_appendix_B_almost_integrality} that we have
$$p^m \cdot({\rm id}\otimes \iota_V) \in H^1\left(K_{c_0,\p},\TT_\f^\ac\right)\widehat{\otimes}_{\LL(\widetilde{\Gamma}_\ac)}\LL_V$$
for $m$ as in the statement of our lemma. Observing that the morphism \eqref{eqn_App_B_1_15_05_2021_10_01} restricts to a map
\begin{equation}
    \label{eqn_App_B_1_15_05_2021_10_01_bis}
    H^1\left(K_{c_0,\p},\TT_\f^\ac\right)\widehat{\otimes}_{\LL(\widetilde{\Gamma}_\ac)}\LL_V\lra H^1\left(K_{c_0,\p},\TT_\f^\ac\widehat{\otimes}_{\LL(\widetilde{\Gamma}_\ac)}\LL_V\right)\,,
\end{equation}
which concludes the proof of the lemma.
\end{proof}

\begin{defn}
For any $\kappa\in B^\circ(r_0)_{\rm cl}$, we denote by
$${\frak{z}}_{\f(\kappa),c_0}^{(V)}:=({\rm id} \otimes \iota_V)\left({\frak{z}}_{\f(\kappa),c_0}^{\ac}\right)\in H^1(K_{c_0},\TT_{g^\lambda}^\ac)\,\otimes_{\LL(\widetilde\Gamma_\ac)}\, \LL_V[1/p]=H^1(K_{c_0},\TT_{g^\lambda}^\ac\,\otimes_{\LL(\widetilde\Gamma_\ac)}\, \LL_V[1/p])$$
the restriction classical generalized Heegner cycle ${\frak{z}}_{\f(\kappa),c_0}^{\ac}$ over ${\rm Spm}(\cH(\widetilde \Gamma_\ac))$, given as in Definition~\ref{defn_pstabilizedGHC_levelNp}(i), to the wide open disc $V$. We remark that the last equality in the displayed equation above follows from \cite[Proposition~II.3.1]{colmez98}.
\end{defn}
\begin{theorem}
\label{thm_main_GHCinterpolate_appendix_B} 
Suppose $c_0$ is a positive integer prime to $pN$. Then there exists a unique class
$${\frak{z}}^{(V)}_{\f,c_0}\in H^1\left(K_{c_0},\TT_\f^\ac{\otimes}_{\LL(\widetilde{\Gamma}_\ac)} \,\LL_V[1/p]\right)$$
such that ${\pmb\zeta}_{\f,c_0}^{(V)}=\res_\p({\frak{z}}_{\f,c_0}^{(V)})$, which is characterized by the interpolation property 
\begin{equation}\label{eqn_global_GHC_no_ac_interpolation_appnedix_B}
{\frak{z}}_{\f,c_0}^{(V)}(\kappa)= {{\frak{z}}_{\f(\kappa),c_0}^{(V)}}
\end{equation}
for any $\kappa\in B^\circ(r_0)_{\rm cl}$.
\end{theorem}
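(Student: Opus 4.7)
The plan is to adapt the proofs of Theorem~\ref{thm_main_GHCinterpolate}(iii) and Theorem~\ref{thm_main_GHC_in_families_with_ac}(i) to the setting of the wide open disc $V$. The key point is that restricting the anticyclotomic variable to $V$ replaces the growth-controlled coefficient ring $\cH_{v(\lambda)}(\widetilde\Gamma_\ac)$ by the more elementary ring $\LL_V[1/p]$ of bounded rigid analytic functions on $V$; this sidesteps the need for the hypothesis~\ref{item_Int_PR}, since the image $({\rm id}\otimes\iota_V)({\pmb\zeta}^\ac_{\f,c_0})$ automatically lands in a ring of bounded functions.

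Concretely, I would first form the Selmer complex
$$\widetilde{{\bf R}\Gamma}_{\rm f}\bigl(G_{K_{c_0},\Sigma},\,\TT_\f^\ac \otimes_{\LL(\widetilde\Gamma_\ac)}\LL_V[1/p];\,\Delta(\p,\p^c)\bigr),$$
whose defining exact triangle yields the long exact sequence
$$\widetilde{H}^1_V \lra H^1\bigl(G_{K_{c_0},\Sigma},\TT_\f^\ac\otimes_{\LL(\widetilde\Gamma_\ac)}\LL_V[1/p]\bigr) \xrightarrow{\res_\p} H^1\bigl(K_{c_0,\p},\TT_\f^\ac\otimes_{\LL(\widetilde\Gamma_\ac)}\LL_V[1/p]\bigr) \xrightarrow{\delta} \widetilde{H}^2_V.$$
The vanishing $\widetilde{H}^1_V = 0$ can be obtained by repeating verbatim the argument of Theorem~\ref{thm_main_GHC_in_families_with_ac}(i): for each pair $(\kappa,\phi)$ with $\kappa\in B^\circ(r_0)_{\rm cl}$ and $\phi\in\mathscr{S}_\kappa^{(c_0)}$ whose $\widehat\phi$ corresponds to a point of $V$, the combination of \cite{kobayashiGHC} and Mazur--Rubin duality yields the vanishing of $\widetilde{H}^1(G_{K_{c_0},\Sigma}, T_{\f(\kappa)}\otimes\widehat\phi;\Delta(\p,\p^c))$. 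Since $\widetilde{H}^1_V$ is finitely generated over the Noetherian ring $(\LL_U\widehat\otimes\LL_V)[1/p]$ and such pairs $(\kappa,\phi)$ are Zariski-dense in $B^\circ(r_0)\times V$, an intersection argument analogous to Lemma~\ref{lemma_amice_velu_in_families} forces $\widetilde{H}^1_V = 0$. This secures the uniqueness half of the theorem.

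To construct the class, it suffices to verify that $\delta({\pmb\zeta}^{(V)}_{\f,c_0}) = 0$. For every $\kappa\in B^\circ(r_0)_{\rm cl}$, Theorem~\ref{thm_kobayashi_ota_ac_interpolation} furnishes a global class ${\frak{z}}^\ac_{\f(\kappa),c_0}$ whose $\p$-restriction equals the $\kappa$-specialization of ${\pmb\zeta}^\ac_{\f,c_0}$. Applying $({\rm id}\otimes\iota_V)$ and invoking naturality of $\delta$ under $\pi_\kappa$ then gives $\pi_\kappa\circ\delta({\pmb\zeta}^{(V)}_{\f,c_0}) = 0$ for every such $\kappa$, so that
$$\delta\bigl({\pmb\zeta}^{(V)}_{\f,c_0}\bigr)\in \bigcap_{\kappa\in B^\circ(r_0)_{\rm cl}} P_\kappa\cdot \widetilde{H}^2_V = 0,$$
the vanishing of the intersection being a direct analog of Lemma~\ref{lemma_amice_velu_in_families} (in fact simpler, since both factors are now bounded and no order-$h$ distribution machinery is required). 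The interpolation identity \eqref{eqn_global_GHC_no_ac_interpolation_appnedix_B} follows from uniqueness at the level of $\f(\kappa)$: both $\pi_\kappa({\frak{z}}^{(V)}_{\f,c_0})$ and ${\frak{z}}^{(V)}_{\f(\kappa),c_0}$ are global classes whose $\p$-restrictions agree, and the specialized vanishing of $\widetilde{H}^1$ forces them to coincide.

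The principal obstacle is the two-variable intersection vanishing $\bigcap_\kappa P_\kappa\cdot M = 0$ for finitely generated $(\LL_U\widehat\otimes\LL_V)[1/p]$-modules $M$. Although conceptually this is just the Zariski density of classical points in $B^\circ(r_0)$, some care is needed in handling the Banach topology on the completed tensor product (one must verify that specializing in $\kappa$ and restricting to $V$ commute in the appropriate sense, and that $\widetilde{H}^\bullet_V$ is finitely generated over $(\LL_U\widehat\otimes\LL_V)[1/p]$), mirroring the technical content of Lemma~\ref{lemma_amice_velu_in_families} and Corollary~\ref{cor_lemma_amice_velu_in_families}.
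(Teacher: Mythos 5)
Your overall architecture (use the triangle $\widetilde H^1_V \to H^1(G_{K_{c_0},\Sigma},\cdot) \xrightarrow{\res_\p} H^1(K_{c_0,\p},\cdot) \xrightarrow{\delta} \widetilde H^2_V$, construct by showing $\delta({\pmb\zeta}^{(V)}) = 0$, prove uniqueness by showing $\widetilde H^1_V = 0$) matches the paper's strategy, and your construction half is essentially what the paper does. But your approach to the uniqueness half has a genuine gap and is \emph{not} what the paper does.

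You propose to prove $\widetilde H^1_V = 0$ by ``repeating verbatim the argument of Theorem~\ref{thm_main_GHC_in_families_with_ac}(i)'' specialized at pairs $(\kappa,\phi)$ with $\widehat\phi \in V$. This does not work as stated. The characters $\widehat\phi$ for $\phi \in \mathscr{S}_\kappa^{(c_0)}$ are twists of $\widehat\Psi_\kappa$ by finite-order ring class characters of $p$-power conductor, and the $p$-adic avatars of such finite-order characters accumulate at the boundary of $\mathcal{W}_\ac$, not at the interior point $\mathds{1}$ (the nontrivial torsion point nearest to $\mathds{1}$ is at distance $p^{-1/(p-1)}$). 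For $r$ small, the set of admissible pairs $(\kappa,\phi)$ with $\widehat\phi \in V$ can be extremely sparse — possibly supported only along a single one-parameter subfamily — and is in general \emph{not} Zariski dense in $B^\circ(r_0)\times V$. So your ``intersection argument analogous to Lemma~\ref{lemma_amice_velu_in_families}'' has nothing to intersect over. Moreover, even granting density, vanishing at a Zariski-dense set of specializations would only show that $\widetilde H^1_V$ is torsion over $\LL_U\widehat\otimes\LL_V$; to conclude vanishing one needs a torsion-freeness input (residual irreducibility), which you don't mention. There is also the separate subtlety that the Selmer complex over $\LL_V$ is obtained by a possibly non-flat base change from $\LL(\Gamma_\ac)$, so vanishing over $\LL(\Gamma_\ac)$ does not immediately transfer.

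The paper's proof avoids all three issues. It first proves $\widetilde H^1(G_{K_{c_0},\Sigma},{}^\circ\TT_\f^\ac;\Delta(\p,\p^c)) = 0$ over the \emph{full} Iwasawa algebra $\LL(\Gamma_\ac)$, where arithmetic specialization points are indeed dense — this follows from the proof of Theorem~\ref{thm_main_GHC_in_families_with_ac}(i), restricted to the trivial $\Delta_\ac$-isotypic component. It then represents the Selmer complex by a perfect two-term complex $[M \xrightarrow{u} M]$ of projective rank-one $\LL_U\widehat\otimes\LL(\Gamma_\ac)$-modules and uses the base change property of Selmer complexes (Ne\-ko\-v\'a\v{r}, Pottharst) to identify
$$\widetilde H^1\bigl(G_{K_{c_0},\Sigma},{}^\circ\TT_\f^\ac\otimes_{\LL(\Gamma_\ac)}\LL_V;\Delta(\p,\p^c)\bigr) \cong \mathrm{Tor}_1^{\LL(\Gamma_\ac)}\bigl({\rm coker}(u),\LL_V\bigr),$$
which is $\LL_V$-torsion; combined with torsion-freeness of $\widetilde H^1$ (a consequence of residual irreducibility, as in the proof of \cite[Proposition~2.1.5]{mr02}), this forces $\widetilde H^1_V = 0$. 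You need some version of this perfect-complex/Tor bootstrap — or at least the torsion-freeness step plus a flatness argument — to make the uniqueness half work.
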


\begin{proof}
Our argument in the proof of Theorem~\ref{thm_main_GHCinterpolate}(ii) with   $\cH_{v(\lambda)}^+(\widetilde{\Gamma}_\ac)_{\sW}$ replaced by $\LL_V$  reduces the proof of the existence of the global classes ${\frak{z}}^{(V)}_{\f,c_0}$ verifying 
\begin{equation}
\label{eqn_app_B1_15_06_2021_10_37}
    {\pmb\zeta}_{\f,c_0}^{(V)}=\res_\p({\frak{z}}_{\f,c_0}^{(V)})
\end{equation}
 to the vanishing of
$$\bigcap_{\kappa\in B^\circ(r_0)_{\rm cl}}\,\, P_\kappa\cdot \widetilde{H}^2(G_{K_{c_0},\Sigma},\TT_\f^\ac{\otimes}_{\LL(\widetilde{\Gamma}_\ac)} \,\LL_V;\Delta(\p,\p^c))\,.$$
This is clear, since $\bigcap_{\kappa\in B^\circ(r_0)_{\rm cl}}\,\, P_\kappa =\{0\}$ and the $\LL_U\widehat\otimes\LL_V$-module $\widetilde{H}^2(G_{K_{c_0},\Sigma},\TT_\f^\ac{\otimes}_{\LL(\widetilde{\Gamma}_\ac)} \,\LL_V;\Delta(\p,\p^c))$ is finitely generated.

In order to prove the uniqueness of ${\frak{z}}^{(V)}_{\f,c_0}$ given by the property \eqref{eqn_app_B1_15_06_2021_10_37}, we may first reduce this claim to the vanishing $\widetilde{H}^1(G_{K_{c_0},\Sigma},\TT_\f^\ac{\otimes}_{\LL(\widetilde{\Gamma}_\ac)} \,\LL_V;\Delta(\p,\p^c))=0$, following the proof of Theorem~\ref{thm_main_GHCinterpolate}(iii). Only in this proof, let us put ${}^\circ\TT_\f^\ac:=\TT_\f^\ac{\otimes}_{\LL({\widetilde\Gamma}_\ac)}\LL(\Gamma_\ac)$ and observe that ${}^\circ\TT_\f^\ac\,\otimes_{\LL(\widetilde{\Gamma}_\ac)}\,\LL_V=\TT_\f^\ac\,\otimes_{\LL({\Gamma}_\ac)}\LL_V$. It follows from the proof of Theorem~\ref{thm_main_GHC_in_families_with_ac}(i) (on passing to the $\mathds{1}_{\Delta}$-isotypic components for the analogous assertion concerning $\TT_\f^\ac$, where $\mathds{1}_{\Delta}$ is the trivial representation of $\Delta_\ac$) that  $\widetilde{H}^1(G_{K_{c_0},\Sigma},{}^\circ\TT_\f^\ac;\Delta(\p,\p^c))=0$. This vanishing result combined with the global Euler-Poincar\'e characteristic formulae and the fact that the complete local Noetherian ring $\LL_U\,\widehat\otimes\,\LL(\Gamma_\ac)$ is regular tell us that the Selmer complex $\widetilde{R\Gamma}(G_{K_{c_0},\Sigma},{}^\circ\TT_\f^\ac;\Delta(\p,\p^c))$ can be represented by a perfect complex of the form 
\begin{equation}
    \label{eqn_app_B_complex}
   [ M \xrightarrow{u} M ] 
\end{equation} 
concentrated in degrees $1$ and $2$, where $M$ is a projective $\LL_U\,\widehat\otimes\,\LL(\Gamma_\ac)$-module of rank one and $u$ is injective. By the base change property of Selmer complexes (c.f. \cite[\S8]{nekovar06} and \cite[Theorem 1.4]{jaycyclotmotives}), we have a quasi-isomorphism
$$[ M \xrightarrow{u} M ]\otimes_{\LL({\Gamma}_\ac)}^{\mathbf L}\LL_V=\widetilde{R\Gamma}(G_{K_{c_0},\Sigma},\TT_\f^\ac;\Delta(\p,\p^c))\otimes_{\LL({\Gamma}_\ac)}^{\mathbf L}\LL_V \xrightarrow{\sim} \widetilde{R\Gamma}(G_{K_{c_0},\Sigma},\TT_\f^\ac{\otimes}_{\LL({\Gamma}_\ac)} \,\LL_V;\Delta(\p,\p^c)).$$
This fact combined with the exactness of the sequence
$$0\lra {\rm Tor}_1^{\LL(\Gamma_\ac)}({\rm coker}(u),\LL_V)\xrightarrow{u_0} M\otimes_{\LL({\Gamma}_\ac)}\LL_V \xrightarrow{u\otimes_{\LL({\Gamma}_\ac)}\LL_V} M \otimes_{\LL({\Gamma}_\ac)}\LL_V\lra {\rm coker}(u)\otimes_{\LL({\Gamma}_\ac)}\LL_V\lra 0$$
(where the injectivity of $u_0$ follows from the fact that ${\rm Tor}_1^{\LL(\Gamma_\ac)}(M,\LL_V)=0$, as $M$ is a flat $\LL(\Gamma_\ac)$-module), we infer that
 $$\widetilde{H}^1(G_{K_{c_0},\Sigma},{}^\circ\TT_\f^\ac{\otimes}_{\LL({\Gamma}_\ac)} \,\LL_V;\Delta(\p,\p^c))=\ker\left( u\otimes_{\LL({\Gamma}_\ac)} \LL_V\right)={\rm im}(u_0)\cong {\rm Tor}_1^{\LL(\Gamma_\ac)}({\rm coker}(u),\LL_V)\,.$$ 
Since ${\rm Tor}_1^{\LL(\Gamma_\ac)}({\rm coker}(u),\LL_V)$ is torsion as a $\LL(\Gamma_\ac)$-module, it is also torsion as a $\LL_V$-module (as the $\LL_V$-module structure on ${\rm Tor}_1^{\LL(\Gamma_\ac)}({\rm coker}(u),\LL_V)$ is compatible with its $\LL(\Gamma_\ac)$-module structure, via the morphism $\iota_V$). This shows that the $\LL_U\,\widehat\otimes\,\LL_V$-module $\widetilde{H}^1(G_{K_{c_0},\Sigma},{}^\circ\TT_\f^\ac{\otimes}_{\LL({\Gamma}_\ac)} \,\LL_V;\Delta(\p,\p^c))$ is torsion. On the other hand, since the $G_\QQ$-representation $\TT_\f^\ac$ is residually irreducible, it follows that (c.f. \cite{mr02}, the proof of Proposition 2.1.5)  $\widetilde{H}^1(G_{K_{c_0},\Sigma},{}^\circ\TT_\f^\ac{\otimes}_{\LL({\Gamma}_\ac)} \,\LL_V;\Delta(\p,\p^c))$ is torsion-free. This concludes the proof that $$\widetilde{H}^1(G_{K_{c_0},\Sigma},\TT_\f^\ac{\otimes}_{\LL({\Gamma}_\ac)} \,\LL_V;\Delta(\p,\p^c))=\widetilde{H}^1(G_{K_{c_0},\Sigma},{}^\circ\TT_\f^\ac{\otimes}_{\LL({\Gamma}_\ac)} \,\LL_V;\Delta(\p,\p^c))=0\,,$$ 
as required.

Our argument in the proof of Theorem~\ref{thm_main_GHCinterpolate}(iii), in light of our observations concerning the Selmer complex $\widetilde{R\Gamma}(G_{K_{c_0},\Sigma},\TT_\f^\ac{\otimes}_{\LL(\widetilde{\Gamma}_\ac)} \,\LL_V;\Delta(\p,\p^c))$, applies to verify the interpolation property \eqref{eqn_global_GHC_no_ac_interpolation_appnedix_B}.
\end{proof}

\bibliographystyle{amsalpha}
\bibliography{references}

\end{document}